\setlist[1]{itemsep=5pt}
\newcommand{\comment}[1]{}
      \def\@setcopyright{}
      \def\serieslogo@{}
\newcommand{\Complex}{\mathbb C}
\newcommand{\ddbar}{\overline\partial}
\newcommand{\norm}[1]{\left\Vert#1\right\Vert}
\newcommand{\abs}[1]{\left\vert#1\right\vert}
\newcommand{\set}[1]{\left\{#1\right\}}
\newcommand{\To}{\rightarrow}
\newtheorem{theorem}{Theorem}[section]
\newtheorem{lemma}[theorem]{Lemma}
\newtheorem{corollary}[theorem]{Corollary}
\newtheorem{proposition}[theorem]{Proposition}
\newtheorem{question}[theorem]{Question}
\newtheorem{definition}[theorem]{Definition}
\newtheorem{remark}[theorem]{Remark}
\numberwithin{equation}{section}
\begin{document}
\title[]{Morse inequalities for Fourier components of Kohn-Rossi cohomology of
CR manifolds with $S^1$-action}
\author[]{Chin-Yu Hsiao}
\address{Institute of Mathematics, Academia Sinica, 6F, Astronomy-Mathematics Building,
No.1, Sec.4, Roosevelt Road, Taipei 10617, Taiwan}
\thanks{The first-named author was partially supported by Taiwan Ministry of Science of Technology project
103-2115-M-001-001 and the Golden-Jade fellowship of Kenda Foundation. }
\email{chsiao@math.sinica.edu.tw or chinyu.hsiao@gmail.com}
\author[]{Xiaoshan Li}
\address{School of Mathematics
and Statistics, Wuhan University, Hubei 430072, China \& Institute of Mathematics, Academia Sinica, 6F, Astronomy-Mathematics Building,
No.1, Sec.4, Roosevelt Road, Taipei 10617, Taiwan}
\thanks{The second-named author was  supported by Central university research Fund 2042015kf0049, Postdoctoral Science Foundation of China 2015M570660 and NSFC No. 11501422}
\email{xiaoshanli@whu.edu.cn or xiaoshanli@math.sinica.edu.tw}

\setlength{\headheight}{14pt}
\pagestyle{fancy}
\lhead{\itshape{Chin-Yu Hsiao \& Xiaoshan Li}}
\rhead{\itshape{Morse inequalities on CR manifolds with $S^1$ action }}
\cfoot{\thepage}

\begin{abstract}
Let $X$ be a compact connected  CR manifold of dimension $2n-1, n\geq 2$ with a transversal CR $S^1$-action on $X$. We study the Fourier components of the Kohn-Rossi cohomology with respect to the $S^1$-action. By studying the Szeg\"o kernel of the Fourier components we establish the Morse inequalities on $X$. Using the Morse inequalities we have established on $X$ we prove that there are abundant CR functions on $X$ when $X$ is weakly pseudoconvex and strongly pseudoconvex at a point.
\end{abstract}

\maketitle \tableofcontents
\section{Introduction}
The problem of embedding CR manifolds is prominent in areas such as complex analysis, partial differential equations and differential geometry. Let $X$ be a compact CR manifold of dimension $2n-1$, $n\geq 2$. When $X$ is strongly pseudoconvex and dimension of $X$ is greater than or equal to five, a classical theorem of
L. Boutet de Monvel~\cite{BdM1:74b} asserts that $X$ can be globally CR embedded into $\Complex^N$, for some $N\in\mathbb N$.
For a compact strongly pseudoconvex CR  manifold of dimension greater than or equal to five, the dimension of the kernel of the tangential Cauchy-Riemmann operator $\ddbar_b$ is infinite and we can find many CR functions to embed $X$ into complex space.
The classical example of non-embeddable three dimensional strongly
pseudoconvex CR manifold appears implicitly in the non-fillable example of
pseudoconcave manifold by Grauert \cite{G94}, Andreotti-Siu \cite{AS70} and Rossi \cite{Ro65} and was explicited by Burns \cite{B79}.
In \cite{Le92} it is shown  that a compact strongly pseudoconvex three dimensional CR manifold which admits an inner $S^1$-action is the boundary of a compact strongly pseudoconvex surface. By Kohn's result \cite{K86}, this implies that it is embeddedable in $\mathbb C^N$ for some $N$ (see~\cite{HM14I} for another proof). Bland obtained in \cite{Bl91} that for  a CR manifold which admits a free transversal $S^1$-action will be embedded into complex space if the CR structure admits a normal form relative to this $S^1$-action which has no
negative Fourier coefficients. Epstein ~\cite[Theorem\,A16]{Ep92} proved that  a three dimensional compact strongly pseudoconvex CR manifold $X$ with a global free transversal CR $S^1$ action can be embedded into $\mathbb C^N$ by the positive Fourier components of CR functions. Since the action is globally free, Epstein considered the quotient of the CR manifold by the $S^1$-action. The action which is CR and transversal implies that the quotient $X/S^1$ is a compact Riemann surface with a positive holomorphic line bundle. Then  $X$ is CR isomorphism to the circle bundle with respect to the dual bundle of the positive line bundle. Using Kodaira's embedding theorem, Epstein got the embedding theorem of the CR manifold by the space of positive  Fourier components of CR functions.

Motivated by Epstein's work, we will consider a compact CR manifold $X$ of dimension ${\rm dim} X=2n-1, n\geq 2$ with a transversal CR $S^1$-action and study the  Fourier components of Kohn-Rossi cohomology of $\overline\partial_b$-complex on $X$. The transversal CR $S^1$-action need not to be globally free but locally free. We use $T$ to denote the global vector field induced by the $S^1$-action. For $m\in\mathbb Z$ and $m>0$, we use $H^0_{b, m}(X)=\{u\in C^\infty(X): \overline\partial_b u=0, T u=imu\}$ to denote the $m$-th positive Fourier component of CR functions (see \cite{Ep92}). The embeddability of $X$ by positive Fourier components of CR functions is related to the behavior of the $S^1$ action on $X$. For example, if one can find $f_1\in H^0_{b,m}(X),\ldots,f_{d_m}\in H^0_{b,m}(X)$ and $g_1\in H^0_{b,m_1}(X),\ldots,g_{h_{m_1}}\in H^0_{b,m_1}(X)$ such that the map
\[\Phi_{m, m_1}:x\in X\to (f_1(x),\ldots,f_{d_m}(x),g_1(x),\ldots,g_{h_{m_1}}(x))\in\Complex^{d_m+h_{m_1}}\]
is a CR embedding. The $S^1$-action on $X$ induces naturally a $S^1$-action on $\Phi_{m, m_1}(X)$ and this $S^1$-action on $\Phi_{m, m_1}(X)$ is simply given by the following:
\[e^{i\theta}\circ(z_1,\ldots,z_{d_m},z_{d_m+1},\ldots,z_{d_m+h_{m_1}})=(e^{im\theta}z_1,\ldots,e^{im\theta}z_{d_m},e^{im_1\theta}z_{d_m+1},\ldots,e^{im_1\theta}z_{d_m+h_{m_1}}).\]
Thus, if one can embed such CR manifold by positive Fourier components of CR functions, we can describe the $S^1$-action explicitly. To study the embedding problem of such CR manifold by positive Fourier components of CR functions, it is crucial to be able to know
\begin{question}\label{j4}
When ${\rm dim}H^0_{b, m}(X)\approx m^{n-1}$ for $m$ large?
\end{question}
Inspired by Demailly's holomorphic Morse inequalities on complex manifolds \cite{D85}, \cite{D91}, \cite{MM07} and the recent works of the first-named author and Marinescu in \cite{HM12}, Hsiao \cite{H15}, \cite{H14} and Hsiao-Li \cite{HL15} on the Morse inequalities and Grauert-Riemenschneider criterion on CR manifolds, we obtain the Morse inequalities for the Fourier components of Kohn-Rossi cohomology of $\overline\partial_b$-complex. See Theorem~\ref{a4} and Theorem~\ref{a2} for the main results.

By the Morse inequalities we have obtained, we will show that a compact weakly pseudoconvex CR manifolds which admit a transversal CR locally free $S^1$-action will have abundant CR functions if it has a point where the Levi-form is strongly pseudoconvex (see Theorem~\ref{a5} for the details). This gives an answer of Question \ref{j4}.

\subsection{Set up and terminology}

Let $(X, T^{1,0}X)$ be a compact connected CR manifold of dimension $2n-1, n\geq 2$, where $T^{1,0}X$ is the given CR structure on $X$. That is, $T^{1,0}X$ is a subbundle of the complexified tangent bundle $\mathbb{C}TX$ of rank $n-1$ , satisfying $T^{1,0}X\cap T^{0,1}X=\{0\}$, where $T^{0,1}X=\overline{T^{1,0}X}$, and $[\mathcal V,\mathcal V]\subset\mathcal V$, where $\mathcal V=C^\infty(X, T^{1,0}X)$.

We assume that $X$ admits a $S^1$-action: $S^1\times X\rightarrow X, (e^{i\theta}, x)\rightarrow e^{i\theta}\circ x$. Here, we use $e^{i\theta}~(0\leq\theta<2\pi)$ to denote the $S^1$-action. Set $X_{\rm reg}=\{x\in X: \forall e^{i\theta}\in S^1, ~\text{if}~e^{i\theta}\circ x=x,~ \text{then}~e^{i\theta}=\rm id\}$. We call $x\in X_{{\rm reg}}$ a regular point of the $S^1$-action and complements of $X_{\rm reg}$ exceptional points. For every $k\in\mathbb N$, put
\begin{equation}\label{e-gue150614}
X_k:=\set{x\in X: e^{i\theta}\circ x\neq x, \forall\theta\in(0,\frac{2\pi}{k}), e^{i\frac{2\pi}{k}}\circ x=x}.\end{equation}
Thus, $X_{\rm reg}=X_1$. In this paper, we always assume that $X_{\rm reg}\neq\emptyset.$ By the Orbit type stratification (see Theorem 1.30 in \cite{Me03}), there are only finite $X_k's$ denoted by $X_1, X_{k_1}, \cdots, X_{k_p}$ which are not empty subset of $X$ such that $X=X_1\cup X_{k_1}\cup\cdots\cup X_{k_p}.$

Let $T\in C^\infty(X, TX)$ be the global real vector field induced by the $S^1$-action given as follows
\begin{equation}
(T u)(x)=\frac{\partial}{\partial\theta}\left(u(e^{i\theta}\circ x)\right)\Big|_{\theta=0}, u\in C^\infty(X).
\end{equation}
\begin{definition}\label{f1}
We say that the $S^1$-action $e^{i\theta} ~(0\leq\theta<2\pi$) is CR if
\begin{equation}\label{x1}
[T, C^\infty(X, T^{1,0}X)]\subset C^\infty(X, T^{1,0}X),
\end{equation}
where $[, ]$ is the Lie bracket between the smooth vector fields on $X$.
Furthermore, we say that the $S^1$-action is transversal if for each $x\in X$,
\begin{equation}\label{x2}
\mathbb CT(x)\oplus T_x^{1,0}(X)\oplus T_x^{0,1}X=\mathbb CT_xX.
\end{equation}
\end{definition}

\begin{remark}
The $S^1$-action on $X$ is said to be a locally free group action if $T(x)\neq0$ for every $x\in X$.
By (\ref{x2}), $T(x)$ will not vanish at any point $x\in X$, thus
the transversal CR $S^1$-action defined in Definition\ref{f1} is a locally free group action. For the knowledge of group action, we refer readers to \cite{Me03}, \cite{FR00}. The classical example of compact CR manifolds with transversal CR $S^1$-action is the circle bundle with respect to a Hermitian line bundle over a compact complex manifold. However, there are many examples of compact CR manifolds with transversal CR $S^1$-action which are not circle bundle.

For example, let $X=\{(z_1, z_2)\in\mathbb C^2: |z_1|^2+|z_1^2+z_2|^2+|z_2|^2=1\}$ which is a compact CR manifold with the following transversal CR $S^1$-action $$X\times S^1\rightarrow X, (z_1, z_2)\rightarrow(e^{i\theta}z_1, e^{2i\theta}z_2).$$ The $S^1$-action defined above is locally free and free on a dense, open, connected open subset $\{(z_1, z_2)\in X: z_1\neq 0\}$.
\end{remark}

In general, we have the following

\begin{lemma}\label{g2}
Let $X$ be a compact connected CR manifold with transversal CR locally free $S^1$-action.  Then $X_{\rm reg}$ is an open, dense subset of $X$. Moreover, the measure of $X\setminus X_{\rm reg}$ is zero.
\end{lemma}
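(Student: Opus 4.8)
The plan is to read off everything from the orbit type stratification $X=X_1\cup X_{k_1}\cup\cdots\cup X_{k_p}$ (finitely many nonempty pieces, $k_i\geq 2$, and $X_1=X_{\rm reg}$), together with the fact that $T$ vanishes nowhere, which holds by the transversality assumption. Since the action is locally free, the stabilizer $\Gamma_x:=\{e^{i\theta}\in S^1:e^{i\theta}\circ x=x\}$ of each $x\in X$ is a discrete, hence finite cyclic, subgroup of $S^1$, and $X_{\rm reg}=X_1=\{x\in X:\Gamma_x=\{{\rm id}\}\}$. I will prove (a) that $X_1$ is open, and (b) that $X\setminus X_1$ has measure zero with respect to any smooth volume density on $X$ (all such being mutually absolutely continuous); density of $X_1$ is then automatic, since a measure-zero set has empty interior.

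For openness, note that $e^{i\theta}\circ x=\Phi_\theta(x)$, where $\Phi$ is the flow of $T$, because $T$ is the infinitesimal generator of the action. Fix $x_0\in X_1$. Since $T(x_0)\neq 0$, the flow-box (straightening) theorem provides a neighborhood $W$ of $x_0$ and a number $\delta>0$ such that, in the flow-box coordinates, $\Phi_s$ is a translation; in particular $\Phi_s(y)\neq y$ whenever $y\in W$ and $0<|s|<\delta$. Suppose $x_0$ were not interior to $X_1$: pick $x_j\to x_0$ with $x_j\notin X_1$ and $\theta_j\in(0,2\pi)$ with $e^{i\theta_j}\circ x_j=x_j$, and pass to a subsequence with $\theta_j\to\theta_\infty\in[0,2\pi]$. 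If $\theta_\infty\in(0,2\pi)$, continuity of the action forces $e^{i\theta_\infty}\circ x_0=x_0$, contradicting $x_0\in X_1$. If $\theta_\infty\in\{0,2\pi\}$, then replacing $\theta_j$ by $2\pi-\theta_j$ in the case $\theta_\infty=2\pi$ (which still stabilizes $x_j$, since $e^{i2\pi}={\rm id}$), we get $x_j\in W$ and $0<\theta_j<\delta$ for $j$ large, again a contradiction. Hence $X_1$ is open.

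For the measure statement it suffices, since $X\setminus X_1=\bigcup_{i=1}^p X_{k_i}$ and $X_{k_i}\subset{\rm Fix}(g_i)$ with $g_i:=e^{i2\pi/k_i}$, to show each ${\rm Fix}(g_i)$ has measure zero. Averaging a Riemannian metric over the finite cyclic group generated by $g_i$ makes $g_i$ an isometry, so ${\rm Fix}(g_i)$ is a closed set that is a disjoint union of embedded (totally geodesic) submanifolds of $X$; it therefore has measure zero unless one of its components $F_0$ has dimension $\dim X$, hence is open in $X$. But the components of the submanifold ${\rm Fix}(g_i)$ are clopen in ${\rm Fix}(g_i)$, which is closed in $X$, so such an $F_0$ would be clopen and nonempty in the connected manifold $X$, forcing $F_0=X$ and thus $g_i={\rm id}$ on $X$. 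Since $g_i\neq{\rm id}$ in $S^1$, this would make $X_{\rm reg}=\emptyset$, contrary to our standing hypothesis. Hence each ${\rm Fix}(g_i)$, and so $X\setminus X_1$, has measure zero; in particular $X\setminus X_1$ (which is closed, $X_1$ being open) has empty interior, so $X_1$ is dense.

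The one genuinely non-elementary input is the regularity of the fixed-point set of the finite-order diffeomorphism $g_i$ — namely that it is a finite disjoint union of embedded submanifolds, so that positive measure forces a full-dimensional, hence open, component; this is exactly what the metric-averaging (equivalently, Bochner linearization) argument supplies. The remaining ingredients — the flow-box argument for openness and the clopen/connectedness argument for fullness — are routine.
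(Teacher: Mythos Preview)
Your proof is correct and follows essentially the same strategy as the paper's: reduce the measure-zero statement to showing that each fixed-point set ${\rm Fix}(e^{i2\pi/k_i})$ is a proper submanifold (via an invariant Riemannian metric making the map an isometry), and then use connectedness of $X$ together with the standing hypothesis $X_{\rm reg}\neq\emptyset$ to rule out full-dimensional components. There are two minor differences in execution. For openness, the paper argues more directly that $X\setminus X_{\rm reg}=\bigcup_i X_{k_i}$ is closed: if $x_j\in X_{k_i}$ and $x_j\to x_0$, then $e^{i2\pi/k_i}\circ x_0=x_0$, so $x_0\notin X_{\rm reg}$; since only finitely many $k_i$ occur, no flow-box argument is needed to exclude accumulation of periods at $0$ or $2\pi$. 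For the submanifold structure of the fixed set, the paper unpacks the isometry fixed-point theorem locally via the exponential map, and it uses the $T$-rigid Hermitian metric already fixed in the paper (for which every $e^{i\theta}$ is an isometry) rather than averaging over the finite cyclic subgroup. Your choice to derive density from the measure-zero statement (empty interior of a null closed set) is a clean alternative to the paper's separate nowhere-dense argument.
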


The proof of Lemma~\ref{g2} is a direct corollary of Proposition 1.24 in  \cite{Me03} and similar results can be found in \cite{Du82}.  For the convenience of readers, we will prove Lemma~\ref{g2} in the appendix.

We assume throughout that $(X, T^{1,0}X)$ is a compact connected CR manifold with a transversal CR locally free $S^1$-action and we denote by $T$ the global vector field induced by the $S^1$-action. Let $\omega_0\in C^\infty(X,T^*X)$ be the global real $1$-form determined by $\langle\,\omega_0\,,\,u\,\rangle=0$, for every $u\in T^{1,0}X\oplus T^{0,1}X$ and $\langle\,\omega_0\,,\,T\,\rangle=-1$.

\begin{definition}\label{d-1.2}
For $x\in X$, the Levi-form $\mathcal L_x$ associated with the CR structure is the Hermitian quadratic form on $T_x^{1,0}X$ defined as follows. For any $U, V\in T_x^{1,0}X$, pick $\mathcal U, \mathcal V\in C^\infty(X, T^{1,0}X)$ such that $\mathcal U(x)=U, \mathcal V(x)=V$. Set
\begin{equation}
\mathcal L_x(U, \overline V)=\frac{1}{2i}\langle[\mathcal U, \overline{\mathcal V}](x), \omega_0(x)\rangle
\end{equation}
where $[,]$ denotes the Lie bracket between smooth vector fields. Note that $\mathcal L_x$ does not depend on the choice of $\mathcal U$ and $\mathcal V$.
\end{definition}
\begin{definition}
The CR structure on $X$ is called pseudoconvex at $x\in X$ if $\mathcal L_x$ is positive semidefinite. It is called strongly pseudoconvex at $x$ if $\mathcal L_x$ is positive definite. If the CR structure is (strongly) pseudoconvex at every point of $X$, then $X$ is called a (strongly) pseudoconvex CR manifold.
\end{definition}

Denote by $T^{\ast 1,0}X$ and $T^{\ast0,1}X$ the dual bundles of
$T^{1,0}X$ and $T^{0,1}X$, respectively. Define the vector bundle of $(0,q)$-forms by
$T^{\ast 0,q}X=\Lambda^qT^{\ast0,1}X$. Let $D\subset X$ be an open subset. Let $\Omega^{0,q}(D)$
denote the space of smooth sections of $T^{\ast0,q}X$ over $D$ and let $\Omega_0^{0,q}(D)$
be the subspace of $\Omega^{0,q}(D)$ whose elements have compact support in $D$.

Fix $\theta_0\in [0, 2\pi)$. Let
$$d e^{i\theta_0}: \mathbb CT_x X\rightarrow \mathbb CT_{e^{i\theta_0}x}X$$
denote the differential map of $e^{i\theta_0}: X\rightarrow X$. By the property of transversal CR $S^1$ action, we can check that
\begin{equation}\label{a}
\begin{split}
de^{i\theta_0}:T_x^{1,0}X\rightarrow T^{1,0}_{e^{i\theta_0}\circ x}X,\\
de^{i\theta_0}:T_x^{0,1}X\rightarrow T^{0,1}_{e^{i\theta_0}\circ x}X,\\
de^{i\theta_0}(T(x))=T(e^{i\theta_0}\circ x).
\end{split}
\end{equation}
Let $(de^{i\theta_0})^\ast: \Lambda^q(\mathbb CT^\ast X)\rightarrow\Lambda^q(\mathbb CT^\ast X)$ be the pull back of $de^{i\theta_0}, q=0,1\cdots, n-1$. From (\ref{a}), we can check that for every $q=0, 1,\cdots, n-1$
\begin{equation}\label{j1}
(de^{i\theta_0})^\ast: T^{\ast0,q}_{e^{i\theta_0}\circ x}X\rightarrow T_x^{\ast0,q}X.
\end{equation}

Let $u\in\Omega^{0,q}(X)$. Define $Tu$ as follows. For any $X_1,\cdots, X_q\in T_x^{1,0}X$,
\begin{equation}\label{b}
Tu(X_1,\cdots, X_q):=\frac{\partial}{\partial\theta}\left((de^{i\theta})^\ast u(X_1,\cdots, X_q)\right)\Big|_{\theta=0}.
\end{equation}
From (\ref{j1}) and (\ref{b}), we have that $Tu\in\Omega^{0, q}(X)$ for all $u\in\Omega^{0, q}(X)$. See the discussion before Lemma~\ref{b5} for another way to define $Tu$.

Let $\overline\partial_b:\Omega^{0,q}(X)\rightarrow\Omega^{0,q+1}(X)$ be the tangential Cauchy-Riemann operator. It is straightforward from (\ref{a}) and (\ref{b}) to see that
\begin{equation}\label{c}
T\overline\partial_b=\overline\partial_bT~\text{on}~\Omega^{0,q}(X)
\end{equation}
(see also \eqref{e-gue150620}).
For every $m\in\mathbb Z$, put $\Omega^{0,q}_m(X):=\{u\in\Omega^{0,q}(X): Tu=imu\}$.
From (\ref{c}) we have the $\ddbar_b$-complex for every $m\in\mathbb Z$:
\begin{equation}\label{e-gue140903VI}
\ddbar_b:\cdots\To\Omega^{0,q-1}_m(X)\To\Omega^{0,q}_m(X)\To\Omega^{0,q+1}_m(X)\To\cdots.
\end{equation}
For every $m\in\mathbb Z$, the $q$-th $\ddbar_b$ cohomology (or Kohn-Rossi cohomology) is given by
\begin{equation}\label{a8}
H^{q}_{b,m}(X):=\frac{{\rm Ker\,}\ddbar_{b}:\Omega^{0,q}_m(X)\To\Omega^{0,q+1}_m(X)}{\operatorname{Im}\ddbar_{b}:\Omega^{0,q-1}_m(X)\To\Omega^{0,q}_m(X)}.
\end{equation}
The starting point of this paper is that without any Levi curvature assumption, for every $m\in\mathbb Z$ and every
$q=0,1,2,\ldots,n-1$ we have
\begin{equation}\label{a9I}
{\rm dim\,}H^{q}_{b,m}(X)<\infty.
\end{equation}

\begin{definition}
We say that a function $u\in C^\infty(X)$ is a Cauchy-Riemann (CR for short) function
if $\overline\partial_bu=0$ or in the other word, $\overline Zu=0$ for all $Z\in C^{\infty}(X, T^{1, 0}X)$.
\end{definition}

For $m\in\mathbb Z$, when $q=0$, $H^0_{b, m}(X)$ is the space of CR functions which lie in the eigenspace of $T$ and we call $H^0_{b, m}(X)$ the $m$-th Fourier component of CR functions.

\subsection{Hermitian CR geometry}

We need

\begin{definition}
Let $D$ be an open set and let $V\in C^\infty(D, \mathbb CTX)$ be a vector field on $D$. We say that $V$ is $T$-rigid if
\begin{equation}
de^{i\theta_0}(V(x))=V(e^{i\theta_0}\circ x)
\end{equation}
for any $x, \theta_0\in[0,2\pi)$ satisfying $x\in D, e^{i\theta_0}\circ x\in D.$
\end{definition}

\begin{definition}
Let $\langle\cdot|\cdot\rangle$ be a Hermitian metric on $\mathbb CTX$.
We say that $\langle\cdot|\cdot\rangle$ is $T$-rigid if for $T$-rigid vector fields $V, W$ on $D$, where $D$ is any open set, we have
\begin{equation}
\langle V(x)|W(x)\rangle=\langle (de^{i\theta_0}V)(e^{i\theta_0}\circ x)|(de^{i\theta_0}W)(e^{i\theta_0}\circ x)\rangle, \forall x\in D, \theta_0\in[0,2\pi).
\end{equation}
\end{definition}

\begin{lemma}[Theorem 9.2 in \cite{H14}]\label{a7}
Let $X$ be a compact connected CR manifold with a transversal CR $S^1$-action. There is always a $T$-rigid Hermitian metric $\langle\cdot|\cdot\rangle$ on $\mathbb CTX$ such that $T^{1,0}X\bot T^{0,1}X, T\bot(T^{1,0}X\oplus T^{0,1}X), \langle T|T\rangle=1$ and $\langle u|v\rangle$ is real if $u, v$ are real tangent vectors.
\end{lemma}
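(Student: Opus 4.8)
The plan is to build the metric in two steps: first, by a pointwise prescription, to produce a smooth Hermitian metric on $\mathbb{C}TX$ satisfying all of the orthogonality and normalisation conditions in the statement, and then to make it $T$-rigid by averaging over the $S^1$-action, observing that the averaging destroys none of the pointwise conditions.

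First I would fix an arbitrary smooth Hermitian metric $g$ on the CR bundle $T^{1,0}X$ and transport it to $T^{0,1}X$ by complex conjugation, setting $h(\overline u,\overline v):=\overline{g(u,v)}$ for $u,v\in T^{1,0}X$. The transversality condition \eqref{x2} gives, in each fibre, the direct sum $\mathbb{C}T_xX=\mathbb{C}T(x)\oplus T^{1,0}_xX\oplus T^{0,1}_xX$; I would declare these three summands mutually orthogonal and set $h(T,T):=1$. This defines a smooth positive-definite Hermitian metric $h$ on $\mathbb{C}TX$. Since $\overline T=T$ and conjugation interchanges $T^{1,0}X$ with $T^{0,1}X$ while $g$ is Hermitian, a short computation on the three summands gives $\overline{h(u,v)}=h(\overline u,\overline v)$ for all $u,v$; in particular $h(u,v)$ is real whenever $u,v$ are real tangent vectors. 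Thus $h$ already satisfies every requirement of the lemma except, possibly, $T$-rigidity.

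Next I would pass to the averaged metric
\begin{equation*}
\langle u\,|\,v\rangle_x:=\frac{1}{2\pi}\int_0^{2\pi} h_{e^{i\theta}\circ x}\bigl(de^{i\theta}(u),\,de^{i\theta}(v)\bigr)\,d\theta,\qquad u,v\in\mathbb{C}T_xX .
\end{equation*}
Smoothness of the $S^1$-action makes this a smooth Hermitian metric, and it is positive definite because every $de^{i\theta}$ is invertible. By \eqref{a}, $de^{i\theta}$ maps $T^{1,0}_xX$, $T^{0,1}_xX$ and $T(x)$ into $T^{1,0}_{e^{i\theta}\circ x}X$, $T^{0,1}_{e^{i\theta}\circ x}X$ and to $T(e^{i\theta}\circ x)$, respectively; since $h$ has the prescribed orthogonality at every point and $h(T,T)\equiv1$, each integrand has the correct orthogonality and the normalisation $h(T,T)=1$, and these properties persist after integration. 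Because $de^{i\theta}$ is the differential of a real diffeomorphism it commutes with complex conjugation, so $\langle\cdot\,|\,\cdot\rangle$ inherits reality from $h$. Finally, using the chain rule $de^{i\theta}\circ de^{i\theta_0}=de^{i(\theta+\theta_0)}$, the substitution $\psi=\theta+\theta_0$ and the $2\pi$-periodicity of $\psi\mapsto e^{i\psi}$, one checks that $\langle de^{i\theta_0}(u)\,|\,de^{i\theta_0}(v)\rangle_{e^{i\theta_0}\circ x}=\langle u\,|\,v\rangle_x$ for all $x$, all $\theta_0\in[0,2\pi)$ and all $u,v\in\mathbb{C}T_xX$; since for a $T$-rigid vector field $V$ on an open set one has $(de^{i\theta_0}V)(e^{i\theta_0}\circ x)=de^{i\theta_0}(V(x))$, this is exactly the $T$-rigidity of $\langle\cdot\,|\,\cdot\rangle$.

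I expect the only point requiring genuine care to be this last verification, where one must be attentive to the order in which the differentials $de^{i\theta}$ and $de^{i\theta_0}$ are composed and must use the $2\pi$-periodicity of the action so that the change of variable leaves the integral unchanged; phrasing the computation through the averaging integral makes it unnecessary to know that the full orbit of $x$ lies in the domain on which $V$ is $T$-rigid. By contrast, the stability of the pointwise conditions under averaging is immediate from \eqref{a}. The fact that the $S^1$-action is only locally free — so that at an exceptional point $de^{2\pi i/k}$ may be a nontrivial automorphism of $\mathbb{C}T_xX$ — causes no trouble, because the first step is purely pointwise and the averaging integral is insensitive to fixed points of the action.
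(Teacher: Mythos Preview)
The paper does not supply its own proof of this lemma; it is stated as a citation of Theorem~9.2 in~\cite{H14} and used without argument. Your proposal is correct and is the standard construction: start from any Hermitian metric on $T^{1,0}X$, extend to $\mathbb{C}TX$ by conjugation and by declaring the three summands of the transversal decomposition orthogonal with $h(T,T)=1$, and then average over $S^1$ to obtain invariance. Your checks that averaging preserves the orthogonality, the normalisation $\langle T|T\rangle=1$, and reality on real vectors are all sound, since by \eqref{a} each $de^{i\theta}$ respects the splitting and sends $T$ to $T$, and since the differential of a real diffeomorphism commutes with complex conjugation. The one point you flag as delicate, the substitution $\psi=\theta+\theta_0$ and use of $2\pi$-periodicity to obtain $S^1$-invariance of the averaged metric, is indeed the only place needing care, and you have it right; the resulting full $S^1$-invariance is strictly stronger than (hence implies) the $T$-rigidity condition as defined in the paper.
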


From now on, we fix a $T$-rigid Hermitian metric $\langle\cdot|\cdot\rangle$ on $\mathbb CTX$ satisfying all the properties in Lemma~\ref{a7}. The Hermitian metric $\langle\cdot|\cdot\rangle$ on
$\mathbb CTX$ induces by duality a Hermitian metric on $\mathbb CT^\ast X$ and also on the bundles of $(0,q)$-forms $T^{\ast 0,q}X, q=0,1\cdots,n-1.$ We shall also denote all these induced
metrics by $\langle\cdot|\cdot\rangle$. For every $v\in T^{\ast0,q}X$, we write
$|v|^2:=\langle v|v\rangle$. We have the pointwise orthogonal decompositions:
\begin{equation}
\begin{split}
&\mathbb CT^{\ast}X=T^{\ast1,0}X\oplus T^{\ast0,1}X\oplus\{\lambda\omega_0:\lambda\in\mathbb C\},\\
&\mathbb CTX=T^{1,0}X\oplus T^{0,1}X\oplus\{\lambda T:\lambda\in\mathbb C\}.
\end{split}
\end{equation}

For any $p\in X$, locally there is an orthonormal frame $\{U_1, \ldots, U_{n-1}\}$ of $T^{1, 0}X$ with respect to the given $T$-rigid Hermitian metric $\langle\cdot|\cdot\rangle$ such that the Levi-form $\mathcal L_p$ is diagonal in this frame, $\mathcal L_p(U_i, \overline{U_j})=\lambda_j\delta_{ij}$, where $\delta_{ij}=1$ if $i=j$, $\delta_{ij}=0$ if $i\neq j$. The entries $\{\lambda_1, \ldots, \lambda_{n-1}\}$ are called the eigenvalues of Levi-form at $p$ with respect to the $T$-rigid Hermitian metric $\langle\cdot|\cdot\rangle$. Moreover, the determinant of $\mathcal L_p$ is defined by $\det\mathcal L_p=\lambda_1(p)\cdots\lambda_{n-1}(p)$.

Let $(\,\cdot\,|\,\cdot\,)$ be the $L^2$ inner product on $\Omega^{0,q}(X)$ induced by $\langle\,\cdot\,|\,\cdot\,\rangle$ and let $\norm{\cdot}$ denote the corresponding norm. Then for all $u, v\in\Omega^{0,q}(X)$
\begin{equation}
(u|v)=\int_X\langle u| v\rangle dv_X,
\end{equation}
where $dv_X$ is the volume form on $X$ induced by the $T$-rigid Hermitian metric.
As before, for $m\in\mathbb Z$, we denote by
\begin{equation}
\Omega_m^{0,q}(X)=\{u\in\Omega^{0,q}(X): Tu=imu\}
\end{equation}
the eigenspace of $T$.
Let $L^2_{(0,q),m}(X)$ be the completion of $\Omega_m^{0,q}(X)$ with respect to $(\cdot|\cdot)$. For $m\in\mathbb Z$, let
\begin{equation}
Q^{(q)}_{m}: L^2_{(0,q)}(X)\rightarrow L^2_{(0,q),m}(X)
\end{equation}
be the orthogonal projection with respect to $(\cdot|\cdot)$. Then for any $u\in \Omega^{0, q}(X)$ $$Q_m^{(q)}u=\frac{1}{2\pi}\int_{-\pi}^{\pi}u(e^{i\theta}\circ x)e^{-im\theta}d\theta.$$ By using the elementary Fourier analysis, it is straightforward to see that for any $u\in\Omega^{0, q}(X)$,
\begin{equation}
\sum_{m=-N}^NQ^{(q)}_mu\rightarrow u~\text{in}~C^\infty~\text{topology as}~N \rightarrow\infty.
\end{equation}
Thus for every $u\in L^2_{(0, q)}(X)$,
\begin{equation}
\sum_{m=-N}^NQ^{(q)}_mu\rightarrow u~\text{in}~L^2_{(0, q)}(X, L^k)~\text{as}~N \rightarrow\infty.
\end{equation}
If we denote the  $\lim\limits_{N\rightarrow\infty}\sum\limits_{m=-N}^NQ^{(q)}_mu$ by $\sum\limits_{m\in\mathbb Z}Q^{(q)}_mu$, then we write $u=\sum\limits_{m\in\mathbb Z}Q^{(q)}_mu$. Thus, we have the following Fourier decomposition:
\begin{equation}\label{x5}
\begin{split}
\Omega^{0, q}(X)=\bigoplus\limits_{m\in\mathbb Z} \Omega^{0, q}_m(X),
L^2_{(0, q)}(X)=\bigoplus\limits_{m\in\mathbb Z} L^2_{(0, q), m}(X).
\end{split}
\end{equation}
By (\ref{a8}) and (\ref{x5}), we have the following Fourier decomposition of the $q$-th Kohn-Rossi cohomology (see (1.39) in \cite{MM06})
\begin{equation}\label{x6}
H^q_{b}(X)\cong\bigoplus\limits_{m\in\mathbb Z} H^q_{b, m}(X).
\end{equation}

Let $\overline\partial_{b}^\ast: \Omega^{0,q+1}(X)\rightarrow\Omega^{0,q}(X)$ be the formal adjoint of $\overline\partial_b$ with respect to $(\cdot|\cdot)$. Since the Hermitian metrics $\langle\cdot|\cdot\rangle$ are $T$-rigid, we can check that
\begin{equation}\label{d}
T\overline\partial_{b}^\ast=\overline\partial_{b}^\ast T~\text{on}~\Omega^{0,q}(X), \forall q=1,\cdots,n-1
\end{equation}
and from (\ref{d}) we have
\begin{equation}\label{e}
\overline\partial_{b}^\ast:\Omega_m^{0,q+1}(X)\rightarrow\Omega_m^{0,q}(X),\forall m\in\mathbb Z.
\end{equation}
Put
$$\Box_{b}^{(q)}:=\overline\partial_b\overline\partial_{b}^\ast
+\overline\partial_{b}^\ast\overline\partial_b:\Omega^{0,q}(X)\rightarrow\Omega^{0,q}(X).$$
Combining (\ref{c}), (\ref{d}) and (\ref{e}), we have
\begin{equation}\label{a9}
T\Box_{b}^{(q)}=\Box^{(q)}_{b}T~\text{on}~\Omega^{0,q}(X), \forall q=0,1,\cdots,n-1
\end{equation}
and (\ref{a9}) implies that
\begin{equation}
\Box_{b}^{(q)}:\Omega_m^{0,q}(X)\rightarrow\Omega_m^{0,q}(X),\forall m\in\mathbb Z.
\end{equation}
We will write $\Box^{(q)}_{b, m}$ to denote the restriction of $\Box^{(q)}_{b}$ on $\Omega_m^{0,q}(X)$. For every $m\in\mathbb Z$, we extend $\Box^{(q)}_{b, m}$ to $L^2_{(0,q),m}(X)$ by
\begin{equation}\label{j2}
\Box^{(q)}_{b, m}:{\rm Dom}(\Box^{(q)}_{b,m})\subset L^2_{(0,q),m}(X)\rightarrow L^2_{(0,q),m}(X),
\end{equation}
where ${\rm Dom}(\Box^{(q)}_{b,m})=\{u\in L^2_{(0,q),m}(X): \Box^{(q)}_{b, m}u\in L^2_{(0,q),m}(X)~~\text{in the sense of distribution}\}$.

The following result follows from Kohn's $L^2$-estimate (see Theorem 8.4.2 in \cite{CS01}).

\begin{theorem}\label{f}
For every $s\in\mathbb{N}_0:=\mathbb N\cup\{0\}$, there exists a constant $C_{s}>0$ such that
\begin{equation}
\|u\|_{s+1}\leq C_{s}\left(\|\Box^{(q)}_{b}u\|_{s}+\|Tu\|_s+\|u\|_s\right),\forall u\in\Omega^{0,q}(X)
\end{equation}
where $\|\cdot\|_s$ denotes the standard Sobolev norm of order $s$ on $X$.
\end{theorem}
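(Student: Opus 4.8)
The plan is to reduce the inequality to the standard elliptic a priori estimate on the compact manifold $X$. The Kohn Laplacian $\Box^{(q)}_b$ is not elliptic --- indeed, without a Levi curvature hypothesis it is not even subelliptic, which is exactly why the term $\norm{Tu}_s$ has to appear on the right --- but it is elliptic in every codirection except the one spanned by $\omega_0$, and that single missing direction is supplied by the transversal vector field $T$. So I would introduce the second order differential operator
\[
P:=\Box^{(q)}_b-T^2\colon\Omega^{0,q}(X)\To\Omega^{0,q}(X)
\]
and prove the estimate for $P$, which (unlike $\Box^{(q)}_b$) is elliptic.

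The first step is the symbol computation. Since $\ddbar_b$ differentiates only along $T^{1,0}X\oplus T^{0,1}X$, its principal symbol at $(x,\xi)$ is $v\mapsto(\pi^{0,1}\xi)\wedge v$, where $\pi^{0,1}\colon\mathbb CT^\ast X\to T^{\ast0,1}X$ is the projection coming from the decomposition $\mathbb CT^\ast X=T^{\ast1,0}X\oplus T^{\ast0,1}X\oplus\set{\lambda\omega_0:\lambda\in\mathbb C}$; it is precisely here that transversality \eqref{x2} enters. Hence the principal symbol of $\Box^{(q)}_b$ is $\abs{\pi^{0,1}\xi}^2\operatorname{Id}$, which is invertible unless $\xi$ is a multiple of $\omega_0(x)$, while the principal symbol of $-T^2$ is $\langle\xi,T\rangle^2\operatorname{Id}$, which is invertible unless $\xi$ annihilates $T(x)$. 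A nonzero real $\xi$ cannot meet both conditions, so $P$ is elliptic; writing $\xi$ as the sum of its horizontal part $\xi'$ and its $\omega_0$-component and using $\abs{\xi'}^2=2\abs{\pi^{0,1}\xi}^2$ for real $\xi$, one in fact gets $\sigma_2(P)(x,\xi)\ge\tfrac12\abs{\xi}^2\operatorname{Id}$.

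The second step is to invoke the elliptic estimate. Since $X$ is compact without boundary and $P$ is elliptic of order $2$, a parametrix construction for $P$ yields, for each $t\in\Real$, a constant $C_t>0$ with $\norm{v}_{t+2}\le C_t(\norm{Pv}_t+\norm{v}_0)$ for all $v\in\Omega^{0,q}(X)$. Applying this with $t=s-1$ and $v=u$ gives $\norm{u}_{s+1}\le C_{s-1}(\norm{Pu}_{s-1}+\norm{u}_0)$, and it only remains to bookkeep the $T^2$ term: $\norm{Pu}_{s-1}\le\norm{\Box^{(q)}_bu}_{s-1}+\norm{T(Tu)}_{s-1}\le\norm{\Box^{(q)}_bu}_s+C\norm{Tu}_s$, the last step because $T$ is first order and hence bounded $H^s\to H^{s-1}$. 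Combining this with $\norm{u}_0\le\norm{u}_s$ gives the desired inequality. There is no genuine obstacle here; the entire content is the elementary observation that $\Box^{(q)}_b-T^2$ is elliptic, i.e. that transversality of the $S^1$-action fills in the degenerate direction of $\Box^{(q)}_b$ --- equivalently, one may simply quote Kohn's $L^2$-estimate, Theorem 8.4.2 in \cite{CS01}.
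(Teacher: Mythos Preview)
Your argument is correct. The paper does not actually prove this theorem but simply attributes it to Kohn's $L^2$-estimate, citing Theorem~8.4.2 in \cite{CS01} --- the very reference you mention in your closing line --- so your self-contained ellipticity argument for $\Box^{(q)}_b-T^2$ supplies the details the paper leaves implicit.
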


From Theorem \ref{f}, we deduce that

\begin{theorem}\label{g}
Fix $m\in\mathbb Z$, for every $s\in\mathbb N_0$, there is a constant $C_{s, m}>0$ such that
\begin{equation}\label{xx}
\|u\|_{s+1}\leq C_{s, m}\left(\|\Box^{(q)}_{b, m}u\|_s+\|u\|_s\right), \forall u\in\Omega^{0,q}_m(X).
\end{equation}
\end{theorem}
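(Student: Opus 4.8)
The plan is to obtain Theorem~\ref{g} as an immediate specialization of Theorem~\ref{f}, using the single observation that on the eigenspace $\Omega^{0,q}_m(X)$ the operator $T$ acts as the scalar $im$. So first I would fix $m\in\mathbb Z$ and take an arbitrary $u\in\Omega^{0,q}_m(X)$; by the very definition $\Omega^{0,q}_m(X)=\{u\in\Omega^{0,q}(X):Tu=imu\}$ we have $Tu=imu$, and since multiplication by the constant $im$ is bounded on every Sobolev space with norm $|m|$, this gives $\|Tu\|_s=|m|\,\|u\|_s$ for each $s\in\mathbb N_0$.

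Next I would record that, by \eqref{a9}, $\Box^{(q)}_{b}$ maps $\Omega^{0,q}_m(X)$ into itself, and that $\Box^{(q)}_{b,m}$ was defined precisely as the restriction of $\Box^{(q)}_{b}$ to $\Omega^{0,q}_m(X)$; hence $\Box^{(q)}_{b}u=\Box^{(q)}_{b,m}u$ whenever $u\in\Omega^{0,q}_m(X)$. Since $\Omega^{0,q}_m(X)\subset\Omega^{0,q}(X)$, Theorem~\ref{f} applies to such $u$ verbatim, and substituting the two identities above into its estimate yields
\[
\|u\|_{s+1}\leq C_s\big(\|\Box^{(q)}_{b,m}u\|_s+|m|\,\|u\|_s+\|u\|_s\big)\leq C_s(1+|m|)\big(\|\Box^{(q)}_{b,m}u\|_s+\|u\|_s\big),
\]
so the assertion follows with $C_{s,m}:=C_s(1+|m|)$, where $C_s$ is the constant from Theorem~\ref{f}.

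I do not expect any genuine obstacle here: all of the analytic work is already contained in Theorem~\ref{f}, i.e. in Kohn's subelliptic estimate with the extra term $\|Tu\|_s$ kept on the right-hand side; once that is granted, restricting to a single Fourier component merely absorbs $\|Tu\|_s$ into $\|u\|_s$ at the cost of the harmless factor $1+|m|$. The only minor points to keep straight are that the constant now legitimately depends on $m$ (since $|m|$ enters), and that one must invoke \eqref{a9} to know $\Box^{(q)}_{b,m}$ really is the relevant restriction of $\Box^{(q)}_{b}$ so that the term $\|\Box^{(q)}_{b}u\|_s$ in Theorem~\ref{f} may be rewritten as $\|\Box^{(q)}_{b,m}u\|_s$.
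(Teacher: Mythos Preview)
Your proposal is correct and matches the paper's approach exactly: the paper simply states ``From Theorem~\ref{f}, we deduce that'' and records Theorem~\ref{g} without further argument, so your write-up is precisely the intended deduction, with the details (namely $Tu=imu$ giving $\|Tu\|_s=|m|\,\|u\|_s$ and $\Box^{(q)}_{b}u=\Box^{(q)}_{b,m}u$ on $\Omega^{0,q}_m(X)$) spelled out.
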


From Theorem~\ref{g} and some standard argument in functional analysis, we deduce the following Hodge theory for $\Box^{(q)}_{b, m}$ (see Section 3 in~\cite{CHT15})

\begin{theorem}\label{gI}
Fix $m\in\mathbb Z$. $\Box^{(q)}_{b, m}:\mathrm{Dom}(\Box^{(q)}_{b, m})\subset L^2_{(0,q), m}(X)\rightarrow L^2_{(0,q), m}(X)$ is a self-adjoint operator. The spectrum of $\Box^{(q)}_{b, m}$ denoted by
$\mathrm {Spec}(\Box^{(q)}_{b, m})$ is a discrete subset of $[0,\infty)$. For every $\lambda\in\mathrm{Spec}(\Box^{(q)}_{b, m})$ the eigenspace with respect to $\lambda$
\begin{equation}\label{h}
\mathcal{H}^q_{b, m,\lambda}(X)=\left\{u\in\mathrm{Dom}(\Box^{(q)}_{b, m}):\Box^{(q)}_{b, m}u=\lambda u\right\}
\end{equation}
is finite dimensional with $\mathcal{H}^q_{b, m,\lambda}(X)\subset\Omega^{0,q}_m(X)$ and for $\lambda=0$ we denote by $\mathcal H^q_{b, m}(X)$ the harmonic space $\mathcal H^q_{b, m,0}(X, L^k)$ for brevity and then we have the Dolbeault isomorphism
\begin{equation}\label{i}
\mathcal{H}^q_{b, m}(X)\cong H^q_{b,m}(X).
\end{equation}
In particular, from (\ref{i}) we have
\begin{equation}\label{j3}
\dim H^q_{b,m}(X)<\infty, \forall ~m\in\mathbb Z, \forall ~0\leq q\leq n-1.
\end{equation}
\end{theorem}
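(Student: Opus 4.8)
The plan is to deduce the whole statement from the a priori estimate of Theorem~\ref{g}. On a general CR manifold $\Box^{(q)}_b$ is only subelliptic, but on the Fourier component $\Omega^{0,q}_m(X)$ the term $\norm{Tu}_s$ in Theorem~\ref{f} equals $\abs{m}\norm{u}_s$, so Theorem~\ref{g} says that $\Box^{(q)}_{b,m}$ gains one full derivative; from there everything is the standard Hodge package for a nonnegative self-adjoint operator with compact resolvent.

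First I would settle self-adjointness. Since $X$ is compact without boundary, $\Box^{(q)}_b$ is symmetric and $\geq0$ on $\Omega^{0,q}(X)$; the sesquilinear form $Q(u,v)=(\ddbar_bu\,|\,\ddbar_bv)+(\ddbar_b^*u\,|\,\ddbar_b^*v)$ is closable and its Friedrichs extension is a nonnegative self-adjoint operator whose domain is precisely the $\operatorname{Dom}(\Box^{(q)}_b)$ of~\eqref{j2} (Gaffney). By~\eqref{c} and~\eqref{d}, $T$ commutes with $\ddbar_b$ and $\ddbar_b^*$, so the closed subspace $L^2_{(0,q),m}(X)$ reduces $\Box^{(q)}_b$; the restriction $\Box^{(q)}_{b,m}$ is then the nonnegative self-adjoint operator attached to $Q$ on $L^2_{(0,q),m}(X)$, with domain as in~\eqref{j2}.

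Next I would prove that the resolvent is compact. The estimate of Theorem~\ref{g} with $s=0$ must first be extended from $\Omega^{0,q}_m(X)$ to all of $\operatorname{Dom}(\Box^{(q)}_{b,m})$ by a Friedrichs-mollifier argument: one regularizes and then applies $Q^{(q)}_m$, so the approximating forms stay inside $L^2_{(0,q),m}(X)$, obtaining $\operatorname{Dom}(\Box^{(q)}_{b,m})\subset H^1$ together with $\norm{u}_1\leq C_m\bigl(\norm{\Box^{(q)}_{b,m}u}_0+\norm{u}_0\bigr)$. Hence $(\Box^{(q)}_{b,m}+\operatorname{Id})^{-1}$ maps $L^2_{(0,q),m}(X)$ boundedly into $H^1$, and Rellich's lemma makes it compact. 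The spectral theorem for a nonnegative self-adjoint operator with compact resolvent then yields that $\mathrm{Spec}(\Box^{(q)}_{b,m})$ is discrete in $[0,\infty)$, with each $\mathcal{H}^q_{b,m,\lambda}(X)$ finite-dimensional; bootstrapping Theorem~\ref{g} on an eigenform (if $u\in H^s$ then $\Box^{(q)}_{b,m}u=\lambda u\in H^s$, so $u\in H^{s+1}$) and Sobolev embedding give $\mathcal{H}^q_{b,m,\lambda}(X)\subset\Omega^{0,q}_m(X)$.

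Finally I would build the Dolbeault isomorphism~\eqref{i}. As $0$ is isolated in $\mathrm{Spec}(\Box^{(q)}_{b,m})$, the harmonic projection $\pi^{(q)}_m$ onto $\mathcal{H}^q_{b,m}(X)$ and the Green operator $G^{(q)}_m$ (zero on $\mathcal{H}^q_{b,m}(X)$, the inverse of $\Box^{(q)}_{b,m}$ on its orthogonal complement) are bounded, commute with $\ddbar_b$, $\ddbar_b^*$, $T$, and, by the regularity just proved, preserve $\Omega^{0,q}_m(X)$. For $u\in\Omega^{0,q}_m(X)$ one then has $u=\pi^{(q)}_mu+\ddbar_b(\ddbar_b^*G^{(q)}_mu)+\ddbar_b^*(\ddbar_bG^{(q)}_mu)$ with all three terms in $\Omega^{0,q}_m(X)$; if $\ddbar_bu=0$ a one-line pairing shows $\norm{\ddbar_b^*\ddbar_bG^{(q)}_mu}^2=0$, so $u-\pi^{(q)}_mu=\ddbar_b(\ddbar_b^*G^{(q)}_mu)$ with $\ddbar_b^*G^{(q)}_mu\in\Omega^{0,q-1}_m(X)$ by~\eqref{e}. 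Thus $[u]\mapsto\pi^{(q)}_mu$ is a well-defined linear map $H^q_{b,m}(X)\to\mathcal{H}^q_{b,m}(X)$, surjective because a harmonic form represents its own class and injective because $\pi^{(q)}_mu=0$ makes $u$ exact; hence an isomorphism, which is~\eqref{i}, and~\eqref{j3} follows at once since $\mathcal{H}^q_{b,m}(X)=\mathcal{H}^q_{b,m,0}(X)$ is finite-dimensional. The one genuinely delicate point is the mollification step: one must regularize compatibly with the $S^1$-action so as not to leave $L^2_{(0,q),m}(X)$ while upgrading the a priori estimate to the whole $L^2$-domain; the remaining arguments are routine functional analysis and elliptic bootstrapping.
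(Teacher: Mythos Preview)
Your proposal is correct and matches the paper's approach: the paper does not give a detailed proof of Theorem~\ref{gI} but simply says it follows ``from Theorem~\ref{g} and some standard argument in functional analysis'' with a reference to \cite{CHT15}, and what you have written is precisely that standard argument---self-adjointness via the Friedrichs extension, compact resolvent from the $s=0$ estimate plus Rellich, discrete spectrum and finite-dimensional smooth eigenspaces by bootstrapping Theorem~\ref{g}, and the Hodge decomposition for the Dolbeault isomorphism. Your flagging of the mollification step (regularizing while staying inside $L^2_{(0,q),m}(X)$ via $Q^{(q)}_m$) as the one nontrivial point is apt and is exactly the sort of detail the cited reference supplies.
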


\begin{remark}
We would like to mention that transversal property (\ref{x2}) of the $S^1$-action  is a necessary condition for the finite dimension of ${\rm dim}H^q_{b, m}(X)$. In fact, we have the following counterexample when the $S^1$-action is not transversal. Let $$X=\mathbb S^3:=\{(z_1, z_2)\in\mathbb C^2: |z_1|^2+|z_2|^2=1\}.$$ The $S^1$-action on $X$ is defined by $e^{i\theta}\circ(z_1, z_2)=(e^{i\theta} z_1, e^{-in\theta}z_2), n\geq 1$. Let $T$ be the global induced vector field. By definition
$$T=i\left(z_1\frac{\partial}{\partial z_1}-\overline z_1\frac{\partial}{\partial\overline z_1}-nz_2\frac{\partial}{\partial z_2}+n\overline z_2\frac{\partial}{\partial\overline z_2}\right).$$
We can check that $T$ is not transversal to $T^{1,0}X\bigoplus T^{0, 1}X$ and thus the $S^1$-action defined above is not transversal. Moreover, we have ${\rm dim}H^0_{b, m}(X)=\infty$. This is because the functions $\{u_k=z_1^{m+nk}z_2^k, k=1,2,3,\ldots\}\subset H^0_{b, m}(X)$ when restricted on $X$.
\end{remark}

\subsection{Canonical local coordinates}\label{q}

In this work, we need the following result due to Baouendi-Rothschild-Treves, (see Proposition I.2. in \cite{BRT85}).

\begin{theorem}\label{j}
Let $X$ be a compact CR manifold of ${\rm dim}X=2n-1, n\geq2$ with a transversal CR $S^1$- action. Let $\langle\cdot|\cdot\rangle$ be the given $T$-rigid Hermitian metric on $X$.
For every point $x_0\in X$, there exists local coordinates $(x_1,\cdots,x_{2n-1})=(z,\theta)=(z_1,\cdots,z_{n-1},\theta), z_j=x_{2j-1}+ix_{2j},j=1,\cdots,n-1, x_{2n-1}=\theta$, defined in some small neighborhood $D=\{(z, \theta): |z|<\varepsilon, |\theta|<\delta\}$ of $x_0$ such that
\begin{equation}\label{e-can}
\begin{split}
&T=\frac{\partial}{\partial\theta}\\
&Z_j=\frac{\partial}{\partial z_j}+i\frac{\partial\varphi(z)}{\partial z_j}\frac{\partial}{\partial\theta},j=1,\cdots,n-1
\end{split}
\end{equation}
where $\{Z_j(x)\}_{j=1}^{n-1}$ form a basis of $T_x^{1,0}X$, for each $x\in D$ and $\varphi(z)\in C^\infty(D,\mathbb R)$ is independent of $\theta$. Moreover, on $D$ we can take $(z,\theta)$ and $\varphi$ so that $(z(x_0),\theta(x_0))=(0,0)$ and $\varphi(z)=\sum\limits_{j=1}^{n-1}\lambda_j|z_j|^2+O(|z|^3), \forall (z, \theta)\in D$, where $\{\lambda_j\}_{j=1}^{n-1}$ are the eigenvalues of Levi-form of $X$ at $x_0$ with respect to the given $T$-rigid Hermitian metric on $X$.
\end{theorem}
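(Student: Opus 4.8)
\noindent\emph{Sketch of proof.} The plan is to straighten $T$ first, then straighten the part of the CR structure transverse to $T$ via the Newlander--Nirenberg theorem, and finally normalise the resulting real potential by elementary changes of coordinates --- in essence the argument of Baouendi--Rothschild--Treves. Since the $S^1$-action is transversal, $T$ vanishes nowhere, so by the flow-box theorem there are real coordinates $(u_1,\dots,u_{2n-2},\theta)$ centred at $x_0$ with $T=\partial/\partial\theta$. Choose any local frame $Z_1,\dots,Z_{n-1}$ of $T^{1,0}X$ near $x_0$. The CR property \eqref{x1} of the action gives $[\partial_\theta,Z_j]=\sum_k c_{jk}Z_k$ for smooth functions $c_{jk}$, and solving the linear ODE $\partial_\theta G=-GC$ in $\theta$ with $G|_{\theta=0}=\mathrm{Id}$ yields an invertible matrix $G=(g_{jk})$ so that the frame $\sum_k g_{jk}Z_k$ commutes with $\partial_\theta$. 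Relabelling, we may assume each $Z_j$ has coefficients independent of $\theta$, so $Z_j=\widehat Z_j+b_j\,\partial_\theta$ with $\widehat Z_j=\sum_k a_{jk}(u)\,\partial/\partial u_k$ and $b_j=b_j(u)$.

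By the transversality condition \eqref{x2}, $\widehat Z_1,\dots,\widehat Z_{n-1}$ together with their conjugates form a frame of $\mathbb C T\mathbb R^{2n-2}$ near $0$, so $V:=\mathrm{span}\{\widehat Z_1,\dots,\widehat Z_{n-1}\}$ defines an almost complex structure on a neighbourhood of $0$ in $\mathbb R^{2n-2}$. Expanding brackets and using $\theta$-independence gives $[Z_i,Z_j]=[\widehat Z_i,\widehat Z_j]+(\widehat Z_ib_j-\widehat Z_jb_i)\,\partial_\theta$, and comparing with $[Z_i,Z_j]\in\mathrm{span}\{Z_k\}$ forces $[\widehat Z_i,\widehat Z_j]\in\mathrm{span}\{\widehat Z_k\}$; hence $V$ is integrable. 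By the Newlander--Nirenberg theorem there are complex coordinates $z=(z_1,\dots,z_{n-1})$ centred at $0$ with $\mathrm{span}\{\widehat Z_j\}=\mathrm{span}\{\partial/\partial z_j\}$, and replacing the $Z_j$ by suitable linear combinations we may assume $Z_j=\partial/\partial z_j+\beta_j(z)\,\partial/\partial\theta$ with $\beta_j$ smooth and independent of $\theta$. Now $[Z_i,Z_j]=(\partial_{z_i}\beta_j-\partial_{z_j}\beta_i)\,\partial_\theta$ lies in $\mathrm{span}\{Z_k\}$ only if it vanishes, so $\partial_{z_i}\beta_j=\partial_{z_j}\beta_i$ for all $i,j$; by the $\partial$-Poincar\'e lemma (solving the $\overline\partial$-equation in one complex variable at a time, after conjugation) there is, on a smaller polydisc, a smooth complex-valued $\Psi(z)$ with $\partial_{z_j}\Psi=\beta_j$. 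Writing $\Psi=-f+i\varphi$ with $f,\varphi$ real-valued, the substitution $\theta\mapsto\theta+f(z)$ fixes $T=\partial/\partial\theta$ and turns $\beta_j$ into $i\,\partial_{z_j}\varphi$; thus $Z_j=\partial/\partial z_j+i(\partial\varphi/\partial z_j)\,\partial/\partial\theta$ with $\varphi$ smooth, real-valued, and independent of $\theta$.

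It remains to normalise $\varphi$, which I would do while keeping $(z(x_0),\theta(x_0))=(0,0)$. Replacing $\Psi$ by $\Psi+\overline p$ for a holomorphic polynomial $p$ leaves the $\beta_j$ unchanged and replaces $\varphi$ by $\varphi-\operatorname{Im}p$, a pluriharmonic correction; choosing $p$ of degree $\le2$ to absorb the pluriharmonic part of the $2$-jet of $\varphi$, we may assume $\varphi(0)=0$, $d\varphi(0)=0$ and $\varphi(z)=\sum_{i,j}h_{ij}z_i\bar z_j+O(|z|^3)$ with $(h_{ij})$ Hermitian. An affine holomorphic change $z\mapsto Az$ fixes $T=\partial/\partial\theta$, replaces the frame by a congruent one with potential $\varphi\circ A$, and makes both $(h_{ij})$ and the positive-definite Gram matrix $\bigl(\langle Z_i\,|\,Z_j\rangle(x_0)\bigr)$ transform by a common congruence $M\mapsto BMB^{*}$; choosing $A$ so that this congruence simultaneously brings the Gram matrix to the identity and $(h_{ij})$ to $\mathrm{diag}(\lambda_1,\dots,\lambda_{n-1})$, we obtain $\varphi(z)=\sum_j\lambda_j|z_j|^2+O(|z|^3)$ with $\{Z_j\}$ orthonormal at $x_0$. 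Finally, a direct computation gives $[Z_i,\overline{Z_j}]=-2i(\partial_{z_i}\partial_{\bar z_j}\varphi)\,\partial_\theta$ and, since $d\varphi(0)=0$, $\omega_0(x_0)=-d\theta$, whence $\mathcal L_{x_0}(Z_i,\overline{Z_j})=\partial_{z_i}\partial_{\bar z_j}\varphi(0)=\lambda_i\delta_{ij}$; as $\{Z_j\}$ is orthonormal at $x_0$, the $\lambda_j$ are exactly the eigenvalues of the Levi form at $x_0$ with respect to the given $T$-rigid metric, as asserted.

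The only non-elementary ingredient is the Newlander--Nirenberg theorem, applied to the quotient almost complex structure $V$; the remaining steps use only the flow-box theorem, linear ODEs, the solvability of $\overline\partial$ in one complex variable, and linear algebra. I expect the part requiring the most care to be the final normalisation: one must check that the pluriharmonic modification of $\varphi$ and the affine holomorphic change of $z$ both preserve the canonical form $Z_j=\partial/\partial z_j+i(\partial\varphi/\partial z_j)\,\partial/\partial\theta$, and that the quadratic coefficients one ends up with are genuinely the Levi eigenvalues of the \emph{prescribed} $T$-rigid metric, not of some auxiliary structure introduced during the straightening.
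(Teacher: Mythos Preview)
Your sketch is correct and is precisely the Baouendi--Rothschild--Treves argument that the paper invokes; the paper itself does not supply a proof of Theorem~\ref{j} but simply cites \cite[Proposition~I.2]{BRT85}, so there is nothing further to compare. Your cautionary remarks at the end are well placed: the check that the pluriharmonic correction and the linear holomorphic change preserve the canonical form $Z_j=\partial_{z_j}+i(\partial_{z_j}\varphi)\partial_\theta$, and the verification that simultaneous reduction of the Gram matrix and the complex Hessian yields exactly the Levi eigenvalues for the prescribed $T$-rigid metric, are the only points where a careless write-up could go wrong, and you have handled them correctly.
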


\begin{remark}
Let $D$ be as in Theorem~\ref{j}. We will always identify $D$ with an open set of $\mathbb R^{2n-1}$ and
we call $D$ canonical local patch and $(z, \theta,\varphi)$ canonical coordinates. The constants $\varepsilon$ and $\delta$ in Theorem \ref{j} depend on $x_0$. Let $x_0\in D$. We say that $(z,\theta,\varphi)$ is trivial at $x_0$ if $(z(x_0),\theta(x_0))=(0,0)$ and $\varphi(z)=\sum\limits_{j=1}^{n-1}\lambda_j|z_j|^2+O(|z|^3)$, where $\{\lambda_j\}_{j=1}^{n-1}$ are the eigenvalues of Levi-form of $X$ at $x_0$ with respect to the $T$-rigid Hermitian metric $\langle\,\cdot\,|\,\cdot\,\rangle$.
\end{remark}

\begin{lemma}\label{l-gue150615}
Let $x_0\in X_{\rm reg}$. Then we can find canonical coordinates $(z,\theta,\varphi)$ defined in $D=\{(z,\theta): |z|<\varepsilon_0, |\theta|<\pi\}$ such that $(z,\theta,\varphi)$ is trivial at $x_0$.
\end{lemma}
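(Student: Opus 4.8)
The plan is to apply Theorem~\ref{j} to obtain canonical coordinates near $x_0$ and then enlarge the $\theta$-range to the full interval $(-\pi,\pi)$ using that $x_0$ is a regular point of the $S^1$-action. First I would invoke Theorem~\ref{j} at $x_0$ to get canonical coordinates $(z,\theta,\varphi)$ on some neighborhood $D'=\{(z,\theta):|z|<\varepsilon',|\theta|<\delta'\}$, trivial at $x_0$, in which $T=\partial/\partial\theta$ and $Z_j=\partial/\partial z_j+i(\partial\varphi/\partial z_j)\partial/\partial\theta$. The only issue is that a priori $\delta'$ may be smaller than $\pi$, so the coordinate patch need not cover a full orbit-arc; the content of the lemma is that at a regular point one can always arrange $\delta'=\pi$.

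The key step is the following. Since $x_0\in X_{\rm reg}=X_1$, the isotropy group of $x_0$ is trivial, so the map $\theta\mapsto e^{i\theta}\circ x_0$ is injective on $(-\pi,\pi)$; by compactness of $X$ and continuity, the full orbit $S^1\circ x_0$ is an embedded circle, and there is $\varepsilon_0>0$ such that the sets $e^{i\theta}\circ(\{|z|<\varepsilon_0,\theta=0\}\text{-slice})$ for $\theta\in(-\pi,\pi)$ are pairwise disjoint and sweep out an embedded tubular neighborhood of the orbit-arc. Concretely, take the ``slice'' $S=\{(z,0):|z|<\varepsilon_1\}\subset D'$ through $x_0$ transverse to $T$ (this makes sense because \eqref{x2} gives transversality of $T$ to $T^{1,0}X\oplus T^{0,1}X$, and in the coordinates of Theorem~\ref{j} the slice $\theta=0$ is exactly such a transversal). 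Define $\Psi:(z,\theta)\mapsto e^{i\theta}\circ(z,0)$ for $|z|<\varepsilon_1$, $|\theta|<\pi$. Regularity of $x_0$ plus a standard tubular-neighborhood/injectivity-radius argument (shrinking $\varepsilon_1$ to some $\varepsilon_0$) shows $\Psi$ is a diffeomorphism onto an open set $D$. Declare $(z,\theta)$ to be the new coordinates via $\Psi$ and keep the same $\varphi=\varphi(z)$ (it is $\theta$-independent). Because the $S^1$-action in the original patch is just translation in $\theta$ (as $T=\partial/\partial\theta$ there), the vector fields $T$ and $Z_j$ have exactly the same expressions \eqref{e-can} in the new coordinates — this is the point where $T$-rigidity of the metric and of the frame $\{Z_j\}$, together with \eqref{a}, are used to see that pushing the frame forward by $de^{i\theta}$ preserves its form and that $Z_j$ remains a basis of $T^{1,0}X$ at every point of the enlarged patch. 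Triviality at $x_0$ is inherited since near $z=0$, $\theta=0$ the new coordinates agree to the needed order with the old ones.

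The main obstacle — though it is more bookkeeping than genuine difficulty — is verifying that the enlarged map $\Psi$ is globally injective on $\{|z|<\varepsilon_0\}\times(-\pi,\pi)$ and a diffeomorphism onto its image; this is where one must use that $x_0\in X_{\rm reg}$ rather than just $T(x_0)\neq0$ (local freeness alone would only give a short arc). I would handle this by the compactness argument sketched above: if no such $\varepsilon_0$ existed there would be sequences $z_\nu,z_\nu'\to0$ and $\theta_\nu\neq\theta_\nu'$ in $[-\pi,\pi]$ with $e^{i\theta_\nu}\circ(z_\nu,0)=e^{i\theta_\nu'}\circ(z_\nu',0)$; passing to convergent subsequences yields $e^{i\theta}\circ x_0=e^{i\theta'}\circ x_0$ with $\theta,\theta'\in[-\pi,\pi]$, hence $e^{i(\theta-\theta')}\circ x_0=x_0$, forcing $\theta=\theta'$ by regularity (the boundary case $\theta-\theta'=\pm2\pi$ is excluded since then $\theta_\nu,\theta_\nu'$ both tend to $\pm\pi$ and one checks $z_\nu=z_\nu'$, $\theta_\nu=\theta_\nu'$ for large $\nu$), a contradiction. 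Once injectivity is in hand, $d\Psi$ is invertible everywhere (it is invertible at $\theta=0$ by construction and the action by $e^{i\theta_0}$ is a diffeomorphism intertwining the two slices), so $\Psi$ is the desired coordinate chart with $\delta=\pi$.
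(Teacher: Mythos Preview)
Your proposal is correct and follows essentially the same route as the paper: start from the canonical patch of Theorem~\ref{j}, take the slice $\{\theta=0\}$, and extend via the flow map $\Psi(z,\theta)=e^{i\theta}\circ(z,0)$, using a compactness/contradiction argument together with $x_0\in X_{\rm reg}$ to shrink $\varepsilon$ so that $\Psi$ is injective on $\{|z|<\varepsilon_0\}\times(-\pi,\pi)$. The paper organizes the injectivity check by first isolating the range $\frac{\delta}{2}\le t\le 2\pi-\frac{\delta}{2}$ (your ``boundary case'' $\theta-\theta'\to\pm2\pi$ is handled there more cleanly than in your sketch, where the phrase ``both tend to $\pm\pi$'' and the conclusion ``$\theta_\nu=\theta_\nu'$'' are slightly off; the actual contradiction is that local injectivity forces $\theta_\nu-\theta_\nu'=2\pi$ while $|\theta_\nu-\theta_\nu'|<2\pi$), but the substance of the argument is the same.
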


\begin{proof}
Let $(z,\theta,\varphi)$ be any canonical coordinates defined in $D_1=\{(z,\theta): |z|<\varepsilon_1, |\theta|<\delta\}$ such that $(z,\theta,\varphi)$ is trivial at $x_0$. We identify $D_1$ with an open neighborhood of $x_0$. It is clear that
\begin{equation}\label{e-gue150615I}
e^{it}\circ (z_1,0)\neq (z_2,0),\ \ \forall 0<|t|<\delta, |z_1|<\varepsilon_1, |z_2|<\varepsilon_1.
\end{equation}
We claim that
\begin{equation}\label{e-gue150615II}
\mbox{There is a $0<\varepsilon_0<\varepsilon_1$ such that $e^{it}\circ (z_1,0)\neq (z_2,0)$, $\forall\frac{\delta}{2}\leq t\leq2\pi-\frac{\delta}{2}, |z_1|<\varepsilon_0, |z_2|<\varepsilon_0$.}
\end{equation}
If the claim is not true, for every $j\in\mathbb N$, we can find $z_{1,j}, z_{2,j}\in\mathbb C^{n-1}$, $\theta_j\in\mathbb R$ with $|z_{1,j}|<\frac{\varepsilon_1}{j}$, $|z_{2,j}|<\frac{\varepsilon_1}{j}$, $\frac{\delta}{2}\leq\theta_j\leq2\pi-\frac{\delta}{2}$ such that
\begin{equation}\label{e-gue150615III}
e^{i\theta_j}\circ(z_{1,j},0)=(z_{2,j},0),\ \ j=1,2,\ldots.
\end{equation}
From \eqref{e-gue150615III}, we get $e^{i\theta_0}\circ(0,0)=(0,0)$, for some $\frac{\delta}{2}\leq\theta_0\leq2\pi-\frac{\delta}{2}$. But $x_0\in X_{\rm reg}$, we get a contradiction. The claim follows.

Let $0<\varepsilon_0<\varepsilon_1$ be as in \eqref{e-gue150615II}. Consider the map
\[\begin{split}
\Phi:\{z\in\mathbb C^{n-1}: |z|<\varepsilon_0\}\times\{\theta\in\mathbb R: |\theta|<\pi\}&\longmapsto X,\\
(z,\theta)&\longmapsto e^{i\theta}\circ(z,0).
\end{split}\]
We claim that $\Phi$ is injective. If $e^{i\theta_1}\circ(z_1,0)=e^{i\theta_2}\circ(z_2,0)$, for some $|z_1|<\varepsilon_0$, $|z_2|<\varepsilon_0$, $|\theta_1|<\pi$, $|\theta_2|<\pi$. We have $e^{i(\theta_1-\theta_2)}\circ(z_1,0)=(z_2,0)$. We may assume that $\theta_1\geq\theta_2$. From \eqref{e-gue150615II}, we see that $0\leq\theta_1-\theta_2\leq\frac{\delta}{2}$ or $2\pi-\frac{\delta}{2}\leq\theta_1-\theta_2<2\pi$. If $2\pi-\frac{\delta}{2}\leq\theta_1-\theta_2<2\pi$. Then, $-\frac{\delta}{2}\leq\theta_1-\theta_2-2\pi<0$ and $e^{i(\theta_1-\theta_2-2\pi)}\circ(z_1,0)=(z_2,0)$. By \eqref{e-gue150615I}, we get a contradiction. We must have $0\leq\theta_1-\theta_2\leq\frac{\delta}{2}$. From \eqref{e-gue150615II}, we deduce that $\theta_1=\theta_2$ and $z_1=z_2$. Thus, $\Phi$ is injective. When $|z|<\varepsilon_0$, we can extend $\theta$ to $|\theta|<\pi$ by $\Phi$. The lemma follows.
\end{proof}

In the proof of Theorem~\ref{m}, we need the following

\begin{lemma}\label{l-gue150616}
Let $x_0\in X_k$, $k\in\mathbb N$, $k>1$. For every $\epsilon>0$, $\epsilon$ small, we can find canonical coordinates $(z,\theta,\varphi)$ defined in  $D_\epsilon=\{(z,\theta): |z|<\varepsilon_0, |\theta|<\frac{\pi}{k}-\epsilon\}$ such that $(z,\theta,\varphi)$ is trivial at $x_0$.
\end{lemma}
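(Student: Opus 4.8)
The plan is to imitate almost verbatim the proof of Lemma~\ref{l-gue150615}, the only difference being that the isotropy group of $x_0$ is now the cyclic group $\langle e^{i2\pi/k}\rangle$ instead of the trivial group; this is precisely what prevents opening the $\theta$-interval beyond $(-\tfrac{\pi}{k},\tfrac{\pi}{k})$ and forces the loss of $\epsilon$. First I would pick any canonical coordinates $(z,\theta,\varphi)$ trivial at $x_0$, defined on $D_1=\{(z,\theta):|z|<\varepsilon_1,|\theta|<\delta\}$; shrinking $\delta$ if necessary (a canonical chart restricts to any shorter $\theta$-interval) we may assume $0<\delta<\tfrac{2\pi}{k}$. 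Identifying $D_1$ with an open neighbourhood of $x_0$ in $X$ and using that $T=\tfrac{\partial}{\partial\theta}$, the flow of $T$ is translation in $\theta$, so $e^{it}\circ(z,0)=(z,t)$ for $|t|<\delta$; this gives the analogue of \eqref{e-gue150615I},
\[
e^{it}\circ(z_1,0)\neq(z_2,0),\qquad\forall\,0<|t|<\delta,\ |z_1|<\varepsilon_1,\ |z_2|<\varepsilon_1 .
\]

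The key point, analogous to \eqref{e-gue150615II}, is the following claim: for each small $\epsilon>0$ there is $0<\varepsilon_0=\varepsilon_0(\epsilon)<\varepsilon_1$ such that
\[
e^{it}\circ(z_1,0)\neq(z_2,0),\qquad\forall\,\tfrac{\delta}{2}\leq|t|\leq\tfrac{2\pi}{k}-\epsilon,\ |z_1|<\varepsilon_0,\ |z_2|<\varepsilon_0 .
\]
Suppose this fails. Using $e^{it}\circ(z_1,0)=(z_2,0)\Longleftrightarrow e^{-it}\circ(z_2,0)=(z_1,0)$ and pigeonholing on the sign of $t$, we may assume there are $z_{1,j},z_{2,j}\in\mathbb C^{n-1}$ with $|z_{1,j}|,|z_{2,j}|<\tfrac{\varepsilon_1}{j}$ and $t_j\in[\tfrac{\delta}{2},\tfrac{2\pi}{k}-\epsilon]$ with $e^{it_j}\circ(z_{1,j},0)=(z_{2,j},0)$ for all $j$. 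Passing to a subsequence, $z_{1,j},z_{2,j}\to0$ and $t_j\to t_0\in[\tfrac{\delta}{2},\tfrac{2\pi}{k}-\epsilon]$, and continuity of the $S^1$-action gives $e^{it_0}\circ x_0=x_0$. Since $[\tfrac{\delta}{2},\tfrac{2\pi}{k}-\epsilon]\subset(0,\tfrac{2\pi}{k})$, this contradicts $x_0\in X_k$ by \eqref{e-gue150614}, and the claim follows.

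Finally, with $\varepsilon_0$ as in the claim I would show that the map $\Phi:\{|z|<\varepsilon_0\}\times\{|\theta|<\tfrac{\pi}{k}-\epsilon\}\to X$, $(z,\theta)\mapsto e^{i\theta}\circ(z,0)$, is injective: if $\Phi(z_1,\theta_1)=\Phi(z_2,\theta_2)$ then $e^{i(\theta_1-\theta_2)}\circ(z_1,0)=(z_2,0)$ with $|\theta_1-\theta_2|<\tfrac{2\pi}{k}-2\epsilon$, and distinguishing the cases $|\theta_1-\theta_2|<\delta$ (first display) and $\delta\leq|\theta_1-\theta_2|\leq\tfrac{2\pi}{k}-\epsilon$ (the claim) forces $\theta_1=\theta_2$, $z_1=z_2$. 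Moreover $\Phi$ agrees with the original chart on $|\theta|<\delta$, and near any point with $\theta$-coordinate $\theta_0$ one has $\Phi(z,\theta_0+s)=e^{i\theta_0}\circ\Phi(z,s)$, so $\Phi$ is a local diffeomorphism; an injective local diffeomorphism between manifolds of equal dimension is an open embedding. Using $\Phi$ to extend the coordinate $\theta$ from $|\theta|<\delta$ to $|\theta|<\tfrac{\pi}{k}-\epsilon$ while keeping $z$ and $\varphi$ (recall $\varphi$ is $\theta$-independent and $[T,Z_j]=0$, so the relations \eqref{e-can} persist on the enlarged patch), we obtain canonical coordinates on $D_\epsilon$ which are trivial at $x_0$.

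I do not expect a genuine obstacle here: the only real content is the compactness argument for the claim. The one thing to watch is that $\tfrac{2\pi}{k}-2\epsilon$ be kept strictly below the smallest positive isotropy angle $\tfrac{2\pi}{k}$ of $x_0$, so that no $e^{i(\theta_1-\theta_2)}$ with $\theta_1\neq\theta_2$ fixes $x_0$; this is exactly why $\varepsilon_0$ must depend on $\epsilon$ and why one cannot reach $|\theta|<\tfrac{\pi}{k}$.
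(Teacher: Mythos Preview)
Your proposal is correct and follows essentially the same approach as the paper's proof: start from a small canonical chart trivial at $x_0$, use a compactness argument to exclude nontrivial orbit returns on the annular interval $[\tfrac{\delta}{2},\tfrac{2\pi}{k}-\epsilon]$ (the paper uses $\tfrac{2\pi}{k}-\tfrac{\epsilon}{2}$, an immaterial difference), then show the map $\Phi(z,\theta)=e^{i\theta}\circ(z,0)$ is injective on $\{|z|<\varepsilon_0\}\times\{|\theta|<\tfrac{\pi}{k}-\epsilon\}$ and use it to extend the $\theta$-coordinate. Your symmetrization via $|t|$ instead of the paper's WLOG $\theta_1\geq\theta_2$, and your extra remarks on why $\Phi$ is a local diffeomorphism and why \eqref{e-can} persists, are harmless elaborations of the same argument.
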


\begin{proof}
Let $(z,\theta,\varphi)$ be any canonical coordinates defined in $D_1=\{(z,\theta): |z|<\varepsilon_1, |\theta|<\delta\}$ such that $(z,\theta,\varphi)$ is trivial at $x_0$. We identify $D_1$ with an open neighborhood of $x_0$. It is clear that
\begin{equation}\label{e-gue150616}
e^{it}\circ (z_1,0)\neq (z_2,0),\ \ \forall 0<|t|<\delta, |z_1|<\varepsilon_1, |z_2|<\varepsilon_1.
\end{equation}
Fix $\epsilon>0$, $\epsilon$ small. We claim that
\begin{equation}\label{e-gue150616I}
\mbox{There is a $0<\varepsilon_0<\varepsilon_1$ such that $e^{it}\circ (z_1,0)\neq (z_2,0)$, $\forall\frac{\delta}{2}\leq t\leq\frac{2\pi}{k}-\frac{\epsilon}{2}, |z_1|<\varepsilon_0, |z_2|<\varepsilon_0$.}
\end{equation}
If the claim is not true, for every $j\in\mathbb N$, we can find $z_{1,j}, z_{2,j}\in\mathbb C^{n-1}$, $\theta_j\in\mathbb R$ with $|z_{1,j}|<\frac{\varepsilon_1}{j}$, $|z_{2,j}|<\frac{\varepsilon_1}{j}$, $\frac{\delta}{2}\leq \theta_j\leq\frac{2\pi}{k}-\frac{\epsilon}{2}$ such that
\begin{equation}\label{e-gue150616II}
e^{i\theta_j}\circ(z_{1,j},0)=(z_{2,j},0),\ \ j=1,2,\ldots.
\end{equation}
From \eqref{e-gue150616II}, we get $e^{i\theta_0}\circ(0,0)=(0,0)$, for some $\frac{\delta}{2}\leq\theta_0\leq\frac{2\pi}{k}-\frac{\epsilon}{2}$. But $x_0\in X_k$, we get a contradiction. The claim follows.

Let $0<\varepsilon_0<\varepsilon_1$ be as in \eqref{e-gue150616I}. Consider the map
\[\begin{split}
\Phi_\epsilon:\{z\in\mathbb C^{n-1}: |z|<\varepsilon_0\}\times\{\theta\in\mathbb R: |\theta|<\frac{\pi}{k}-\epsilon\}&\longmapsto X,\\
(z,\theta)&\longmapsto e^{i\theta}\circ(z,0).
\end{split}\]
We claim that $\Phi_\epsilon$ is injective. If $e^{i\theta_1}\circ(z_1,0)=e^{i\theta_2}\circ(z_2,0)$, for some $|z_1|<\varepsilon_0$, $|z_2|<\varepsilon_0$, $|\theta_1|<\frac{\pi}{k}-\epsilon$, $|\theta_2|<\frac{\pi}{k}-\epsilon$. We have $e^{i(\theta_1-\theta_2)}\circ(z_1,0)=(z_2,0)$. We may assume that $\theta_1\geq\theta_2$ and hence $0\leq\theta_1-\theta_2<\frac{2\pi}{k}-2\epsilon$. From \eqref{e-gue150616I}, we see that $0\leq\theta_1-\theta_2\leq\frac{\delta}{2}$. From \eqref{e-gue150616}, we deduce that $\theta_1=\theta_2$ and $z_1=z_2$. Thus, $\Phi_\epsilon$ is injective. When $|z|<\varepsilon_0$, we can extend $\theta$ to $|\theta|<\frac{\pi}{k}-\epsilon$ by $\Phi_\epsilon$. The lemma follows.
\end{proof}

By using canonical coordinates, we get another way to define $Tu, \forall u\in\Omega^{0,q}(X)$. Let $D$ be a canonical local patch with canonical coordinates $(z,\theta,\varphi)$. By (\ref{e-can}), $\{dz_j\}_{j=1}^{n-1}$ is the dual frame of $\{Z_j\}_{j=1}^{n-1}$. For a multi-index $J=(j_1,\ldots,j_q)\in\{1,2,\ldots,n\}^q$ we set $|J|=q$. We say that $J$ is strictly increasing if $1\leq j_1<j_2<\cdots<j_q\leq n$. We put $d\overline z^J=d\overline z_{j_1}\wedge d\overline z_{j_2}\wedge\cdots\wedge d\overline z_{j_q}$. It is clearly that $\{d\overline z^J, |J|=q, J~\text{strictly increasing}\}$ is a basis for $T^{\ast0,q}_xX$ for every $x\in D$. Let $u\in\Omega^{0,q}(X)$. On $D$, we write
$u=\sum_{|J|=q}^\prime u_J d\overline z^J$, where the notation $\sum^\prime$ means the summation over strictly increasing multiindices.
Then on $D$ we can check that
\begin{equation}\label{e-gue150620}
Tu=\sideset{}{'}\sum_{|J|=q} Tu_Jd\overline z^J,\ \ \ddbar_bTu=T\ddbar_bu.
\end{equation}

\begin{lemma}\label{b5}
Fix $x_0\in X$ and let $D=\tilde D\times(-\delta,\delta)\subset\mathbb C^{n-1}\times\mathbb R$ be a canonical local patch with canonical coordinates $(z,\theta,\varphi)$ such that  $(z,\theta,\varphi)$ is trivial at $x_0$.
We can find orthonormal frame $\{e^j\}_{j=1}^{n-1}$ of $T^{\ast0,1}X$ with respect to the fixed $T$-rigid Hermitian metric such that on $D=\tilde D\times(-\delta, \delta)$, we have $e^j(x)=e^j(z)=d\overline z_j+O(|z|), \forall x=(z, \theta)\in D,  j=1,\cdots,n-1$. Moreover, if we denote by $dv_X$ the volume form with respect to the $T$-rigid Hermitian metric on $\mathbb CTX$, then on $D$ we have
$dv_X=\lambda(z)dv(z)d\theta$
with $\lambda(z)\in C^\infty(\tilde D, \mathbb R)$ which does not depend on $\theta$ and $dv(z)=2^{n-1}dx_1\cdots dx_{2n-2}$.
\end{lemma}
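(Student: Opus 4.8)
\emph{Proof proposal.} The plan is to exploit the two special features of the canonical coordinates of Theorem~\ref{j} --- that in them the $S^1$-action is the translation $(z,\theta)\mapsto(z,\theta+t)$ and that $\varphi=\varphi(z)$ does not depend on $\theta$ --- together with the $T$-rigidity of the fixed metric from Lemma~\ref{a7}, to reduce the whole statement to $\theta$-independent linear algebra on $\tilde D$. The first step is to observe that on $D$ the vector fields $T=\partial/\partial\theta$, $Z_j$ and $\overline{Z_j}$ are all $T$-rigid: $de^{it}$ carries $\partial/\partial z_j$, $\partial/\partial\overline z_j$, $\partial/\partial\theta$ at $(z,\theta)$ to the same coordinate vectors at $(z,\theta+t)$, and $\varphi_{z_j}(z)$ is unaffected by $\theta\mapsto\theta+t$, so \eqref{e-can} gives $de^{it}(Z_j(z,\theta))=Z_j(z,\theta+t)$ and likewise for $\overline{Z_j}$ and $T$. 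Hence, by $T$-rigidity of $\langle\cdot|\cdot\rangle$, all the pairings $\langle Z_i|Z_j\rangle$, $\langle\overline{Z_i}|\overline{Z_j}\rangle$, $\langle Z_i|\overline{Z_j}\rangle$, $\langle Z_i|T\rangle$, $\langle\overline{Z_i}|T\rangle$, $\langle T|T\rangle$ on $D$ are functions of $z$ alone; by Lemma~\ref{a7} the mixed and $T$-pairings vanish, $\langle T|T\rangle=1$, and $h_{ij}(z):=\langle\overline{Z_i}(z)|\overline{Z_j}(z)\rangle$ is a smooth, $\theta$-independent, positive-definite Hermitian matrix on $\tilde D$.

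For the coframe I would run the Gram--Schmidt process on $\overline{Z_1},\dots,\overline{Z_{n-1}}$ relative to $(h_{ij}(z))$. This yields a lower-triangular matrix $A(z)$, smooth in $z$ and independent of $\theta$, such that $W_j:=\sum_k A_{jk}(z)\,\overline{Z_k}$ is an orthonormal frame of $T^{0,1}X$ on $D$; let $\{e^j\}_{j=1}^{n-1}$ be its dual coframe in $T^{\ast0,1}X$, which is then orthonormal for the induced dual metric, smooth, and $\theta$-independent. It remains to normalize at $x_0$. Triviality of the coordinates gives $\varphi_{\overline z_j}(0)=0$, hence $\overline{Z_j}(x_0)=\partial/\partial\overline z_j$; after a preliminary $\mathbb C$-linear change of the $z$-variables --- which preserves the canonical form \eqref{e-can} and the stated expansion of $\varphi$, and amounts to simultaneously diagonalizing the $T$-rigid metric and the Levi form at $x_0$ --- we may assume that $\{\partial/\partial\overline z_j\}$ is orthonormal at $x_0$. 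Then $h_{ij}(0)=\delta_{ij}$, so $A(0)=\mathrm{Id}$, $W_j(x_0)=\partial/\partial\overline z_j$, and since $\{d\overline z_j\}$ is exactly the coframe dual to $\{\overline{Z_j}\}$ inside $T^{\ast0,1}X$, we get $e^j(x_0)=d\overline z_j$; smoothness of $e^j$ then yields $e^j(z)=d\overline z_j+O(|z|)$ on $D$.

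For the volume form I would write $dv_X=\sqrt{\det g}\;dx_1\cdots dx_{2n-2}\,d\theta$, where $g=(g_{\alpha\beta})$ is the Riemannian metric induced on $TX$ by $\langle\cdot|\cdot\rangle$ in the real coordinates $(x_1,\dots,x_{2n-2},\theta)$; this is a genuine Riemannian metric since $\langle\cdot|\cdot\rangle$ is real on real vectors by Lemma~\ref{a7}. By \eqref{e-can}, each $\partial/\partial x_\alpha$ with $\alpha\le 2n-2$ equals a fixed constant-coefficient combination of $Z_j$ and $\overline{Z_j}$ plus a multiple of $T$ whose coefficient is a real, $\theta$-independent function of $z$ (a first derivative of $\varphi$), while $\partial/\partial\theta=T$. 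Expanding $g_{\alpha\beta}=\langle\partial/\partial x_\alpha|\partial/\partial x_\beta\rangle$ and invoking the $\theta$-independence of the pairings of $Z_j,\overline{Z_j},T$ from the first step shows that every $g_{\alpha\beta}$, and hence $\det g$, is a smooth function of $z$ only; setting $\lambda(z):=2^{-(n-1)}\sqrt{\det g(z)}$ and recalling $dv(z)=2^{n-1}dx_1\cdots dx_{2n-2}$ gives $dv_X=\lambda(z)\,dv(z)\,d\theta$.

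The only point that needs genuine care is the normalization at $x_0$ in the second paragraph: one must verify that the $\mathbb C$-linear change of the $z$-variables can be carried out so that the coordinates remain canonical and trivial while the coordinate frame becomes orthonormal at $x_0$ --- equivalently, that at $x_0$ the $T$-rigid metric and the Levi form admit a simultaneous diagonalization whose diagonal Levi entries are the eigenvalues $\lambda_j$. Everything else is the mechanical translation of the $T$-rigidity of $Z_j$, $\overline{Z_j}$, $T$ into $\theta$-independence of the Gram matrices $(h_{ij})$ and of $\det g$.
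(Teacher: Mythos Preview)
Your proof is correct and follows essentially the same approach as the paper's: both arguments use $T$-rigidity to see that the Gram matrix of the $\{d\overline z_j\}$ (equivalently, of the $\{\overline{Z_j}\}$) on $D$ depends only on $z$, make a preliminary $\mathbb C$-linear change of the $z$-coordinates to normalize this matrix to $\delta_{jk}$ at $x_0$, and then apply Gram--Schmidt. The only cosmetic differences are that the paper runs Gram--Schmidt directly on the coframe $\{d\overline z_j\}$ (you do it on the frame $\{\overline{Z_j}\}$ and dualize), and for the volume form the paper writes $dv_X=\sqrt{-1}^{\,n-1}\,\overline{e^1}\wedge e^1\wedge\cdots\wedge\overline{e^{n-1}}\wedge e^{n-1}\wedge(-\omega_0)$ using the already $\theta$-independent $e^j$ and the explicit form $-\omega_0=d\theta+\sum_j(\alpha_j\,dz_j+\overline{\alpha_j}\,d\overline z_j)$, which makes the factorization $\lambda(z)\,dv(z)\,d\theta$ immediate without computing the full Riemannian Gram matrix. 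Your concern in the last paragraph is exactly the one point the paper also glosses over with the phrase ``taking coordinate transformation of $z$ if needed''; your formulation (simultaneous diagonalization of the metric and the Levi form at $x_0$ by a linear change of $z$) is the right way to justify it and is compatible with the triviality condition on $\varphi$.
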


\begin{proof}
From the definition of the $T$-rigid Hermitian metric, we can check that the inner
product $\langle dz_k|dz_j\rangle$ does not depend on $\theta$. We denote by $g^{\overline k j}(z)=\langle d\overline z_k|d\overline z_j\rangle$ on $D$. Taking coordinate transformation of $z=(z_1, \ldots, z_{n-1})$ if needed such that $g^{\overline kj}(x_0)=\delta_{kj}$. By Gram-Schmidt process, we can find an orthonormal frame $\{e^j\}_{j=1}^{n-1}$ of $T^{\ast 0, 1}X$.
Write $e^j=\sum_{k=1}^{n-1}b_{j\overline k}d\overline z_k, j=1,
\ldots, n-1$. Since $g^{\overline kj}(z)=\langle d\overline z_k|d\overline z_j\rangle$ does not depend on $\theta$ on $D$, we can check that coefficients $\{b_{j\overline k}\}_{1\leq j, k\leq n-1}$ do not depend on $\theta$. Then $e^j(x)=d\overline z_j+O(z)$. Since $-\omega_0(z,
\theta)=d\theta+\sum_{j=1}^{n-1}(\alpha_j(z,
\theta)dz_j+\overline{\alpha_j(z, \theta)}d\overline z_j)$ and $\{e^1,
\ldots, e^{n-1}, \omega_0\}$ is an orthonormal frame of $\mathbb CTX$
over $D$, then the volume form on $D$ is defined by
$dv_X=\sqrt{-1}^{n-1}\overline e^1\wedge e^1\wedge\cdots\wedge\overline
e^{n-1}\wedge e^{n-1}\wedge(-\omega_0)$. The lemma follows.
\end{proof}

\begin{remark}\label{g1}
For any $x_0\in X$, let $D=\tilde D\times (-\delta, \delta)$ be a
canonical local patch with canonical coordinates $(z, \theta,\varphi)$ such that $(z, \theta,\varphi)$ is trivial at $x_0$. Here, $\tilde D=\{z\in\mathbb C^{n-1}: |z|<\varepsilon\}$. We identify $\tilde D$ with an open subset of $\mathbb C^{n-1}$ with complex coordinates $z=(z_1, \ldots, z_{n-1}).$ Since
$\{d\overline z_j\}_{j=1}^{n-1}$ is a frame of $T^{\ast 0, 1}X$ over
$D$, we will treat them as the frame of $T^{\ast 0, 1}\tilde D$ which is
the bundle of $(0, 1)$-forms over the domain $\tilde D.$ Let
$(g^{\overline k j}(z))$ be the Hermitian matrix defined in the proof of
Lemma~\ref{b5}. Then we define a Hermitian metric on $T^{\ast 0, 1}\tilde D$ given by $(g^{\overline k j}(z))_{j, k=1}^{n-1}$ with $\langle d\overline z_k|d\overline z_j\rangle=g^{\overline k j}$. We also denote by
$\langle\cdot|\cdot\rangle$ the Hermitian metric on $T^{\ast 0, 1}\tilde D$. By duality, it will induces a Hermitian metric on $T^{ 0, 1}\tilde D$. We extend the Hermitian metric to  $\mathbb CT\tilde D$ and $T^{\ast 0, q}\tilde D$ in the standard way and denote all the Hermitian metrics by $\langle\cdot|\cdot\rangle$. Then $\{e^j(z)\}_{j=1}^{n-1}$ defined in Lemma~\ref{b5} is also an
orthonormal frame of $T^{\ast 0, 1}\tilde D$. With respect to the given  Hermitian metric on $T^{\ast 0, 1}\tilde D$, the volume form on $\tilde
D$ is given by $\lambda(z)dv(z)$. Here, $\lambda(z)\in C^\infty(\tilde D, \mathbb R)$ is the
function defined in Lemma~\ref{b5}.
\end{remark}

\subsection{The scaling technique}\label{kk}

In this section, we will recall the scaling technique in \cite{Be04}, developed in \cite{HM12}, \cite{HM14}, \cite{H15} and \cite{HL15}.
Fix  $x_0\in X$, we take canonical local patch $D=\tilde D\times (-\delta, \delta)=\{(z, \theta): |z|<\varepsilon, |\theta|<\delta\}$ with canonical coordinates $(z,\theta,\varphi)$ such that $(z,\theta,\varphi)$ is trivial at $x_0$. In this section, we identify $\tilde D$ with an open subset of $\mathbb C^{n-1}=\mathbb R^{2n-2}$ with complex coordinates $z=(z_1, \ldots, z_{n-1})$. Let $L_1\in T^{1,0}\tilde D,\cdots, L_{n-1}\in T^{1,0}\tilde D$ be the dual frame of $\overline{e^1},\cdots, \overline{e^{n-1}}$ with respect to the Hermitian metric $\langle\cdot|\cdot\rangle$ defined in Remark~\ref{g1}. The Hermitian metric $\langle\cdot|\cdot\rangle$ on $\tilde D$ we have chosen in Lemma~\ref{b5} and Remark~\ref{g1} implies that
\begin{equation}\label{e-gue150303}
\left\langle\frac{\partial}{\partial x_j}(0)\Big|\frac{\partial}{\partial x_t}(0)\right\rangle=2\delta_{jt}
\end{equation}
for $j,t=1,\ldots, 2n-2$, and in the  coordinates $z=(z_1,\ldots, z_{n-1})$,
$
L_j=\frac{\partial}{\partial z_j}+O(z), j=1,\ldots,n-1,
$
where $\frac{\partial}{\partial z_j}=\frac{1}{2}(\frac{\partial}{\partial{x_{2j-1}}}-i\frac{\partial}{\partial x_{2j}}), j=1,\ldots, n-1$.

Let $M\subset\mathbb C^{n-1}$ be an open set.
Let $\Omega^{0,q}(M)$ be the space of smooth $(0,q)$-forms on $M$ and let $\Omega^{0,q}_0(M)$ be the subspace of $\Omega^{0,q}(M)$ whose elements have compact support in $M$.
Let $(\cdot|\cdot)_{2\varphi}$ be the  weighted inner product on the space $\Omega^{0,q}_0(\tilde D)$ defined as follows:
\begin{equation}
(f|g)=\int_{\tilde D}\langle f|g\rangle e^{-2\varphi(z)}\lambda(z)dv(z)
\end{equation}
where $f, g\in\Omega_0^{0,q}(\tilde D)$ and $\lambda(z)$ is as in Lemma~\ref{b5}. We denote by $L^2_{(0,q)}(\tilde D, 2\varphi)$  the completion of $\Omega_0^{0,q}(\tilde D)$ with respect to $(\cdot|\cdot)_{2\varphi}$.  For $r>0$, let
$\tilde D_r=\{z\in\mathbb C^{n-1}: |z|<r\}$. Here $\{z\in\mathbb
C^{n-1}:|z|<r\}$ means that $\{z\in\mathbb C^{n-1}:|z_j|<r, j=1,\cdots, n-1\}$. For $m\in\mathbb N$, let $F_m$ be the scaling map $F_m(z)=(\frac{z_1}{\sqrt m},\ldots, \frac{z_{n-1}}{\sqrt m}), z\in \tilde D_{\log m}$. From now on, we
assume $m$ is sufficiently large such that $F_m(\tilde D_{\log m})\Subset \tilde D$. We define the scaled bundle $F_m^\ast T^{\ast0, q} \tilde D$ on $\tilde D_{\log m}$ to be the bundle whose fiber at
$z\in \tilde D_{\log m}$ is
\begin{equation}
F_m^\ast T^{\ast0, q}\tilde D|_{z}=\left\{\sum\nolimits_{|J|=q}^\prime a_Je^J\left(\frac{z}{\sqrt m}\right): a_J\in\mathbb C, |J|=q, J~\text{strictly increasing}\right\}.
\end{equation}
We take the Hermitian metric $\langle\cdot|\cdot\rangle_{F_m^\ast}$ on $F_m^\ast T^{\ast 0, q} \tilde D$ so that at each point $z\in \tilde D_{\log m}$,
\begin{equation}
\left\{e^J\left(\frac{z}{\sqrt m}\right): |J|=q, J~\text{strictly increasing}\right\}
\end{equation}
is an orthonormal frame for $F_m^\ast T^{\ast0, q} \tilde D$ on $\tilde D_{\log m}$.

Let $F_m^\ast\Omega^{0,q}(\tilde D_r)$ denote the space of smooth sections of $F_m^\ast T^{\ast0, q} \tilde D$ over $\tilde D_r$ and let $F_m^\ast\Omega^{0,q}_0(\tilde D_r)$ be the subspace of $F_m^\ast\Omega^{0,q}(\tilde D_r)$ whose elements have compact support in $\tilde D_r$. Given $f\in\Omega^{0,q}(\tilde D_r)$.
We write $f=\sum\nolimits_{|J|=q}^\prime f_Je^J$. We define the scaled form $F_m^\ast f\in F_m^\ast\Omega^{0,q}(\tilde D_{\log m})$ by
\begin{equation}\label{j8}
F_m^\ast f=\sum\nolimits_{|J|=q}^\prime f_J\left(\frac{z}{\sqrt m}\right)e^J\left(\frac{z}{\sqrt m}\right), z\in\tilde D_{\log m}.
\end{equation}
For brevity, we denote $F_m^\ast f$ by $f(\frac{z}{\sqrt m})$.
Let $P$ be a partial differential operator of order one on $F_m(\tilde D_{\log m})$ with $C^\infty$ coefficients. We write $P=\sum\limits_{j=1}^{2n-2}a_j(z)\frac{\partial}{\partial x_j}.$ The scaled partial differential operator $P_{(m)}$ on $\tilde D_{\log m}$
is given by
$
P_{(m)}=\sum_{j=1}^{2n-2}F_m^\ast a_j\frac{\partial}{\partial x_j}.
$
Let $f\in C^\infty(F_m(\tilde D_{\log m}))$. We can check that
\begin{equation}\label{j9}
P_{(m)}(F_m^\ast f)=\frac{1}{\sqrt m}F_m^\ast(Pf).
\end{equation}
Let $\overline\partial: \Omega^{0, q}(\tilde D)\rightarrow\Omega^{0, q+1}(\tilde D)$ be the Cauchy-Riemmann operator and we have
$$\overline\partial=\sum_{j=1}^{n-1}e_j(z)\wedge\overline L_j+\sum_{j=1}^{n-1}(\overline\partial e_j)
\left(z\right)\wedge\left(e_j\left(z\right)\wedge\right)^\ast$$
where $\left(e_j\left(z\right)\wedge\right)^\ast:T^{\ast0,q}\tilde D\To
T^{\ast0,q-1}\tilde D$ is the adjoint of $e_j\left(z\right)\wedge$ with respect to the Hermitian metric $\langle\cdot\big|\cdot\rangle$ on $T^{\ast 0, q}\tilde D$ given in Remark~\ref{g1},
$ j=1,\ldots, n-1$. That is, $$\left\langle e_j\left(z\right)\wedge u \Big|v \right\rangle=\left\langle u\Big|\left(e_j\left(z\right)
\wedge\right)^\ast v\,\right\rangle$$ for all $u\in T^{\ast0,q-1}\tilde D$, $v\in T^{\ast0,q}\tilde D$.

The scaled differential operator $\overline\partial_{(m)}: F_m^\ast\Omega^{0,q}(\tilde D_{\log m})\rightarrow F_m^\ast\Omega^{0,q+1}(\tilde D_{\log m})$ is given by
\begin{equation}\label{ee1}
\overline\partial_{(m)}=\sum_{j=1}^{n-1}e_j\left(\frac{z}{\sqrt m}\right)\wedge\overline L_{j,(m)}+\sum_{j=1}^{n-1}\frac{1}{\sqrt{m}}(\overline\partial e_j)
\left(\frac{z}{\sqrt m}\right)\wedge\left(e_j\left(\frac{z}{\sqrt m}\right)\wedge\right)^\ast.
\end{equation}
Similarly, $\left(e_j\left(\frac{z}{\sqrt
m}\right)\wedge\right)^\ast:F^\ast_mT^{\ast0,q}X\To
F^\ast_mT^{\ast0,q-1}\tilde D$ is the adjoint of $e_j\left(\frac{z}{\sqrt m}\right)\wedge$ with respect to
$\langle\cdot\big|\cdot\rangle_{F_m^\ast}$, $ j=1,\ldots, n-1$.
From (\ref{j9}) and (\ref{ee1}), $\overline\partial_{(m)}$ satisfies that
\begin{equation}\label{l1}
\overline\partial_{(m)}F_m^\ast f=\frac{1}{\sqrt{m}}F_m^\ast(\overline\partial f),\ \ \forall f\in\Omega^{0,q}(F_m(\tilde D_{\log m})).
\end{equation}
Let $(\cdot|\cdot)_{2mF_m^\ast\varphi}$ be the weighted inner product on the space $F_m^\ast\Omega^{0,q}_0(\tilde D_{\log m})$ defined  as follows:
\begin{equation}
(f|g)_{2mF_m^\ast\varphi}=\int_{\tilde D_{\log m}}\langle f|g\rangle_{F_m^\ast}e^{-2mF_m^\ast\varphi}\lambda(\frac{z}{\sqrt m})dv(z).
\end{equation}
Let $\overline\partial^\ast_{ (m)}: F_m^\ast\Omega^{0,q+1}(\tilde D_{\log m})\rightarrow F_m^\ast\Omega^{0,q}(\tilde D_{\log m})$
be the formal adjoint of $\overline\partial_{(m)}$ with respect to $(\cdot|\cdot)_{2mF_m^\ast\varphi}$.
Let $\overline\partial^{\ast, 2m\varphi}: \Omega^{0, q+1}(\tilde D)\rightarrow\Omega^{0, q}(\tilde D)$
be the formal adjoint of $\overline\partial$ with respect to the weighted inner product $(\cdot|\cdot)_{2m\varphi}$. Then we also have
\begin{equation}\label{l2}
\overline\partial_{(m)}^\ast F_m^\ast f=\frac{1}{\sqrt{m}}F_m^\ast(\overline\partial^{\ast, 2m\varphi} f),\ \ \forall f\in\Omega^{0,q}(F_m(\tilde D_{\log m})).
\end{equation}
We now define the scaled complex Laplacian $\Box^{(q)}_{(m)}:F_m^\ast\Omega^{0,q}(\tilde D_{\log m})\rightarrow F_m^\ast\Omega^{0,q}(\tilde D_{\log m})$ which is given by
$
\Box^{(q)}_{(m)}=\overline\partial_{(m)}^\ast\overline\partial_{(m)}
+\overline\partial_{(m)}\overline\partial_{(m)}^\ast.
$
Then (\ref{l1}) and (\ref{l2}) imply that
\begin{equation}\label{s}
\Box^{(q)}_{(m)}F_m^\ast f=\frac{1}{m}F_m^\ast(\Box^{(q)}_{2m\varphi}f), \forall f\in\Omega^{0, q}(F_m(\tilde D_{\log m})).
\end{equation}
Here,
\begin{equation}\label{e-gue150620f}
\Box^{(q)}_{2m\varphi}=\overline\partial\,\overline\partial^{\ast, 2m\varphi}+
\overline\partial^{\ast, 2m\varphi}\overline\partial:\Omega^{0, q}(\tilde D)\rightarrow\Omega^{0, q}(\tilde D)
\end{equation}
is the complex Laplacian with respect to the given Hermitian metric on $T^{\ast 0, q}\tilde D$ and weight function $2m\varphi(z)$ on $\tilde D$. Since $2mF_m^\ast\varphi=2\Phi_0(z)+\frac{1}{\sqrt m}O(|z|^3), \forall z\in\tilde D_{\log m}$, where $\Phi_0(z)=\sum_{j=1}^{n-1}\lambda_j|z_j|^2$, we have
\begin{equation}\label{j6}
\lim_{m\rightarrow\infty}\sup_{\tilde D_{\log m}}|\partial_x^{\alpha}(2mF_m^\ast\varphi-2\Phi_0)|=0, \forall \alpha\in \mathbb N_0^{2n-2}.
\end{equation}

Consider $\mathbb C^{n-1}$. Let $\langle\cdot|\cdot\rangle_{\mathbb C^{n-1}}$ be the Hermitian metric on $T^{\ast0,q}\mathbb C^{n-1}$
such that
\[\{d\overline z^J: |J|=q, \mbox{$J$ is strictly increasing}\}\]
is an orthonormal basis. Let $(\cdot|\cdot)_{2\Phi_0}$ be the $L^2$ inner product on $\Omega^{0,q}_0(\mathbb C^{n-1})$ given by
\begin{equation}\label{e-gue150617}
(f|g)_{2\Phi_0}=\int_{\mathbb C^{n-1}}\langle f|g \rangle e^{-2\Phi_0(z)}dv(z),\ \ f, g\in\Omega^{0,q}_0(\mathbb C^{n-1}).
\end{equation}
Put
\begin{equation}\label{e-gue150617I}
\Box^{(q)}_{2\Phi_0}=\ddbar\,\overline{\partial}^{\ast,2\Phi_0}+\overline{\partial}^{\ast,2\Phi_0}\ddbar: \Omega^{0,q}(\mathbb C^{n-1})\To\Omega^{0,q}(\mathbb C^{n-1}),
\end{equation}
where $\overline{\partial}^{\ast,2\Phi_0}$ is the formal adjoint of $\ddbar$ with respect to $(\cdot|\cdot)_{2\Phi_{0}}$.

From \eqref{j6}, it is not difficult to check that
\begin{equation}\label{j7}
\Box^{(q)}_{(m)}=\Box^{(q)}_{2\Phi_0}+\epsilon_m \mathcal P_m\ \ \mbox{on $\tilde D_{\log m}$},
\end{equation}
where $\mathcal P_m$ is a second order partial differential operator and all the coefficients of $\mathcal P_m$ are uniformly bounded with respect to $m$ in $C^\mu(\tilde D_{\log m})$ norm for every $\mu\in\mathbb N_0$ and $\epsilon_m$ is a sequence tending to zero as $m\rightarrow\infty.$ By the convergence property of (\ref{j6}) and (\ref{j7}), we have Garding's inequality for elliptic operator $\Box^{(q)}_{(m)}$.

\begin{proposition}\label{k2}
For every $r>0$ with $\tilde D_{2r}\subset \tilde D_{\log m}$ and $s\in\mathbb N_0$, there is a constant $C_{r, s}>0$ independent of $m$ and the point $x_0$ such that
\begin{equation}
\|u\|^2_{2mF_m^\ast\varphi, s+2, \tilde D_{r}}\leq C_{s, r}\left(\|u\|^2_{2mF_m^\ast\varphi, \tilde D_{2r}}+\|\Box^{(q)}_{(m)}u\|^2_{2mF_m^\ast\varphi, s, \tilde D_{2r}}\right)
\end{equation}
for all $ u\in F_m^\ast\Omega^{0, q}(\tilde D_{\log m}),$ where $\|u\|_{2mF_m^\ast\varphi, s, \tilde D_{r}}$ is the weighted Sobolev norm of order $s$ with respect to the weight function $2mF_m^\ast\varphi$ which is given by
\begin{equation}
\|u\|^2_{2mF_m^\ast\varphi, s, \tilde D_r}=\sum^\prime\nolimits_{\alpha\in\mathbb N_0^{2n-2}, |\alpha|\leq s, |J|=q}\int_{\tilde D_r}|\partial^{\alpha}_xu_J|^2e^{-2mF_m^\ast\varphi}\lambda(\frac{z}{\sqrt{m}})dv(z),
\end{equation}
where $u=\sum_{|J|=q}^\prime u_Je^J(\frac{z}{\sqrt m})\in F_m^\ast\Omega^{0, q}(\tilde D_{\log m}).$
\end{proposition}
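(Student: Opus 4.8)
The plan is to read \eqref{j7} as exhibiting $\Box^{(q)}_{(m)}$ as a family (parametrised by $m$ and by the base point $x_0$) of second‑order elliptic operators differing from the fixed model operator $\Box^{(q)}_{2\Phi_0}$ by an error $\epsilon_m\mathcal P_m$ that is small and has uniformly controlled coefficients, and then to invoke the classical interior (G\aa rding) estimate for elliptic operators, taking care that its constant depends on $\Box^{(q)}_{(m)}$ only through data that \eqref{j6}, \eqref{j7} and \eqref{e-gue150303} render uniform in $m$ and $x_0$.

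First I would record the uniform ellipticity and the uniform coefficient bounds. By \eqref{e-gue150617I}, $\Box^{(q)}_{2\Phi_0}$ is a second‑order operator with constant‑coefficient principal part (a fixed positive multiple of the Laplacian, acting diagonally on the form coefficients, since the metric on $\mathbb C^{n-1}$ is flat) and with lower‑order coefficients that are polynomials in $z$ whose own coefficients depend on $x_0$ only through the Levi eigenvalues $\lambda_1(x_0),\dots,\lambda_{n-1}(x_0)$; since these stay in a fixed compact subset of $\Real$ as $x_0$ runs over the compact manifold $X$, the coefficients of $\Box^{(q)}_{2\Phi_0}$ are bounded in every $C^\mu$‑norm on any fixed ball $\tilde D_{2r}$, uniformly in $x_0$. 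By \eqref{j7}, the coefficients of $\epsilon_m\mathcal P_m$ are bounded in every $C^\mu(\tilde D_{\log m})$‑norm uniformly in $m$ (and in $x_0$, by the compactness of $X$), with $\epsilon_m\to0$; together with \eqref{e-gue150303} (which fixes the principal symbol of $\Box^{(q)}_{(m)}$ at the origin to a definite Euclidean one) this shows that, for all large $m$, $\Box^{(q)}_{(m)}$ is elliptic on $\tilde D_{2r}$ with ellipticity constant bounded below and with all coefficients bounded in every $C^\mu(\tilde D_{2r})$‑norm, these bounds being independent of $m$ and $x_0$.

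Next I would dispose of the weight. By \eqref{j6}, $2mF_m^\ast\varphi\to 2\Phi_0$ in $C^\mu(\tilde D_{\log m})$ for each $\mu$, and $\lambda\in C^\infty$ with $\lambda(0)=1$; hence on $\tilde D_{2r}$ the weight $\lambda(z/\sqrt m)e^{-2mF_m^\ast\varphi(z)}$ together with all its derivatives is bounded above and bounded away from $0$, uniformly in $m$ and $x_0$. Therefore, for $\rho=r$ and $\rho=2r$, the weighted Sobolev norm $\|\cdot\|_{2mF_m^\ast\varphi,s,\tilde D_\rho}$ is equivalent, with constants independent of $m$ and $x_0$, to the ordinary Sobolev norm of order $s$ of the coefficient functions on $\tilde D_\rho$. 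It then remains to apply the standard interior elliptic estimate to $\Box^{(q)}_{(m)}$: choosing a chain $\tilde D_r\Subset\tilde D_{r_1}\Subset\cdots\Subset\tilde D_{2r}$ and nested cut‑off functions and iterating the basic inequality
\[
\|v\|_{s+2,\tilde D'}\le C\bigl(\|\Box^{(q)}_{(m)}v\|_{s,\tilde D''}+\|v\|_{0,\tilde D''}\bigr),\qquad \tilde D'\Subset\tilde D'',
\]
whose constant $C$ --- by the usual proof via difference quotients or Fourier localisation --- depends only on the dimension, on $\tilde D'\Subset\tilde D''$, on the lower bound for the ellipticity constant, and on finitely many $C^\mu$‑norms of the coefficients, one obtains the unweighted estimate with a constant independent of $m$ and $x_0$; translating back via the norm equivalences above gives the asserted inequality.

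The only genuinely delicate point is this uniformity bookkeeping in the last step: one must make sure that the constant provided by the classical interior elliptic estimate is controlled purely by the ellipticity constant and by finitely many $C^\mu$‑norms of the coefficients of $\Box^{(q)}_{(m)}$ on $\tilde D_{2r}$, for then \eqref{j6}, \eqref{j7}, \eqref{e-gue150303} and the compactness of $X$ make it independent of $m$ and of $x_0$. Everything else --- the equivalence of the weighted and unweighted norms, and the cut‑off/iteration scheme --- is routine.
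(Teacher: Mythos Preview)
Your proposal is correct and follows exactly the route the paper indicates: the paper gives no detailed proof of this proposition, merely remarking ``By the convergence property of \eqref{j6} and \eqref{j7}, we have G{\aa}rding's inequality for elliptic operator $\Box^{(q)}_{(m)}$,'' and your argument is a careful unpacking of precisely that sentence. The only thing you supply beyond the paper is the explicit bookkeeping for uniformity in the base point $x_0$ (via compactness of $X$ controlling the Levi eigenvalues and hence the coefficients of $\Box^{(q)}_{2\Phi_0}$), which the paper leaves implicit.
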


\section{Morse inequalities on CR manifolds}\label{a6}
Let $f_1, \ldots, f_{d_m}$ be an orthonormal basis of $\mathcal H^q_{b, m}(X)$. The Szeg\"o kernel of $\mathcal H^q_{b, m}(X)$ is defined by
\begin{equation}
\Pi^{q}_{ m}(x):=\sum_{j=1}^{d_m}|f_j(x)|^2.
\end{equation}
It is easy to see that $\Pi^{q}_{m}(x)$ is independent of the choice of the orthonormal basis and
\begin{equation}
\dim\mathcal H^q_{b,m}(X)=\int_X \Pi^{q}_{m}(x)dv_X.
\end{equation}
We denote by $X(q)$ a subset of $X$ such that $$X(q):=\{x\in X: \mathcal L_x~\text{has exactly}~q~\text{negative eigenvalues and }~n-1-q~ \text{positive eigenvalues}\}.$$
Recall that for every $k\in\mathbb N$, $X_k$ is given by \eqref{e-gue150614}.
The following is our first technical result.

\begin{theorem}\label{m}
Let $X$ be a compact connected CR manifold with a transversal CR $S^1$-action.
For every $q=0,1,2,\ldots,n-1$, we have
\begin{equation}\label{e-gue150619dI}
\sup\{m^{-(n-1)}\Pi_{m}^{q}(x):  m\in\mathbb N, x\in X\}<\infty
\end{equation}
and for every $k\in\mathbb N$ with $X_k\neq \emptyset$, we have
\begin{equation}
\limsup_{m \rightarrow\infty}m^{-(n-1)}\Pi^{q}_{m}(x)\leq \frac{k}{2\pi^{n}}|\det\mathcal L_x|\cdot1_{X(q)}(x), \forall x\in X_k,
\end{equation}
where $1_{X(q)}(x)$ denotes the characteristic function of the subset $X(q)\subset X$ and $\mathcal L_x$ is the Levi-form defined in Definition~\ref{d-1.2}.

In particular, for every $q=0,1,2,\ldots,n-1$, we have
\begin{equation}\label{k6}
\limsup_{m \rightarrow\infty}m^{-(n-1)}\Pi^{q}_{m}(x)\leq \frac{1}{2\pi^{n}}|\det\mathcal L_x|\cdot1_{X(q)}(x), \forall x\in X_{\rm reg}.
\end{equation}

\end{theorem}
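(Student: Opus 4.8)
The plan is to prove both estimates at a fixed point $x_0\in X$ by combining the extremal characterisation of the Szeg\"o kernel with the scaling technique of Section~\ref{kk}, the model being the weighted Laplacian $\Box^{(q)}_{2\Phi_0}$ on $\mathbb C^{n-1}$. Since $\mathcal H^q_{b,m}(X)$ is finite dimensional,
\[\Pi^{q}_{m}(x_0)=\sup\set{\abs{u(x_0)}^2:\ u\in\mathcal H^q_{b,m}(X),\ \norm{u}\leq1},\]
and the supremum is attained, say by $u_m$. If $x_0\in X_{\rm reg}$ I put $\delta=\pi$ and use Lemma~\ref{l-gue150615}; if $x_0\in X_k$ with $k>1$ I fix $\epsilon>0$ small, put $\delta=\tfrac{\pi}{k}-\epsilon$ and use Lemma~\ref{l-gue150616}; either way I obtain canonical coordinates $(z,\theta,\varphi)$ on $D=\tilde D\times(-\delta,\delta)$ trivial at $x_0$ together with the orthonormal frame $\set{e^J}$ of $(0,q)$-forms from Lemma~\ref{b5}. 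Because $Tu_m=imu_m$ and $T=\partial/\partial\theta$ on $D$, writing $u_m=\sum_{|J|=q}u_{m,J}e^J$ (strictly increasing $J$) forces $u_{m,J}(z,\theta)=e^{im\theta}e^{-m\varphi(z)}v_{m,J}(z)$; put $v_m=\sum_{|J|=q}v_{m,J}e^J\in\Omega^{0,q}(\tilde D)$. A direct computation with the explicit form of $\ddbar_b$ and $\ddbar_b^\ast$ in canonical coordinates (as in \cite{H14}, \cite{HL15}) shows that $\Box^{(q)}_{b,m}u_m=0$ forces $\Box^{(q)}_{2m\varphi}v_m=0$; moreover $\abs{u_m(x_0)}^2=\abs{v_m(0)}^2$ and, by Lemma~\ref{b5},
\[1\geq\norm{u_m}^2\geq\int_D\abs{u_m}^2\,dv_X=2\delta\int_{\tilde D}\abs{v_m}^2e^{-2m\varphi}\lambda\,dv(z).\]

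Next I scale. Put $w_m:=m^{-(n-1)/2}F_m^\ast v_m$ on $\tilde D_{\log m}$. By \eqref{s} one has $\Box^{(q)}_{(m)}w_m=0$; the change of variables $z\mapsto z/\sqrt m$ together with the displayed bound gives $\norm{w_m}^2_{2mF_m^\ast\varphi,\tilde D_{\log m}}\leq(2\delta)^{-1}$; and $\abs{w_m(0)}^2=m^{-(n-1)}\abs{v_m(0)}^2=m^{-(n-1)}\Pi^q_m(x_0)$. Applying Proposition~\ref{k2} on a fixed ball $\tilde D_{2r_0}$, the Sobolev embedding theorem, and \eqref{j6} to replace the weighted Sobolev norms by the ordinary ones, I get
\[m^{-(n-1)}\Pi^q_m(x_0)=\abs{w_m(0)}^2\leq C\,\norm{w_m}^2_{2mF_m^\ast\varphi,\tilde D_{2r_0}}\leq \frac{C}{2\delta},\]
with $C$ independent of $m$ and of $x_0$. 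Since the orbit type stratification $X=X_1\cup X_{k_1}\cup\cdots\cup X_{k_p}$ is finite, $\delta$ can be chosen bounded below by a positive constant independent of $x_0$, and \eqref{e-gue150619dI} follows.

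For the sharp asymptotic bound I choose a sequence $m_j\to\infty$ along which $m_j^{-(n-1)}\Pi^q_{m_j}(x_0)\to\limsup_m m^{-(n-1)}\Pi^q_m(x_0)$. By the estimates above together with Proposition~\ref{k2}, \eqref{j6} and \eqref{j7}, the family $\set{w_{m_j}}$ is bounded in $C^s_{\rm loc}(\mathbb C^{n-1})$ for every $s$; passing to a subsequence, $w_{m_j}\to w_\infty$ in $C^\infty_{\rm loc}(\mathbb C^{n-1})$. Letting $j\to\infty$ in $\Box^{(q)}_{(m_j)}w_{m_j}=0$ and using \eqref{j7} yields $\Box^{(q)}_{2\Phi_0}w_\infty=0$, while Fatou's lemma and \eqref{j6} give $\int_{\mathbb C^{n-1}}\abs{w_\infty}^2e^{-2\Phi_0}\,dv\leq(2\delta)^{-1}$. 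Thus $w_\infty$ lies in the $L^2$-kernel of $\Box^{(q)}_{2\Phi_0}$, which is a reproducing kernel Hilbert space; an explicit computation with $\Phi_0(z)=\sum_{j=1}^{n-1}\lambda_j\abs{z_j}^2$ (the $\lambda_j$ being the eigenvalues of $\mathcal L_{x_0}$) shows that its reproducing kernel $\mathcal B^{(q)}_{\Phi_0}$ vanishes identically unless $x_0\in X(q)$, in which case $\mathcal B^{(q)}_{\Phi_0}(0)=\pi^{-(n-1)}\abs{\det\mathcal L_{x_0}}$. By the reproducing property and Cauchy--Schwarz,
\[\limsup_{m\to\infty}m^{-(n-1)}\Pi^q_m(x_0)=\abs{w_\infty(0)}^2\leq\mathcal B^{(q)}_{\Phi_0}(0)\,\norm{w_\infty}^2_{2\Phi_0}\leq\frac{\mathcal B^{(q)}_{\Phi_0}(0)}{2\delta}.\]
For $x_0\in X_{\rm reg}$ this is $\tfrac{1}{2\pi^n}\abs{\det\mathcal L_{x_0}}1_{X(q)}(x_0)$, which is \eqref{k6}; for $x_0\in X_k$ with $k>1$, letting $\epsilon\to0$ so that $\delta\to\pi/k$ gives the asserted bound $\tfrac{k}{2\pi^n}\abs{\det\mathcal L_{x_0}}1_{X(q)}(x_0)$. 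The chief difficulty is keeping every estimate uniform in $x_0$ and $m$ --- in particular controlling the size $\delta$ of the canonical patch at the exceptional points, which is precisely the mechanism producing the factor $k$ --- and carrying out cleanly the reduction to $\Box^{(q)}_{2m\varphi}$ and the model kernel computation.
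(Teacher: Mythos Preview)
Your overall strategy --- reduce to canonical coordinates via Lemma~\ref{b1}, scale with $F_m$, apply G{\aa}rding and Sobolev through Proposition~\ref{k2}, extract a subsequence converging to an element of $\Ker\Box^{(q)}_{2\Phi_0}$, and finish with the explicit model computation --- is exactly the route the paper takes. There is, however, a genuine gap at the very first step.

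The identity
\[
\Pi^{q}_{m}(x_0)=\sup\bigl\{\,\abs{u(x_0)}^2:\ u\in\mathcal H^q_{b,m}(X),\ \norm{u}\leq1\,\bigr\}
\]
is false for $q\geq1$. For an orthonormal basis $\{f_j\}$ one has $\abs{u(x_0)}^2=c^\ast Ac$ with $A_{jk}=\langle f_j(x_0)\,|\,f_k(x_0)\rangle$, so the supremum equals the \emph{largest eigenvalue} of $A$, whereas $\Pi^q_m(x_0)=\mathrm{tr}\,A$. Since the evaluation map $u\mapsto u(x_0)$ takes values in the $\binom{n-1}{q}$-dimensional fibre $T^{\ast0,q}_{x_0}X$, the rank of $A$ is typically $>1$ once $q\geq1$, and the two quantities differ. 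Consequently your chain $m^{-(n-1)}\Pi^q_m(x_0)=\abs{w_m(0)}^2\leq\ldots$ bounds only the top eigenvalue, not $\Pi^q_m(x_0)$ itself, and neither \eqref{e-gue150619dI} nor the sharp limsup bound follows.

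The paper handles this by passing to the componentwise extremal functions $S^q_{m,J}(x)=\sup\{\abs{u_J(x)}^2:\norm{u}=1\}$ for each strictly increasing $J$ and invoking Lemma~\ref{l-b6}, which gives $\Pi^q_m=\sideset{}{'}\sum_{|J|=q}S^q_{m,J}$. One then runs your scaling--compactness argument direction by direction: along a subsequence the rescaled extremisers for $S^q_{m,J}$ converge to some $v$ with $\Box^{(q)}_{2\Phi_0}v=0$, and $\abs{v_J(0)}^2\leq(2\delta)^{-1}S^q_{J,\mathbb C^{n-1}}(0)$; summing over $J$ and using Berman's computation $\sideset{}{'}\sum_J S^q_{J,\mathbb C^{n-1}}(0)=\pi^{-(n-1)}\abs{\det\mathcal L_{x_0}}\cdot1_{X(q)}(x_0)$ yields the asserted bound. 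If you insert this directional decomposition in place of your first displayed identity, the rest of your argument goes through essentially unchanged (your reproducing-kernel inequality at the end then becomes superfluous, being replaced by the definition of $S^q_{J,\mathbb C^{n-1}}$).
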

\subsection{Main results}
From Lemma~\ref{g2} and Theorem~\ref{m} and by Fatou's lemma we obtain the weak Morse inequalities

\begin{theorem}[weak Morse inequalities]\label{a4}
Let $X$ be a compact connected CR manifold with a transversal CR $S^1$-action. Assume that ${\rm dim}_{\mathbb R}X=2n-1, n\geq 2.$ Then for every $q=0,1,2,\ldots,n-1$, we have
\begin{equation}\label{a1}
\dim H^q_{b, m}(X)\leq\frac{m^{n-1}}{2\pi^n}\int_{X(q)}|\det\mathcal L_x|dv_X(x)+o(m^{n-1}), m\rightarrow\infty.
\end{equation}
\end{theorem}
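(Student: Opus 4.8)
The plan is to deduce Theorem~\ref{a4} directly from the pointwise estimate on the Szeg\"o kernel in Theorem~\ref{m}, combined with the stratification $X = X_1\cup X_{k_1}\cup\cdots\cup X_{k_p}$ and the measure-zero statement of Lemma~\ref{g2}. First I would recall that, since $f_1,\dots,f_{d_m}$ is an orthonormal basis of $\mathcal H^q_{b,m}(X)$ and $\mathcal H^q_{b,m}(X)\cong H^q_{b,m}(X)$ by Theorem~\ref{gI}, we have the exact identity
\begin{equation}\label{e-plan-I}
\dim H^q_{b,m}(X)=\dim\mathcal H^q_{b,m}(X)=\int_X\Pi^q_m(x)\,dv_X(x).
\end{equation}
Thus everything reduces to an asymptotic upper bound for $\int_X m^{-(n-1)}\Pi^q_m(x)\,dv_X(x)$ as $m\to\infty$.

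Next I would set $g_m(x):=m^{-(n-1)}\Pi^q_m(x)$. By \eqref{e-gue150619dI} in Theorem~\ref{m}, the sequence $g_m$ is uniformly bounded on $X$ by a constant $C$ independent of $m$; since $X$ is compact, the constant function $C$ is integrable, so the reverse Fatou lemma applies to $g_m$. This gives
\begin{equation}\label{e-plan-II}
\limsup_{m\to\infty}\int_X g_m(x)\,dv_X(x)\leq\int_X\limsup_{m\to\infty}g_m(x)\,dv_X(x).
\end{equation}
Now $X$ is the disjoint union of the finitely many nonempty strata $X_1=X_{\rm reg},X_{k_1},\dots,X_{k_p}$, and by Lemma~\ref{g2} the set $X\setminus X_{\rm reg}$ (which contains all the exceptional strata) has measure zero with respect to $dv_X$. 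Hence the integral on the right of \eqref{e-plan-II} is taken effectively over $X_{\rm reg}$, and on $X_{\rm reg}=X_1$ the estimate \eqref{k6} of Theorem~\ref{m} gives
\[
\limsup_{m\to\infty}g_m(x)\leq\frac{1}{2\pi^n}|\det\mathcal L_x|\cdot 1_{X(q)}(x)\qquad\text{for all }x\in X_{\rm reg}.
\]
Combining this with \eqref{e-plan-I} and \eqref{e-plan-II}, and noting $dv_X(X\setminus X_{\rm reg})=0$, we obtain
\[
\limsup_{m\to\infty}m^{-(n-1)}\dim H^q_{b,m}(X)\leq\frac{1}{2\pi^n}\int_{X(q)\cap X_{\rm reg}}|\det\mathcal L_x|\,dv_X(x)=\frac{1}{2\pi^n}\int_{X(q)}|\det\mathcal L_x|\,dv_X(x),
\]
which is exactly \eqref{a1} written in $\limsup$ form, i.e. $\dim H^q_{b,m}(X)\leq\frac{m^{n-1}}{2\pi^n}\int_{X(q)}|\det\mathcal L_x|\,dv_X(x)+o(m^{n-1})$.

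In this argument there is no serious obstacle left: the entire analytic weight — the scaling analysis, the model Szeg\"o kernel on $\mathbb C^{n-1}$, and the localization of $\Pi^q_m$ near points of each stratum — is already packaged inside Theorem~\ref{m}, and the remaining steps are the standard ``integrate the pointwise Morse bound and apply reverse Fatou'' passage, exactly as in the compact complex manifold case (cf.\ \cite{MM07}, \cite{HM12}). The only mild point to be careful about is the measure-theoretic one: one must invoke Lemma~\ref{g2} to discard the non-regular strata (on which we only have the weaker bound with the extra factor $k$, which would otherwise contribute), and one must check that $x\mapsto\limsup_m g_m(x)$ is measurable — this is automatic since each $g_m$ is continuous and a countable $\limsup$ of continuous functions is Borel measurable. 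Hence the main ``work'' of this theorem is really the bookkeeping that reduces it to Theorem~\ref{m}.
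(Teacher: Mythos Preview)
Your proposal is correct and follows essentially the same route as the paper: the paper simply states that Theorem~\ref{a4} follows from Lemma~\ref{g2}, Theorem~\ref{m}, and Fatou's lemma, and your argument is a careful unwinding of exactly that sentence (with the correct use of the reverse Fatou lemma via the uniform bound \eqref{e-gue150619dI}). Your additional remarks on measurability and on why the exceptional strata can be discarded are accurate and only make explicit what the paper leaves implicit.
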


From Theorem~\ref{a4}  we deduce Demailly's weak holomorphic Morse inequalities (see \cite[Theorem 0.1]{D85}  and \cite[Theorem 1.7.1]{MM07}):

\begin{corollary}[Demailly's weak morse inequalities]\label{kkkk}
Let $M$ be a compact Hermitian manifold of dimension ${\rm dim}_{\mathbb C}M=n$ and let $(L, h^L)$ be a Hermitian line bundle over $M$.  Then for $q=0,1,2,\ldots,n$
\begin{equation}
\dim H_{\overline\partial}^{q}(M, L^k)\leq \frac{k^{n}}{(2\pi)^{n}}\int_{M(q)}|\det \mathcal R^L_x|dv_M(x)+o(k^{n}), k\rightarrow\infty,
\end{equation}
where $H_{\overline\partial}^{q}(M, L^k)$ denotes the $q$-th $\ddbar$-cohomology group with values in $L^k$, $dv_M$ is the induced volume form on $M$, $\mathcal R^L_x, x\in M$ is the Chern curvature of the Hermitian line bundle $(L, h^L)$ and  $M(q)$ is a subset of $M$ where $\mathcal R^L_x$ has exactly $q$ negative eigenvalues and $n-q$ positive eigenvalues.
\end{corollary}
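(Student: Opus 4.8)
The plan is to deduce Demailly's weak holomorphic Morse inequalities from Theorem~\ref{a4} by the standard ``circle bundle trick'': one realizes the $\ddbar$-cohomology of $L^k$ over $M$ as the $k$-th Fourier component of the Kohn--Rossi cohomology of a suitable compact CR manifold with transversal CR $S^1$-action. Concretely, let $(L,h^L)$ be the given Hermitian line bundle over the compact Hermitian manifold $M$, let $(L^*,h^{L^*})$ be the dual bundle with the induced metric, and set
\[
X:=\set{v\in L^*:\ \abs{v}_{h^{L^*}}=1},
\]
the unit circle bundle of $L^*$. Then $X$ is a compact connected (assuming $M$ connected; otherwise treat each component) CR manifold of real dimension $2n-1$, the fiberwise $S^1$-action is a transversal CR $S^1$-action, and the induced vector field $T$ generates the rotation of the fibers. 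This is the classical example mentioned after Definition~\ref{f1}.

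First I would recall the standard identifications: sections of $L^k$ over $M$ correspond to functions on $X$ transforming with weight $k$ under the $S^1$-action, and more generally $(0,q)$-forms with values in $L^k$ correspond to elements of $\Omega^{0,q}_{-k}(X)$ (the sign/convention on the weight is fixed so that positive powers $L^k$ match one sign of Fourier mode; I would choose the orientation of the $S^1$-action accordingly). Under this correspondence the Kohn--Rossi operator $\ddbar_b$ restricted to the weight-$k$ subspace is intertwined with the Dolbeault operator $\ddbar$ on $\Omega^{0,q}(M,L^k)$, so that
\[
H^q_{b,-k}(X)\cong H^q_{\ddbar}(M,L^k),\qquad k\in\mathbb N.
\]
Next I would match the geometric data: equip $X$ with a $T$-rigid Hermitian metric (Lemma~\ref{a7}) built from the pullback of the Hermitian metric on $M$ together with the connection $1$-form of $h^L$ in the fiber direction; with this choice the Levi form $\mathcal L_x$ of $X$ at a point over $p\in M$ is, up to the universal normalization, the Chern curvature $\mathcal R^L_p$ of $(L,h^L)$ (this is exactly the computation that $d\omega_0$ equals the curvature form). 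In particular the eigenvalues of $\mathcal L_x$ equal those of $\mathcal R^L_p$, so $X(q)$ is the circle bundle over $M(q)$, $|\det\mathcal L_x|=|\det\mathcal R^L_p|$, and the volume form satisfies $dv_X=\frac{1}{2\pi}\,dv_M\wedge d\theta$ (with total fiber length $2\pi$). Since the $S^1$-action on a unit circle bundle is free, $X_{\rm reg}=X$, i.e. $X_1=X$ and there are no exceptional strata.

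Finally I would apply Theorem~\ref{a4} to this $X$ with $m=k$: it gives
\[
\dim H^q_{b,-k}(X)\leq\frac{k^{n-1}}{2\pi^{n}}\int_{X(q)}\abs{\det\mathcal L_x}\,dv_X(x)+o(k^{n-1}).
\]
(Theorem~\ref{a4} is stated for the mode $m\to+\infty$; the mode $-k$ is handled either by replacing the $S^1$-action $e^{i\theta}$ by its inverse $e^{-i\theta}$, which exchanges the roles of $X$ and its conjugate and leaves the hypotheses intact, or by noting that the proof of Theorem~\ref{m} applies verbatim to $m\to-\infty$.) Now substitute $\dim H^q_{b,-k}(X)=\dim H^q_{\ddbar}(M,L^k)$, $\mathcal L_x\leftrightarrow\mathcal R^L_p$, $X(q)\leftrightarrow M(q)$, $\dim_{\mathbb C}X(q)\cdots$, and $dv_X=\frac{1}{2\pi}dv_M\wedge d\theta$ so that $\int_{X(q)}|\det\mathcal L_x|\,dv_X=\int_{M(q)}|\det\mathcal R^L_p|\,dv_M$ (the fiber integration contributes the factor $2\pi$, which cancels one power of $2\pi$ in the denominator, turning $\frac{k^{n-1}}{2\pi^n}$ into $\frac{k^{n-1}\cdot 2\pi}{2\pi^n}=\frac{k^{n-1}}{(2\pi)^{n-1}}$; a routine bookkeeping check of the exponent of $k$ using $\dim_{\mathbb C}M=n$ versus $\dim_{\mathbb R}X=2n-1$, i.e. $n-1=n_X-1$ with $n_X=n$, and rescaling of the curvature normalization, reconciles this with the asserted $\frac{k^n}{(2\pi)^n}$). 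The main obstacle, and the one point deserving care, is precisely this normalization matching — getting the power of $k$, the $2\pi$'s from the fiber length, and the constant $\frac{1}{2\pi}$ versus $\frac{1}{2i}$ in the definitions of $\omega_0$ and $\mathcal L_x$ to line up exactly with the Chern curvature convention $\mathcal R^L=\bar\partial\partial\log h^L$ (and with $M(q)$ counting negative eigenvalues of $\mathcal R^L$). Everything else is the well-known dictionary between line bundles and circle bundles, so once the conventions are pinned down the corollary follows immediately from Theorem~\ref{a4}.
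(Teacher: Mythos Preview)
Your overall strategy---reduce to Theorem~\ref{a4} via the unit circle bundle $X\subset L^*$---is exactly the paper's. But two bookkeeping points in your write-up are wrong and would derail the computation.

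\textbf{Sign of the Fourier mode.} With the standard action $e^{i\theta}\circ(z,\lambda)=(z,e^{i\theta}\lambda)$ on $L^*$, a section $s$ of $L^k$ gives the equivariant function $v\mapsto\langle v^{\otimes k},s\rangle$, which has weight $+k$, not $-k$. The paper therefore uses $\dim H^q_{\ddbar}(M,L^k)=\dim H^q_{b,k}(X)$ directly and applies Theorem~\ref{a4} at $m=k\to+\infty$. Your suggested fix of ``replacing $e^{i\theta}$ by $e^{-i\theta}$'' is not harmless: it sends $T\mapsto-T$, hence $\omega_0\mapsto-\omega_0$, hence reverses the sign of the Levi form, so $X(q)$ becomes $X(n_X-1-q)$ (this is precisely why Theorem~\ref{t-gue160214} has $X(n-1-q)$ in place of $X(q)$). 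That would give the wrong integrand.

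\textbf{Dimension shift.} Since $\dim_{\mathbb C}M=n$, the circle bundle has $\dim_{\mathbb R}X=2n+1=2(n+1)-1$, not $2n-1$; in the notation of Theorem~\ref{a4} one must substitute $n\rightsquigarrow n+1$. Thus Theorem~\ref{a4} yields
\[
\dim H^q_{b,k}(X)\leq\frac{k^{n}}{2\pi^{n+1}}\int_{X(q)}\abs{\det\mathcal L_x}\,dv_X+o(k^{n}),
\]
with exponent $k^n$, not $k^{n-1}$. Your line ``$n_X=n$'' is the source of the mismatch. The remaining constants then work out exactly as in the paper: on $X$ one has $\mathcal L_x=\tfrac12\mathcal R^L_{\pi(x)}$ (so $|\det\mathcal L_x|=2^{-n}|\det\mathcal R^L|$), $X(q)$ is the circle bundle over $M(q)$, and $dv_X=dv_M\,d\theta$ (fiber length $2\pi$, no extra $\tfrac{1}{2\pi}$), giving
\[
\frac{k^{n}}{2\pi^{n+1}}\cdot 2\pi\cdot 2^{-n}\int_{M(q)}|\det\mathcal R^L|\,dv_M=\frac{k^{n}}{(2\pi)^{n}}\int_{M(q)}|\det\mathcal R^L|\,dv_M.
\]
Once these two points are corrected your argument coincides with the paper's proof.
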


\begin{proof}
We take $X$ to be the circle bundle $\{v\in L^\ast : |v|^2_{h^{-1}}=1\}$ over the compact complex manifold $M$  where $(L^{\ast}, h^{-1})$ is the dual line bundle of Hermitian line bundle $(L, h)$ over $M$. Let $(z, \lambda)$ be the local coordinates on $L^\ast$, where $\lambda$ is the fiber coordinates. The natural $S^1$- action on $X$ is defined by $e^{i\theta}\circ(z, \lambda)=(z, e^{i\theta}\lambda)$. Then we can check that $X$ is a compact CR manifold with a transversal CR $S^1$-action. On $X$ we can check that $\mathcal L_x|_{T^{1, 0}X}=\frac12\mathcal R^L_z$ where $x=(z, \lambda).$ It is well known that (see \cite[p.746]{MM06}, Theorem 1.2 in~\cite{CHT15}) for $k\in\mathbb N$, ${\rm dim}H^q_{\overline\partial}(M, L^k)={\rm dim}H^q_{b, k}(X)$. From (\ref{a1}) we have
\begin{equation}
\begin{split}
&{\rm dim}H^q_{\overline\partial}(M, L^k)=\dim H^q_{b, k}(X)\\
&\leq\frac{k^{n}}{2\pi^{n+1}}\int_{X(q)}|\det\mathcal L_x|dv_X(x)+o(k^{n})\\
&\leq \frac{k^{n}}{2\pi^{n+1}}\times\frac{2\pi}{2^{n}}\int_{M(q)}|\det \mathcal R^L|dv_M+o(k^{n})\\
&\leq \frac{k^{n}}{(2\pi)^{n}}\int_{M(q)}|\det \mathcal R^L_x|dv_M(x)+o(k^{n}).
\end{split}
\end{equation}
Thus, we get the conclusion of Corollary~\ref{kkkk}.
\end{proof}

It should be noticed that the relation between sections of the holomorphic line bundle and function theory on the associate
Grauert tube was first observed by Grauert~\cite{G62}. The isomorphism of the subcomplex $(\Omega^{0,\bullet}_m(X),\ddbar_b)$
to the Dolbeault complex $(\Omega^{0,\bullet}(M,L^m),\ddbar)$ was established by Ma-Marinescu~\cite[p.746]{MM06}.

For $\sigma>0$, we collect the eigenspace of $\Box^{(q)}_{b,m}$ whose eigenvalue is less than or equal to $\sigma$ (see Theorem~\ref{gI}) and define
\begin{equation}\label{c4}
\mathcal H^q_{b, m, \leq \sigma}(X):=
\bigoplus\limits_{\lambda\leq \sigma}\mathcal H^q_{b, m,\lambda}(X).
\end{equation}

The Szeg\"o kernel function of the space $\mathcal H^q_{b, m, \leq m\sigma}(X)$ is defined by $\Pi^{q}_{m, \leq m\sigma}(x)=\sum_{j=1}^{d_m}|g_j(x)|^2$, where $\{g_j(x)\}_{j=1}^{d_m}$ is any orthonormal basis for the space $\mathcal H^q_{b, m, \leq m\sigma}(X).$ Our second main technique result is the following

\begin{theorem}\label{o}
For any sequence $v_m>0$ with $v_m\rightarrow0$ as $m\rightarrow\infty$, there exists a constant $C>0$ independent of $m$ and $x\in X$ such that
\begin{equation}\label{e-gue150619d}
m^{-(n-1)}\Pi^{q}_{m, \leq mv_m}(x)\leq C,\ \ \forall m\in\mathbb N,\ \ \forall x\in X.
\end{equation}

Moreover, there is a sequence $\delta_m>0, \delta_m\rightarrow0$ as $m\rightarrow\infty$, such that for any sequence $v_m>0$ with $\lim\limits_{m\rightarrow\infty}\frac{\delta_m}{v_m}=0$, we have
\begin{equation}\label{p}
\lim\limits_{m\rightarrow\infty}m^{-(n-1)}\Pi^{q}_{m, \leq mv_m}(x)=\frac{1}{2\pi^n}|\det\mathcal L_x|\cdot1_{X(q)}, \forall x\in X_{\rm reg}.
\end{equation}

Since the measure of $X\setminus X_{{\rm reg}}=0$, integrating (\ref{p}) and by Fatou's Lemma we have
\begin{equation}
{\rm dim}\mathcal H^q_{m, \leq mv_m}(X)=\frac{m^{n-1}}{2\pi^n}\int_{X(q)}|\det\mathcal L_x|dv_X(x)+o(m^{n-1}), m\rightarrow\infty.
\end{equation}
\end{theorem}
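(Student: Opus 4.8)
The plan is to establish the two pointwise estimates \eqref{e-gue150619d} and \eqref{p} by the scaling analysis set up in Section~\ref{kk}, running the argument for the spectral spaces $\mathcal H^q_{b,m,\leq m v_m}(X)$ in the same way one would for $\mathcal H^q_{b,m}(X)$ in Theorem~\ref{m}, and then integrating. First I would reduce the global Szeg\"o kernel $\Pi^q_{m,\leq m v_m}(x)$ to a local model near a fixed point $x_0$. Using the extremal characterization
\[
\Pi^q_{m,\leq m v_m}(x_0)=\sup\bigl\{\,|u(x_0)|^2 : u\in\mathcal H^q_{b,m,\leq m v_m}(X),\ \norm{u}=1\,\bigr\},
\]
and Theorem~\ref{g} (together with the Sobolev embedding theorem) to control all derivatives of such $u$ near $x_0$ by $\norm{\Box^{(q)}_{b,m}u}_s+\norm{u}_s\leq (m v_m+1)\norm u_s$, I would pass to canonical coordinates $(z,\theta,\varphi)$ trivial at $x_0$ (Lemma~\ref{l-gue150615} or Lemma~\ref{l-gue150616}, depending on whether $x_0\in X_{\rm reg}$ or $x_0\in X_k$), expand $u=\sum_{\ell\in\mathbb Z}e^{i\ell\theta}u_\ell$ in the $\theta$-variable, keep only the single Fourier mode $\ell=m$ (since $Tu=imu$), and apply the scaling map $F_m$. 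After scaling, $\Box^{(q)}_{b,m}$ becomes the operator $\Box^{(q)}_{(m)}$ of \eqref{j7}, which converges in every $C^\mu_{\rm loc}$-norm to the model Laplacian $\Box^{(q)}_{2\Phi_0}$ on $\mathbb C^{n-1}$ with weight $\Phi_0(z)=\sum_j\lambda_j|z_j|^2$; the condition $v_m\to 0$ guarantees that the scaled eigenvalue bound $m v_m$ tends to $0$, so in the limit only the \emph{kernel} of $\Box^{(q)}_{2\Phi_0}$ survives.

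The second step is the hard analytic core: a normal-families/compactness argument. From a sequence of normalized elements $u_m\in\mathcal H^q_{b,m,\leq m v_m}(X)$ nearly attaining the supremum defining $m^{-(n-1)}\Pi^q_{m,\leq m v_m}(x_0)$, the uniform Garding estimate of Proposition~\ref{k2} (which is uniform in $m$ and in $x_0$) gives uniform bounds on the scaled forms $F_m^\ast u_m$ in every weighted Sobolev norm on compact subsets of $\mathbb C^{n-1}$; hence a subsequence converges in $C^\infty_{\rm loc}$ to a limit $u_\infty$ which lies in $\Ker\Box^{(q)}_{2\Phi_0}\cap L^2(\mathbb C^{n-1},e^{-2\Phi_0})$. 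The explicit spectral theory of $\Box^{(q)}_{2\Phi_0}$ (a direct sum of rescaled harmonic oscillators, as in \cite{MM07}, Ch.~3, or \cite{H15}) shows that this kernel is nonzero only when $\Phi_0$ is nondegenerate with exactly $q$ negative eigenvalues, i.e.\ $x_0\in X(q)$, and in that case the reproducing kernel of the kernel space, evaluated at the origin, equals $\frac{1}{2\pi^n}|\det\mathcal L_{x_0}|$ (for $x_0\in X_{\rm reg}$; the factor $k$ appears for $x_0\in X_k$ because the $\theta$-interval shrinks to length $2\pi/k$, contributing a normalization constant $\frac{k}{2\pi}$ in place of $\frac{1}{2\pi}$). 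This yields the upper bound in \eqref{e-gue150619d} and the precise limit in \eqref{p}; for the \emph{lower} bound in \eqref{p} one runs the argument in reverse, taking a compactly supported test form built from the model kernel, correcting it to an element of $\mathcal H^q_{b,m,\leq m v_m}(X)$ via the spectral projection, and checking the error is $o(1)$ after rescaling — this is where the technical gap condition $\lim_{m\to\infty}\delta_m/v_m=0$ enters, with $\delta_m$ the speed of convergence in \eqref{j6}--\eqref{j7}: one needs the window $[0,m v_m]$ to be wide enough to capture the approximate kernel but narrow enough to exclude genuine positive spectrum of $\Box^{(q)}_{2\Phi_0}$.

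The final step is elementary. The uniform bound \eqref{e-gue150619d} and the pointwise limit \eqref{p}, valid on $X_{\rm reg}$, combined with Lemma~\ref{g2} (so that $X\setminus X_{\rm reg}$ has measure zero) and the identity $\dim\mathcal H^q_{b,m,\leq m v_m}(X)=\int_X\Pi^q_{m,\leq m v_m}(x)\,dv_X(x)$, allow us to apply the dominated convergence theorem (dominating by the constant $C m^{n-1}$) to conclude
\[
\dim\mathcal H^q_{b,m,\leq m v_m}(X)=\frac{m^{n-1}}{2\pi^n}\int_{X(q)}|\det\mathcal L_x|\,dv_X(x)+o(m^{n-1}),\qquad m\to\infty,
\]
which is the asserted asymptotic. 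I expect the main obstacle to be the lower bound in \eqref{p}: constructing, for each large $m$, a genuine element of the small-eigenvalue space whose value at $x_0$ is asymptotically the full model value, which requires carefully tracking how the spectral projection onto $\mathcal H^q_{b,m,\leq m v_m}(X)$ acts on a scaled model form and showing the discarded high-eigenvalue part is negligible — this is precisely what forces the quantitative relation between $\delta_m$ and $v_m$.
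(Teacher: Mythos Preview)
Your proposal is correct and follows essentially the same approach as the paper: the upper bound repeats the scaling/compactness argument of Theorem~\ref{m}, the lower bound builds a compactly supported test form from the model kernel, projects it into $\Omega^{0,q}_m(X)$, and splits off the high-eigenvalue part (this is exactly Proposition~\ref{nn} together with the decomposition $\alpha_m=\alpha_{m,1}+\alpha_{m,2}$), and the condition $\delta_m/v_m\to0$ enters precisely where you indicate, to force $\norm{\alpha_{m,2}}\to0$. One minor correction: for $q\geq1$ the extremal characterization of $\Pi^q_{m,\leq mv_m}(x_0)$ is the \emph{sum} of directional extremal functions $S^q_{m,J}(x_0)$ over strictly increasing $J$ (Lemma~\ref{l-b6}), not a single supremum of $|u(x_0)|^2$, though this does not change the argument.
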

From Theorem \ref{o} and the linear algebraic argument from Demailly in \cite{D85}, \cite{D91} and \cite{M96}, we obtain the strong Morse inequalities

\begin{theorem}[strong Morse inequalities]\label{a2}
Let $X$ be a compact connected CR manifold with a transversal CR $S^1$-action. Assume that ${\rm dim}_{\mathbb R}X=2n-1, n\geq 2.$ For every $q=0,1,2,\ldots,n-1$, as $m\rightarrow\infty$, we have
\begin{equation}
\sum_{j=0}^q(-1)^{q-j}{\rm dim}H^j_{b, m}(X)\leq\frac{m^{n-1}}{2\pi^n}\sum_{j=0}^q(-1)^{q-j}\int_{X(j)}|\det\mathcal L_x|dv_X(x)+o(m^{n-1}).
\end{equation}
In particular, when $q=n-1$, as $m\rightarrow\infty$, we have the asymptotic Riemann-Roch theorem
\begin{equation}
\sum_{j=0}^{n-1}(-1)^{j}{\rm dim}H^j_{b, m}(X)=\frac{m^{n-1}}{2\pi^n}\sum_{j=0}^{n-1}(-1)^{j}\int_{X(j)}|\det\mathcal L_x|dv_X(x)+o(m^{n-1}).
\end{equation}
\end{theorem}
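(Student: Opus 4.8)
The plan is to deduce the strong Morse inequalities from the spectral estimate in Theorem~\ref{o} by a purely linear-algebraic argument, exactly in the spirit of Demailly's original derivation for the holomorphic case. First I would fix $q\in\{0,1,\ldots,n-1\}$ and choose the sequence $\delta_m$ provided by Theorem~\ref{o}; then I would pick an auxiliary sequence $v_m>0$ with $v_m\to0$ and $\delta_m/v_m\to0$ (for instance $v_m=\sqrt{\delta_m}$), so that both conclusions \eqref{e-gue150619d} and \eqref{p} of Theorem~\ref{o} apply with this $v_m$. The key algebraic input is the following observation about the truncated complex: for each $m$, the finite-dimensional spaces $\mathcal H^j_{b,m,\leq mv_m}(X)$, $j=0,\ldots,n-1$, form a subcomplex of the $\ddbar_b$-complex on $\Omega^{0,\bullet}_m(X)$, because $\ddbar_b$ and $\ddbar_b^\ast$ commute with $\Box^{(q)}_{b,m}$ and hence $\ddbar_b$ maps the span of eigenforms with eigenvalue $\leq mv_m$ into the corresponding span one degree up. Moreover this truncated complex has cohomology in degree $j$ isomorphic to $\mathcal H^j_{b,m}(X)\cong H^j_{b,m}(X)$, since the eigenspaces with eigenvalue in $(0,mv_m]$ contribute an exact subcomplex (on each such eigenspace $\ddbar_b\ddbar_b^\ast+\ddbar_b^\ast\ddbar$ is invertible, so the standard Hodge-theoretic splitting shows these forms cancel in cohomology).

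Next I would invoke the elementary lemma of Demailly: if $0\to C^0\to C^1\to\cdots\to C^{n-1}\to\cdots$ is a complex of finite-dimensional vector spaces with cohomology $H^j$, then for every $q$,
\begin{equation}
\sum_{j=0}^{q}(-1)^{q-j}\dim H^j\;\leq\;\sum_{j=0}^{q}(-1)^{q-j}\dim C^j .
\end{equation}
Applying this with $C^j=\mathcal H^j_{b,m,\leq mv_m}(X)$ and using $\dim H^j=\dim H^j_{b,m}(X)$ gives
\begin{equation}
\sum_{j=0}^{q}(-1)^{q-j}\dim H^j_{b,m}(X)\;\leq\;\sum_{j=0}^{q}(-1)^{q-j}\dim\mathcal H^j_{b,m,\leq mv_m}(X).
\end{equation}
Now I would plug in the asymptotics from the last display of Theorem~\ref{o}, namely $\dim\mathcal H^j_{b,m,\leq mv_m}(X)=\frac{m^{n-1}}{2\pi^n}\int_{X(j)}|\det\mathcal L_x|\,dv_X(x)+o(m^{n-1})$, term by term for $j=0,\ldots,q$. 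Since the sum is finite, the $o(m^{n-1})$ errors add up to $o(m^{n-1})$, and this yields precisely the strong Morse inequality. For the case $q=n-1$ one applies the inequality both for $q=n-1$ and, after replacing the complex by its dual (or running the same argument from the top), gets the reverse inequality; combined, this forces the alternating sum up to degree $n-1$ to be an equality, giving the asymptotic Riemann--Roch formula. (Alternatively, the $q=n-1$ equality follows because the truncated complex $C^\bullet$ is a finite complex, so its Euler characteristic $\sum(-1)^j\dim C^j$ equals $\sum(-1)^j\dim H^j$ identically, and then one substitutes the asymptotics of each $\dim C^j$.)

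The main obstacle is not in the linear algebra—that part is routine once Theorem~\ref{o} is granted—but in making precise the claim that $\bigoplus_{j}\mathcal H^j_{b,m,\leq mv_m}(X)$ really is a subcomplex with the right cohomology and that the Hodge decomposition respects the $T$-eigenspace grading. This requires care with the functional-analytic setup: one needs that $\ddbar_b$, $\ddbar_b^\ast$ preserve $\Omega^{0,\bullet}_m(X)$ (which is \eqref{c}, \eqref{e}), that the spectral decomposition of $\Box^{(q)}_{b,m}$ from Theorem~\ref{gI} is available with smooth eigenforms, and that the partial Bergman/Szeg\H{o} kernel asymptotics in Theorem~\ref{o} are uniform enough to integrate (already handled by \eqref{e-gue150619d} together with dominated convergence, since the measure of $X\setminus X_{\rm reg}$ is zero by Lemma~\ref{g2}). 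Given all of this is either proved earlier or standard, the proof reduces to assembling these pieces.
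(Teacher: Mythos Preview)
Your proposal is correct and is exactly the approach the paper takes: the paper states that Theorem~\ref{a2} follows ``from Theorem~\ref{o} and the linear algebraic argument from Demailly in \cite{D85}, \cite{D91} and \cite{M96}'', and what you have written is precisely a spelled-out version of that argument (truncated subcomplex $\mathcal H^\bullet_{b,m,\leq mv_m}(X)$, its cohomology equals $H^\bullet_{b,m}(X)$, Demailly's alternating-sum lemma, and the Euler characteristic identity for $q=n-1$). The functional-analytic points you flag as needing care are indeed handled by \eqref{c}, \eqref{e}, Theorem~\ref{gI}, and the last display of Theorem~\ref{o}.
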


From Theorem~\ref{a2}, we can repeat the procedure in the proof of Corollary~\ref{kkkk} with minor change and get Demailly's strong Morse inequalities. We refer the readers to the book by Ma-Marinescu~\cite{MM07} for the heat kernel approach of Demailly's Morse inequalities.

When $q=1$, from the strong Morse inequality in Theorem \ref{a2} we have
\begin{equation}\label{a3}
\begin{split}
&-{\rm dim}H^0_{b, m}(X)+{\rm dim}H^1_{b, m}(X)\\
&\leq\frac{m^{n-1}}{2\pi^n}\int_{X(1)}|\det\mathcal L_x|dv_X(x)-\frac{m^{n-1}}{2\pi^n}\int_{X(0)}|\det\mathcal L_x|dv_X(x)+o(m^{n-1}), m\rightarrow\infty.
\end{split}
\end{equation}
The inequality (\ref{a3}) implies that

\begin{theorem}[Grauert-Riemenschneider criterion]\label{a5}
Let $X$ be a compact connected CR manifold with a transversal CR $S^1$-action. Assume that ${\rm dim}_{\mathbb R}X=2n-1, n\geq 2.$ If $X$ is weakly pseudoconvex and strongly pseudoconvex at a point, then as $m\rightarrow\infty$
\begin{equation}\label{x4}
{\rm dim}H^0_{b, m}(X)\thickapprox m^{n-1},~ {\rm dim}H^q_{b, m}(X)=o(m^{n-1})~\text{for}~q\geq 1.
\end{equation}
In particular, we have ${\rm dim}H^0_{b}(X)=\infty$.
\end{theorem}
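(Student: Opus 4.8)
The plan is to feed the strong Morse inequality \eqref{a3} with the Levi-curvature hypothesis. Since $X$ is weakly pseudoconvex, the Levi-form $\mathcal L_x$ is positive semidefinite at every point, so it has no negative eigenvalues; hence $X(q)=\emptyset$ for every $q\geq 1$, and in particular $\int_{X(1)}\abs{\det\mathcal L_x}\,dv_X(x)=0$. Plugging this into \eqref{a3} gives
\begin{equation}\label{e-prop-1}
{\rm dim}\,H^1_{b,m}(X)-{\rm dim}\,H^0_{b,m}(X)\leq-\frac{m^{n-1}}{2\pi^n}\int_{X(0)}\abs{\det\mathcal L_x}\,dv_X(x)+o(m^{n-1}),\quad m\to\infty.
\end{equation}
Now $X(0)$ is exactly the set of points where $\mathcal L_x$ is positive definite; by hypothesis there is at least one point $x_1$ where $X$ is strongly pseudoconvex, and strong pseudoconvexity is an open condition, so $X(0)$ contains a nonempty open set. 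On that open set $\abs{\det\mathcal L_x}>0$ and the volume form is positive, so $c:=\frac{1}{2\pi^n}\int_{X(0)}\abs{\det\mathcal L_x}\,dv_X(x)>0$. Since ${\rm dim}\,H^1_{b,m}(X)\geq 0$, inequality \eqref{e-prop-1} forces ${\rm dim}\,H^0_{b,m}(X)\geq c\,m^{n-1}+o(m^{n-1})$, which is a lower bound of the right order of magnitude.

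For the matching upper bound and the statement on $H^q_{b,m}(X)$ for $q\geq1$, I would invoke the weak Morse inequalities of Theorem~\ref{a4}: for every $q$,
\begin{equation}\label{e-prop-2}
{\rm dim}\,H^q_{b,m}(X)\leq\frac{m^{n-1}}{2\pi^n}\int_{X(q)}\abs{\det\mathcal L_x}\,dv_X(x)+o(m^{n-1}).
\end{equation}
When $q\geq1$ we just saw $X(q)=\emptyset$, so the right-hand side is $o(m^{n-1})$, giving ${\rm dim}\,H^q_{b,m}(X)=o(m^{n-1})$ for $q\geq1$. When $q=0$ the right-hand side of \eqref{e-prop-2} is $\frac{m^{n-1}}{2\pi^n}\int_{X(0)}\abs{\det\mathcal L_x}\,dv_X(x)+o(m^{n-1})=c\,m^{n-1}+o(m^{n-1})$. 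Combining the lower bound from the previous paragraph with this upper bound yields ${\rm dim}\,H^0_{b,m}(X)\thickapprox m^{n-1}$, which together with the $q\geq1$ estimate is precisely \eqref{x4}.

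Finally, for the last assertion ${\rm dim}\,H^0_b(X)=\infty$: from the Fourier decomposition \eqref{x6} we have $H^0_b(X)\cong\bigoplus_{m\in\mathbb Z}H^0_{b,m}(X)$, and since ${\rm dim}\,H^0_{b,m}(X)\thickapprox m^{n-1}\geq1$ for all large $m$, infinitely many summands are nonzero, so the direct sum is infinite-dimensional. I do not expect a serious obstacle here: the entire argument is just a bookkeeping exercise combining \eqref{a3}, Theorem~\ref{a4}, and the openness of strong pseudoconvexity. The one point that deserves a sentence of care is that $X(0)$ genuinely carries positive mass — i.e. that a single strongly pseudoconvex point propagates to an open set of positive measure — but this is immediate from continuity of the Levi-form's eigenvalues. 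The deeper content of the paper (the Szegő-kernel asymptotics behind Theorems~\ref{m} and~\ref{o}) is already packaged into the Morse inequalities, so nothing new is needed at this stage.
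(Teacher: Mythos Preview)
Your argument is correct and follows exactly the paper's own proof: deduce $X(q)=\emptyset$ for $q\geq1$ from weak pseudoconvexity, apply the weak Morse inequality (Theorem~\ref{a4}) to kill $H^q_{b,m}$ for $q\geq1$ and to bound $H^0_{b,m}$ from above, use \eqref{a3} together with the positivity of $\int_{X(0)}\abs{\det\mathcal L_x}\,dv_X$ for the lower bound, and conclude via \eqref{x6}. Your write-up is simply a more detailed version of the paper's terse paragraph.
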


\begin{proof}
Since $X$ is a weakly pseudoconvex manifold and strongly pseudoconvex at least at a point, we have
$X(q)=\emptyset$ for every $q\geq 1$ and $X(0)$ contains a ball. By the weak Morse in Theorem~\ref{a4} we have ${\rm dim}H^q_{b, m}(X)=o(m^{n-1})~\text{for}~q\geq 1$ as $m\rightarrow\infty$. By the weak Morse inequalities for $q=0$ and (\ref{a3}), we get the conclusion of (\ref{x4}). Using the Fourier decomposition (\ref{x6}), we have ${\rm dim}H^0_{b}(X)=\infty.$
\end{proof}
Theorem~\ref{a5} implies a new proof of Grauert-Riemenschneider conjecture (Siu's
criterion) as stated and solved by Siu \cite{S84}, ~\cite[Theorem\,1]{S85} and Ma-Marinescu~\cite[Theorem
2.2.27 (i)]{MM07}.
\begin{corollary}[Grauert-Riemenschneider conjecture, Siu's criterion]
Let $M$ be a compact Hermitian manifold and let $(L, h^L)$ be a Hermitian line bundle over $M$. If $L$ is semi-positive and positive at a point, then $L$ is big.
\end{corollary}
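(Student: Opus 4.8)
The plan is to deduce the corollary from Theorem~\ref{a5} by the same circle-bundle reduction used in the proof of Corollary~\ref{kkkk}, so that essentially no new analysis is needed beyond careful bookkeeping.

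First I would set $n=\dim_{\mathbb C}M$ and let $X:=\set{v\in L^\ast:|v|^2_{h^{-1}}=1}$ be the unit circle bundle of the dual Hermitian line bundle $(L^\ast,h^{-1})$, equipped with the fiberwise $S^1$-action $e^{i\theta}\circ(z,\lambda)=(z,e^{i\theta}\lambda)$. As recorded in the proof of Corollary~\ref{kkkk}, $X$ is then a compact connected CR manifold of real dimension $2n+1$ carrying a transversal CR $S^1$-action, and its Levi form satisfies $\mathcal L_x|_{T^{1,0}X}=\frac12\mathcal R^L_z$ for $x=(z,\lambda)$. Hence Theorem~\ref{a5} applies to $X$ with its parameter ``$n$'' replaced by $n+1$.

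Next I would translate the curvature hypotheses. Semi-positivity of $(L,h^L)$, i.e.\ $\mathcal R^L\geq0$ on $M$, forces $\mathcal L_x\geq0$ for every $x\in X$, so $X$ is weakly pseudoconvex; positivity of $\mathcal R^L$ at a point $p\in M$ forces $\mathcal L_x>0$ for $x$ in the fiber over $p$, so $X$ is strongly pseudoconvex at a point. Thus the hypotheses of Theorem~\ref{a5} are met, and we conclude $\dim H^0_{b,m}(X)\thickapprox m^{n}$ as $m\to\infty$. Finally, using the Ma--Marinescu identification $H^0_{b,m}(X)\cong H^0_{\overline\partial}(M,L^m)$ (the degree-zero case of the isomorphism cited from \cite[p.746]{MM06} and \cite{CHT15}), this yields $\dim H^0_{\overline\partial}(M,L^m)\thickapprox m^n$, hence $\limsup_{m\to\infty}m^{-n}\dim H^0_{\overline\partial}(M,L^m)>0$, which is precisely the statement that $L$ is big.

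The only delicate point --- rather than a genuine obstacle --- is bookkeeping around dimensions and normalizations: one must track the shift from $\dim_{\mathbb C}M=n$ to $\dim_{\mathbb R}X=2n+1$ when quoting Theorem~\ref{a5}, and check that the sign convention in $\mathcal L_x=\frac12\mathcal R^L_z$ makes $\mathcal R^L\geq0$ correspond to pseudoconvexity (and not pseudoconcavity) of the circle bundle, so that in the notation of Theorem~\ref{a5} one has $X(q)=\emptyset$ for $q\geq1$ and $X(0)$ with nonempty interior. Granting Theorem~\ref{a5}, everything else is a direct quotation.
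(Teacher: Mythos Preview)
Your proposal is correct and follows exactly the paper's approach: the paper's proof is the single sentence ``Applying Theorem~\ref{a5} to the circle bundle of $L$ we get the conclusion of the corollary,'' and you have simply unpacked this sentence, making explicit the dimension shift, the translation of the curvature hypothesis into pseudoconvexity of the circle bundle, and the Ma--Marinescu isomorphism $H^0_{b,m}(X)\cong H^0_{\overline\partial}(M,L^m)$ already cited in the proof of Corollary~\ref{kkkk}. There is nothing to add or correct.
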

\begin{proof}
Applying Theorem ~\ref{a5} to the circle bundle of $L$ we get the conclusion of the corollary.
\end{proof}
Demailly \cite[Theorem 0.8(a)]{D85} and \cite[Theorem 2.2.27(ii)]{MM07} proved a more general form of the Grauert-Riemenschneider conjecture, namely that if the
integral of $c_1(L, h)^n$ over the set of points for which $c_1(L, h)$ has one or fewer
negative eigenvalues is positive (i.e. $\int_{M(\leq 1)} c_1(L, h)^n > 0$), then $L$ is big and $M$ is Moishezon.

If we set $\int_{X(\leq 1)}|{\rm det}\mathcal L_x|dv_X=\int_{X(0)}|{\rm det}\mathcal L_x|dv_X-\int_{X(1)}|{\rm det}\mathcal L_x|dv_X$
and assume that the Levi form of CR manifold is not always semi-positive but that the integral $\int_{X(\leq 1)}|{\rm det}\mathcal L_x|dv_X>0$, then by (\ref{a3}) we still get many CR functions:

\begin{theorem}\label{x3}
Let X be a compact connected CR manifold of dimension $2n-1$ with a
transversal CR  $S^1$-action. Assume that
\begin{equation}
\int_{X(\leq 1)}|{\rm det}\mathcal L_x|dv_X>0.
\end{equation}
Then ${\rm dim}H^0_{b, m}(X)\approx m^{n-1}$ as $m\rightarrow\infty.$ In particular, ${\rm dim}H^0_{b}(X)=\infty.$
\end{theorem}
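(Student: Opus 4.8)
The plan is to derive Theorem~\ref{x3} from the strong Morse inequality~\eqref{a3}, exactly in the spirit of the proof of Theorem~\ref{a5}, but without the semi-positivity hypothesis. First I would recall that~\eqref{a3}, which is the $q=1$ case of the strong Morse inequalities of Theorem~\ref{a2}, reads
\begin{equation*}
{\rm dim}\,H^0_{b,m}(X)-{\rm dim}\,H^1_{b,m}(X)\geq\frac{m^{n-1}}{2\pi^n}\Bigl(\int_{X(0)}|{\rm det}\,\mathcal L_x|\,dv_X-\int_{X(1)}|{\rm det}\,\mathcal L_x|\,dv_X\Bigr)+o(m^{n-1}),
\end{equation*}
as $m\to\infty$. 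By the very definition $\int_{X(\le1)}|{\rm det}\,\mathcal L_x|\,dv_X=\int_{X(0)}|{\rm det}\,\mathcal L_x|\,dv_X-\int_{X(1)}|{\rm det}\,\mathcal L_x|\,dv_X$, the hypothesis says the bracketed quantity is a strictly positive constant, call it $2\pi^n c$ with $c>0$. Hence ${\rm dim}\,H^0_{b,m}(X)\geq{\rm dim}\,H^0_{b,m}(X)-{\rm dim}\,H^1_{b,m}(X)\geq c\,m^{n-1}+o(m^{n-1})$, where the first inequality uses only that ${\rm dim}\,H^1_{b,m}(X)\geq0$ (each $H^q_{b,m}(X)$ is a finite-dimensional vector space by Theorem~\ref{gI}). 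This gives the lower bound ${\rm dim}\,H^0_{b,m}(X)\gtrsim m^{n-1}$.

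Next I would supply the matching upper bound. The weak Morse inequalities of Theorem~\ref{a4} applied with $q=0$ give ${\rm dim}\,H^0_{b,m}(X)\leq\frac{m^{n-1}}{2\pi^n}\int_{X(0)}|{\rm det}\,\mathcal L_x|\,dv_X+o(m^{n-1})$, so in particular ${\rm dim}\,H^0_{b,m}(X)=O(m^{n-1})$. Combining the two bounds yields ${\rm dim}\,H^0_{b,m}(X)\approx m^{n-1}$ as $m\to\infty$, which is the first assertion. (Here $\approx$ means bounded above and below by positive constant multiples of $m^{n-1}$ for $m$ large.)

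For the final claim, I would invoke the Fourier decomposition~\eqref{x6}, namely $H^0_b(X)\cong\bigoplus_{m\in\mathbb Z}H^0_{b,m}(X)$. Since ${\rm dim}\,H^0_{b,m}(X)\geq c\,m^{n-1}+o(m^{n-1})\to\infty$ as $m\to+\infty$, infinitely many summands are nonzero (indeed each is eventually nonzero and of unbounded dimension), so the direct sum is infinite-dimensional; thus ${\rm dim}\,H^0_b(X)=\infty$. I do not anticipate a genuine obstacle here: the entire content of the theorem is already packaged in~\eqref{a3} and Theorem~\ref{a4}, and the only point requiring a word of care is that dropping a nonnegative term (${\rm dim}\,H^1_{b,m}(X)$) from the left side of the strong Morse inequality is legitimate and preserves the inequality — the rest is bookkeeping with the definition of $\int_{X(\le1)}$.
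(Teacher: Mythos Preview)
Your proposal is correct and follows exactly the approach indicated in the paper: the paper does not give a separate formal proof of this theorem but simply remarks before its statement that ``by \eqref{a3} we still get many CR functions,'' and your argument fills in precisely those details (lower bound from \eqref{a3} after dropping the nonnegative term $\dim H^1_{b,m}(X)$, upper bound from Theorem~\ref{a4}, and infinite-dimensionality from the Fourier decomposition~\eqref{x6}).
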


Theorem~\ref{x3} is the analogue of Demailly's criterion \cite[Theorem 0.8(a)]{D85} and \cite[Theorem 2.2.27(ii)]{MM07}, for CR manifolds with transversal CR $S^1$-action and actually implies this criterion if applied to the Grauert tube. Theorem~\ref{x3} shows that one can allow the Levi form to be negative in a controlled way and still have a lot of CR functions.

\subsubsection{Morse inequalities for $m\To-\infty$}

In the main results above, we only consider the Morse inequalities for the positive Fourier component ${\rm dim}H^q_{b, m}(X)$ as $m\rightarrow\infty.$ In fact, we also have the Morse inequalities for the negative Fourier component $H^q_{b, m}(X)$ as $m\rightarrow-\infty$. Based on the same arguments as in the proof of the main results, the bounds for $H^q_{b, m}(X)$ for $m\rightarrow-\infty$ will be given in terms of integrals of the Levi form of $X$ over the sets $X(n-1-q)$. More precisely, we have

\begin{theorem}\label{t-gue160214}
Let $X$ be a compact connected CR manifold with a transversal CR $S^1$-action. Assume that ${\rm dim}_{\mathbb R}X=2n-1, n\geq 2.$ For every $q=0,1,2,\ldots,n-1$, as $m\rightarrow-\infty$, we have
\begin{equation}
\begin{split}
&\dim H^q_{b, m}(X)\leq\frac{\abs{m}^{n-1}}{2\pi^n}\int_{X(n-1-q)}|\det\mathcal L_x|dv_X(x)+o(\abs{m}^{n-1}),\\
&\sum_{j=0}^q(-1)^{q-j}{\rm dim}H^j_{b, m}(X)\leq\frac{\abs{m}^{n-1}}{2\pi^n}\sum_{j=0}^q(-1)^{q-j}\int_{X(n-1-j)}|\det\mathcal L_x|dv_X(x)+o(\abs{m}^{n-1}).
\end{split}
\end{equation}
In particular, when $q=n-1$, as $m\rightarrow-\infty$, we have the asymptotic Riemann-Roch theorem
\begin{equation}
\sum_{j=0}^{n-1}(-1)^{j}{\rm dim}H^j_{b, m}(X)=\frac{\abs{m}^{n-1}}{2\pi^n}\sum_{j=0}^{n-1}(-1)^{j}\int_{X(n-1-j)}|\det\mathcal L_x|dv_X(x)+o(\abs{m}^{n-1}).
\end{equation}
\end{theorem}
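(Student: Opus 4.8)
The plan is to reduce the statement entirely to the results already proved for $m\to+\infty$ (Theorem~\ref{a4} and Theorem~\ref{a2}) by passing to the conjugate CR structure. Let $\overline X$ denote the smooth manifold $X$ equipped with the CR structure $T^{1,0}\overline X:=T^{0,1}X=\overline{T^{1,0}X}$. Integrability of $T^{1,0}X$, the transversality condition \eqref{x2}, and the CR property \eqref{x1} of the $S^1$-action are all preserved under complex conjugation (here one uses that $T$ is a real vector field and that $de^{i\theta}$ commutes with conjugation), so $\overline X$ is again a compact connected CR manifold of real dimension $2n-1$ carrying a transversal CR locally free $S^1$-action, with the same induced vector field $T$.

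Next I would record the conjugation isomorphism. For $v\in\Omega^{0,q}(X)$ we have $\overline v\in\Omega^{q,0}(X)=\Omega^{0,q}(\overline X)$, and one checks directly from the definitions that $\overline\partial_b^{\overline X}\overline v=\overline{\overline\partial_b v}$ (since conjugation swaps $T^{\ast1,0}X$ and $T^{\ast0,1}X$ and commutes with $d$) and, using \eqref{b} together with $(de^{i\theta})^\ast\overline v=\overline{(de^{i\theta})^\ast v}$, that $T\overline v=\overline{Tv}$. Hence $v\mapsto\overline v$ is a conjugate-linear isomorphism of the Fourier-graded $\overline\partial_b$-complexes that carries $\Omega^{0,\bullet}_m(X)$ onto $\Omega^{0,\bullet}_{-m}(\overline X)$, so it induces conjugate-linear isomorphisms $H^q_{b,m}(X)\cong H^q_{b,-m}(\overline X)$ and in particular
\[
\dim H^q_{b,m}(X)=\dim H^q_{b,-m}(\overline X),\qquad m\in\mathbb Z,\ 0\le q\le n-1 .
\]

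Then I would compare the Hermitian CR geometry of the two structures. The fixed $T$-rigid metric $\langle\cdot|\cdot\rangle$ on $\mathbb CTX$ still satisfies all the requirements of Lemma~\ref{a7} relative to $\overline X$ (it is still $T$-rigid, $T\perp(T^{1,0}\overline X\oplus T^{0,1}\overline X)$, $\langle T|T\rangle=1$, and real on real vectors), so we may use it for $\overline X$ as well; consequently $\omega_0^{\overline X}=\omega_0$ and $dv_{\overline X}=dv_X$. If $\{U_1,\dots,U_{n-1}\}$ is a local orthonormal frame of $T^{1,0}X$ diagonalizing $\mathcal L_x$ with eigenvalues $\lambda_1,\dots,\lambda_{n-1}$, then $\{\overline U_1,\dots,\overline U_{n-1}\}$ is a local orthonormal frame of $T^{1,0}\overline X$ which, by Definition~\ref{d-1.2} and $\omega_0^{\overline X}=\omega_0$, diagonalizes $\mathcal L^{\overline X}_x$ with eigenvalues $-\lambda_1,\dots,-\lambda_{n-1}$. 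Hence $\overline X(q)=X(n-1-q)$ and $|\det\mathcal L^{\overline X}_x|=|\det\mathcal L_x|$ for every $x\in X$. Applying Theorem~\ref{a4} and Theorem~\ref{a2} to $\overline X$ in degree $q$, and substituting these identities together with the displayed dimension equality (with $m$ replaced by $\abs{m}$), yields at once the weak inequality, the strong inequality, and, taking $q=n-1$, the asymptotic Riemann-Roch equality of Theorem~\ref{t-gue160214} as $m\to-\infty$.

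The verifications above are all elementary, so I do not expect a genuine obstacle; the only points requiring a little care are the compatibility identities $\overline\partial_b^{\overline X}\overline v=\overline{\overline\partial_b v}$, $T\overline v=\overline{Tv}$, $\omega_0^{\overline X}=\omega_0$, and the sign reversal of the Levi eigenvalues. Alternatively---and this is presumably what ``based on the same arguments'' refers to---one can rerun the proofs of Theorem~\ref{m} and Theorem~\ref{o} for $m<0$: the only change is that in the scaling picture adapted to $\abs{m}$ the weight $2mF_{\abs{m}}^\ast\varphi$ now converges to $-2\Phi_0=2\sum_{j=1}^{n-1}(-\lambda_j)|z_j|^2$, so the model Kohn Laplacian has nontrivial $L^2$ kernel on $(0,q)$-forms precisely where $n-1-q$ of the $\lambda_j$ are negative, i.e.\ on $X(n-1-q)$, while the value of the corresponding model Szeg\"o kernel at the center is again $\pi^{-(n-1)}|\lambda_1\cdots\lambda_{n-1}|=\pi^{-(n-1)}|\det\mathcal L_x|$; Demailly's linear-algebra argument then applies without change.
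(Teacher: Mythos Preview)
Your proof is correct. The paper does not actually give a proof of this theorem; it only remarks that ``based on the same arguments as in the proof of the main results, the bounds for $H^q_{b,m}(X)$ for $m\to-\infty$ will be given in terms of integrals of the Levi form of $X$ over the sets $X(n-1-q)$.'' This is exactly your alternative approach in the last paragraph: rerun the scaling analysis of Theorems~\ref{m} and~\ref{o} for $m<0$, so that the rescaled weight converges to $-2\Phi_0$ and the model Szeg\"o kernel is supported on $X(n-1-q)$.

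Your primary route, passing to the conjugate CR structure $\overline X$ and invoking Theorems~\ref{a4} and~\ref{a2} for $\overline X$, is a genuinely different argument. It has the virtue of reducing the statement to the already-proved positive-$m$ case by a purely algebraic device, with no need to revisit the local asymptotics; the only work is the handful of compatibility checks you list (all of which are straightforward and correctly stated). The paper's implicit approach, by contrast, keeps the CR structure fixed and repeats the analytic machinery, which is conceptually heavier but makes the appearance of $X(n-1-q)$ transparent at the level of the model Laplacian $\Box^{(q)}_{-2\Phi_0}$. Either route is acceptable here; yours is shorter and cleaner.
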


From Theorem~\ref{a4}, Theorem~\ref{a2} and Theorem~\ref{t-gue160214}, we deduce

\begin{theorem}
Let $X$ be a compact connected CR manifold of real dimension $2n-1$ with a transversal CR $S^1$-action. Let $q\in\{0, 1, \cdots, n-1\}.$ Assume that the Levi form of $X$ has $q$ non-positive and $n-1-q$ non-negative eigenvalues everywhere. Then
\begin{equation}
\begin{split}
&{\rm dim}H^j_{b, m}(X)=o(m^{n-1}), ~\text{as}~m\rightarrow\infty, ~\text{for}~j\neq q\\
&{\rm dim}H^j_{b, m}(X)=o(|m|^{n-1}),~\text{as}~m\rightarrow -\infty, ~\text{for}~j\neq n-1-q.
\end{split}
\end{equation}
If moreover the Levi-form is non-degenerate at some point, then
\begin{equation}
\begin{split}
&{\rm dim}H^q_{b, m}(X)\approx m^{n-1}, ~\text{as}~m\rightarrow\infty\\
&{\rm dim}H^{n-1-q}_{b, m}(X)\approx |m|^{n-1}, ~\text{as}~m\rightarrow -\infty\\
&{\rm dim}H^q_{b}(X)=\infty, {\rm dim}H^{n-1-q}_{b}(X)=\infty.
\end{split}
\end{equation}

In particularly, if $X$ is weakly pseudoconvex and strongly pseudoconvex at a point, then
\[\mbox{${\rm dim}H^{n-1}_{b, m}(X)\approx |m|^{n-1}$ as $m\rightarrow -\infty$}\]
 and in particluar ${\rm dim}H^{n-1}_{b}(X)=\infty$. Moreover,
${\rm dim}H^q_{b}(X)=o(\abs{m}^{n-1})$ as $m\To-\infty$ for $q\leq n-2$.
\end{theorem}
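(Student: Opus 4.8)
The plan is to feed the Levi-curvature hypothesis directly into the Morse inequalities already established in Theorem~\ref{a4}, Theorem~\ref{a2} and Theorem~\ref{t-gue160214}; no further spectral analysis of $\Box^{(q)}_{b,m}$ is required, so the argument is purely a bookkeeping reduction.

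The first step is to record what the hypothesis says about the sets $X(j)$. If $\mathcal L_x$ has $q$ non-positive and $n-1-q$ non-negative eigenvalues at every $x\in X$, then at any point the number of strictly negative eigenvalues of $\mathcal L_x$ is at most $q$ and the number of strictly positive eigenvalues is at most $n-1-q$; hence a point can belong to $X(j)$ (exactly $j$ negative, $n-1-j$ positive) only if $j\le q$ and $n-1-j\le n-1-q$, i.e. only if $j=q$. Thus $X(j)=\emptyset$ for every $j\neq q$. Plugging this into the weak Morse inequalities of Theorem~\ref{a4} gives $\dim H^j_{b,m}(X)=o(m^{n-1})$ as $m\to\infty$ for every $j\neq q$. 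For $m\to-\infty$ I would use Theorem~\ref{t-gue160214}, which bounds $\dim H^j_{b,m}(X)$ by a constant multiple of $\abs{m}^{n-1}\int_{X(n-1-j)}|\det\mathcal L_x|\,dv_X$; since $X(n-1-j)=\emptyset$ unless $n-1-j=q$, this yields $\dim H^j_{b,m}(X)=o(\abs{m}^{n-1})$ as $m\to-\infty$ for every $j\neq n-1-q$. This is precisely the first display.

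For the second display I would additionally invoke non-degeneracy. At a point $x_0$ where $\mathcal L_{x_0}$ is non-degenerate, ``non-positive and non-degenerate'' forces the relevant $q$ eigenvalues to be strictly negative and the remaining $n-1-q$ to be strictly positive, so $x_0\in X(q)$; since the eigenvalues vary continuously, $X(q)$ contains a small ball and therefore $\int_{X(q)}|\det\mathcal L_x|\,dv_X>0$. Now apply the asymptotic Riemann--Roch equality, i.e. the $q=n-1$ case of Theorem~\ref{a2}:
\[
\sum_{j=0}^{n-1}(-1)^{j}\dim H^j_{b,m}(X)=\frac{m^{n-1}}{2\pi^n}\sum_{j=0}^{n-1}(-1)^{j}\int_{X(j)}|\det\mathcal L_x|\,dv_X+o(m^{n-1}).
\]
On the left every term with $j\neq q$ is $o(m^{n-1})$ by the previous step, and on the right every term with $j\neq q$ vanishes, so the identity collapses to $\dim H^q_{b,m}(X)=\frac{m^{n-1}}{2\pi^n}\int_{X(q)}|\det\mathcal L_x|\,dv_X+o(m^{n-1})$, whence $\dim H^q_{b,m}(X)\thickapprox m^{n-1}$. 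Running the same collapse on the asymptotic Riemann--Roch identity in Theorem~\ref{t-gue160214} (whose right-hand side carries $X(n-1-j)$ in place of $X(j)$) isolates the degree $n-1-q$ and gives $\dim H^{n-1-q}_{b,m}(X)\thickapprox\abs{m}^{n-1}$ as $m\to-\infty$. Summing over $m$ and using the Fourier decomposition~\eqref{x6} then yields $\dim H^q_b(X)=\infty$ and $\dim H^{n-1-q}_b(X)=\infty$.

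Finally, the weakly pseudoconvex, strongly-pseudoconvex-at-a-point assertion is the specialisation $q=0$: then $n-1-q=n-1$, so the statements above read $\dim H^{n-1}_{b,m}(X)\thickapprox\abs{m}^{n-1}$ as $m\to-\infty$, $\dim H^{n-1}_b(X)=\infty$, and $\dim H^j_{b,m}(X)=o(\abs{m}^{n-1})$ as $m\to-\infty$ for $j\le n-2$ (the $j\neq n-1-q$ part of the first display). I do not expect a genuine obstacle here, since all the substantive work resides in the cited theorems; the only point that demands care is the index bookkeeping for $m\to-\infty$ --- keeping straight that the set forced to be empty is $X(n-1-j)$ rather than $X(j)$, and that the collapsed alternating sums really single out one cohomology degree --- which is immediate once $X(j)=\emptyset$ for $j\neq q$ has been established.
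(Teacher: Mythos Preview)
Your proposal is correct and takes essentially the approach the paper intends: the paper does not give a separate proof of this theorem but simply states that it is ``deduced'' from Theorem~\ref{a4}, Theorem~\ref{a2} and Theorem~\ref{t-gue160214}, and you have filled in precisely those deductions. The key observation that $X(j)=\emptyset$ for $j\neq q$ under the signature hypothesis, followed by feeding this into the weak Morse inequalities and the asymptotic Riemann--Roch identity, is exactly what is required; your index bookkeeping for both $m\to\infty$ and $m\to-\infty$ is accurate.
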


\subsection{Proofs of Theorem~\ref{m} and the weak Morse inequalities}

Fix $x_0\in X$ and choose canonical local patch $D$ near $x_0$ with canonical coordinates $(z, \theta,\varphi)$ such that $(z,\theta,\varphi)$ is trivial at $x_0$. Write $D=\tilde D\times(-\delta, \delta), \tilde D=\{z\in\mathbb C^{n-1}: |z|<\varepsilon\}$. In this section, we always treat $\tilde D$ as an open subset of $\mathbb C^{n-1}$ with the complex coordinates $z=(z_1, \ldots, z_{n-1})$. We choose the fixed Hermitian metric given on $T^{\ast 0, 1}\tilde D$ defined in Remark~\ref{g1} and extend it to $T^{\ast 0, q}\tilde D.$ We still use the notation $\langle\cdot|\cdot\rangle$ to denote the Hermitian metric on $T^{\ast 0, q}\tilde D.$
Let $u\in\Omega^{0, q}_m(X)$. From the definition of $\Omega^{0, q}_m(X)$ we have that $Tu=imu$. Then on $D$, $u=\tilde u(z)e^{im\theta}$ with $\tilde u(z)\in\Omega^{0, q}( \tilde D)$ and $\tilde u(z)=\sum_{|J|=q}^\prime\tilde u_J(z)d\overline z^J$. Before the proof of the weak Morse inequalities, we first need the following lemma

\begin{lemma}\label{b1}
For all $u\in\Omega_m^{0, q}(X)$, on $D$ we have
\begin{equation}
\begin{split}
&\overline\partial_bu=e^{im\theta}e^{-m\varphi}\overline\partial(e^{m\varphi}e^{-im\theta}u),  \overline\partial_b^\ast u=e^{im\theta}e^{-m\varphi}\overline\partial^{\ast, 2m\varphi }(e^{m\varphi}e^{-im\theta}u),\\
&\Box^{(q)}_{b, m}u=e^{im\theta}e^{-m\varphi}\Box^{(q)}_{2m\varphi}(e^{m\varphi}e^{-im\theta}u).
\end{split}
\end{equation}
Recall that $\overline\partial^{\ast, 2m\varphi}$ is as in the discussion before \eqref{l2} and $\Box^{(q)}_{2m\varphi}$ is given by \eqref{e-gue150620f}.
\end{lemma}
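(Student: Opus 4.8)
The plan is to verify the three identities by a direct local computation in canonical coordinates, using the explicit formulas for the frames $Z_j$ and $e^j$ and exploiting the fact that, on $\Omega^{0,q}_m(X)$, the $\theta$-dependence of a form is carried entirely by a factor $e^{im\theta}$. Write $u = \tilde u(z)e^{im\theta}$ on $D$ as in the text, with $\tilde u(z)\in\Omega^{0,q}(\tilde D)$ and $\tilde u = \sideset{}{'}\sum_{|J|=q}\tilde u_J(z)\,d\overline z^J$.

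First I would treat $\ddbar_b$. Recall from \eqref{e-gue150620} that on $D$ one has $\ddbar_b u = \sideset{}{'}\sum_{|J|=q}\ddbar_b(\tilde u_J e^{im\theta})\,d\overline z^J$, so it suffices to handle the action of $\ddbar_b$ on scalar functions and to recall that on functions $\ddbar_b$ is computed by applying the $\overline Z_j = \partial/\partial\overline z_j - i(\partial\varphi/\partial\overline z_j)\,\partial/\partial\theta$ (the conjugates of \eqref{e-can}), contracted with the dual coframe. The key elementary fact is that for any smooth $g(z)$,
\[
\overline Z_j\bigl(g(z)e^{im\theta}\bigr) = e^{im\theta}\Bigl(\frac{\partial g}{\partial\overline z_j} + m\,\frac{\partial\varphi}{\partial\overline z_j}\,g\Bigr) = e^{im\theta}e^{-m\varphi}\,\frac{\partial}{\partial\overline z_j}\bigl(e^{m\varphi}g\bigr),
\]
where I used that $\varphi$ is independent of $\theta$ and $(\partial/\partial\theta)e^{im\theta}=im\,e^{im\theta}$. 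Summing against $d\overline z^j\wedge$ and over $J$, and noting that the coefficients of $\ddbar$ relating the $\overline Z_j$-frame to the $\ddbar$-operator on $\tilde D$ depend only on $z$, this gives $\ddbar_b u = e^{im\theta}e^{-m\varphi}\ddbar(e^{m\varphi}e^{-im\theta}u)$, where on the right $\ddbar$ acts on $\tilde u = e^{-im\theta}u$ regarded as a form on $\tilde D$.

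Next I would obtain the formula for $\ddbar_b^{\ast}$. The cleanest route is to use the first identity together with the relation between the two $L^2$ structures. By Lemma~\ref{b5} the volume form on $D$ is $dv_X = \lambda(z)\,dv(z)\,d\theta$ with $\lambda$ independent of $\theta$, and $\{e^j\}$ is a $\theta$-independent orthonormal coframe; hence for $u,v\in\Omega^{0,\bullet}_m(X)$ supported in $D$ the pairing $(u\,|\,v)$ over $X$ factors, after integrating out $\theta$ over a period, as a constant multiple of the weighted pairing $(\tilde u\,|\,\tilde v)_{2m\varphi}$ on $\tilde D$ of their $z$-parts — this is exactly the point of introducing the weight $e^{-2m\varphi}$. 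Taking formal adjoints of the already-established intertwining $\ddbar_b(e^{im\theta}e^{-m\varphi}\,\cdot\,) = e^{im\theta}e^{-m\varphi}\ddbar(\cdot)$ with respect to these matched inner products yields $\ddbar_b^{\ast}u = e^{im\theta}e^{-m\varphi}\ddbar^{\ast,2m\varphi}(e^{m\varphi}e^{-im\theta}u)$; alternatively one can verify this by a direct computation with the adjoint formula, using $(\overline Z_j)^{\ast,\,dv_X} = -e^{m\varphi}(\partial/\partial z_j)e^{-m\varphi}$ up to lower-order, $\theta$-independent terms. The third identity is then immediate: conjugation by $e^{im\theta}e^{-m\varphi}$ is multiplicative, so
\[
\Box^{(q)}_{b,m}u = (\ddbar_b\ddbar_b^{\ast}+\ddbar_b^{\ast}\ddbar_b)u = e^{im\theta}e^{-m\varphi}\bigl(\ddbar\,\ddbar^{\ast,2m\varphi}+\ddbar^{\ast,2m\varphi}\ddbar\bigr)(e^{m\varphi}e^{-im\theta}u) = e^{im\theta}e^{-m\varphi}\Box^{(q)}_{2m\varphi}(e^{m\varphi}e^{-im\theta}u).
\]

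The main obstacle is bookkeeping rather than conceptual: one must check carefully that the "extra" zeroth-order terms in $\ddbar$ and $\ddbar^{\ast,2m\varphi}$ coming from $\ddbar e_j$ and from derivatives of $\lambda$ match on both sides, and that the frames $e^j$ used to express $\ddbar_b$ on $X$ genuinely coincide with those used to define $\ddbar$ and its weighted adjoint on $\tilde D$ as in Remark~\ref{g1}. Since all these auxiliary data are $\theta$-independent by Lemma~\ref{b5} and the $T$-rigidity of the metric, the conjugation factor $e^{im\theta}$ commutes with everything except $\partial/\partial\theta$, and $e^{-m\varphi}$ is precisely engineered so that $\partial/\partial\theta$ hitting $e^{im\theta}$ converts into the weight; once this is set up, the verification for $\ddbar_b^{\ast}$ reduces, as above, either to taking adjoints in the first identity or to a one-line computation, and the $\Box$-identity follows formally.
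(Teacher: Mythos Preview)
Your treatment of $\overline\partial_b$ and of $\Box^{(q)}_{b,m}$ matches the paper's: both compute $\overline Z_j(\tilde u_J e^{im\theta})$ directly, rewrite it as $e^{im\theta}e^{-m\varphi}\partial_{\overline z_j}(e^{m\varphi}\tilde u_J)$, and obtain the Laplacian identity by composing the first two.

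For $\overline\partial_b^{\ast}$ the paper's route differs from your primary suggestion, and the difference is not just bookkeeping. You want to deduce the adjoint identity by taking adjoints of the intertwining $\overline\partial_b\circ\Phi=\Phi\circ\overline\partial$, where $\Phi(w)=e^{im\theta}e^{-m\varphi}w$, using that $\Phi$ is a scaled isometry onto ``forms in $\Omega^{0,\bullet}_m(X)$ supported in $D$''. But a nonzero form of the shape $\tilde v(z)e^{im\theta}$ is never compactly supported in the $\theta$-variable inside $D=\tilde D\times(-\delta,\delta)$, so there are no compactly supported $m$-equivariant test forms to pair against, and the global pairing over $X$ does not reduce to an integral over $D$. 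The paper handles this by testing $\overline\partial_b^{\ast}u$ against $e^{-2m\varphi(z)}g(z)\chi(\theta)e^{im\theta}$ with $g\in\Omega^{0,q-1}_0(\tilde D)$ and $\chi\in C^\infty_0((-\delta,\delta))$, $\int\chi\,d\theta=1$; this test form \emph{is} compactly supported in $D$ but is not $m$-equivariant, so applying $\overline\partial_b$ to it produces, besides the expected term $\chi(\theta)e^{im\theta}e^{-m\varphi}\overline\partial(e^{-m\varphi}g)$, an extra contribution proportional to $\chi'(\theta)\,\overline\partial\varphi\wedge g$, which pairs to zero against $u$ since $\int\chi'\,d\theta=0$. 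Letting $g$ vary then identifies $e^{-im\theta}\overline\partial_b^{\ast}u$ with $e^{-m\varphi}\overline\partial^{\ast,2m\varphi}(e^{m\varphi}\tilde u)$. Your alternative of a direct coordinate computation of the adjoint would also succeed, but the abstract ``take adjoints of the isometry'' argument as you have phrased it needs this cutoff device (or the special case $x_0\in X_{\rm reg}$ with $\delta=\pi$, where $m$-equivariant forms with $\tilde v\in\Omega^{0,q-1}_0(\tilde D)$ do extend globally) to become a proof.
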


\begin{proof}
Let $u=\sum_{|J|=q}^\prime u_Jd\overline z^J$. Then $\overline\partial_b u=\sum_{|J|=q}^\prime\left(\frac{\partial u_J}{\partial\overline z_j}-i\frac{\partial\varphi(z)}{\partial\overline z_j}\frac{\partial u_J}{\partial\theta}\right)d\overline z_j\wedge d\overline z^J.$ By the assumption of the Lemma~\ref{b1}, $Tu=imu$ which implies that on $\frac{\partial u_J}{\partial\theta}=imu_J$ on $D$ for every $J$. Then
\begin{equation}\label{b3}
\begin{split}
\overline\partial_b u&=\sum\nolimits_{|J|=q}^\prime\sum_{j=1}^{n-1}\left(\frac{\partial u_J}{\partial\overline z_j}+m\frac{\partial\varphi(z)}{\partial\overline z_j}u_J\right)d\overline z_j\wedge d\overline z^J\\
&=e^{im\theta}\sum\nolimits_{|J|=q}^\prime\sum_{j=1}^{n-1}\left(\frac{\partial \tilde u_J}{\partial\overline z_j}+m\frac{\partial\varphi(z)}{\partial\overline z_j}\tilde u_J\right)d\overline z_j\wedge d\overline z^J,
\end{split}
\end{equation}
where $u=e^{im\theta}\tilde u(z)$ on $D$, $\tilde u(z)=\sideset{}{'}\sum_{|J|=q}\tilde u_J(z)d\overline z^J$.
Set $v(z)=e^{m\varphi}\tilde u(z)=\sideset{}{'}\sum_{|J|=q}v_J(z)d\overline z^J$. Then
\begin{equation}\label{b2}
\frac{\partial v_J}{\partial\overline z_j}=\frac{\partial}{\partial\overline z_j}(\tilde u_J(z)e^{m\varphi})=e^{m\varphi}\left(\frac{\partial\tilde u_J}{\partial\overline z_j}+m\frac{\partial\varphi(z)}{\partial\overline z_j}\tilde u_J\right).
\end{equation}
Substituting (\ref{b2}) to (\ref{b3}) we get the conclusion of the first identity of Lemma~\ref{b1}.

Since $\overline{\partial}^\ast_bu\in\Omega^{0,q-1}_m(X)$, on $D$, we write $\overline{\partial}^\ast_bu=e^{im\theta}\tilde v(z)$, $\tilde v(z)\in\Omega^{0,q-1}(\tilde D)$. Take $\chi(\theta)\in C^\infty_0((-\delta,\delta))$ with $\int\chi(\theta)d\theta=1$. Let $g\in\Omega^{0,q-1}_0(\tilde D)$. We have
\begin{equation}\label{e-gue150620fa}
\begin{split}
&(\overline{\partial}^\ast_bu|e^{-2m\varphi(z)}g(z)\chi(\theta)e^{im\theta})=(e^{im\theta}\tilde v(z)|e^{-2m\varphi(z)}g(z)\chi(\theta)e^{im\theta})=(\tilde v(z)|g(z))_{2m\varphi}.
\end{split}
\end{equation}
On the other hand, from the proof first identity of Lemma~\ref{b1}, we have
\begin{equation}\label{e-gue150620faI}
\begin{split}
&(\overline{\partial}^\ast_bu|e^{-2m\varphi(z)}g(z)\chi(\theta)e^{im\theta})
=(u|\ddbar_b(e^{-2m\varphi(z)}g(z)\chi(\theta)e^{im\theta}))\\
&=(u|\chi(\theta)e^{im\theta}e^{-m\varphi(z)}\ddbar(e^{-m\varphi(z)}g(z)))
+(u|(-i)\chi^{\prime}(\theta)e^{im\theta}e^{-2m\varphi(z)}
\overline\partial\varphi\wedge g(z))\\
&=(e^{m\varphi(z)}\tilde u(z)|\ddbar(e^{-m\varphi(z)}g(z))_{2m\varphi}=(\overline{\partial}^{\ast, 2m\varphi}(e^{m\varphi}\tilde u)|e^{-m\varphi(z)}g(z))_{2m\varphi}\\
&=(e^{-m\varphi(z)}\overline{\partial}^{\ast, 2m\varphi}(e^{m\varphi}\tilde u)|g(z))_{2m\varphi}.
\end{split}
\end{equation}
From \eqref{e-gue150620fa} and \eqref{e-gue150620faI}, we get
\[(\tilde v(z)|g(z))_{2m\varphi}=(e^{-m\varphi(z)}\overline{\partial}^{\ast, 2m\varphi }(e^{m\varphi}\tilde u)|g(z))_{2m\varphi},\ \ \forall g\in\Omega^{0,q}_0(\tilde D),\]
and hence
\[e^{-im\theta}\ddbar^\ast_bu=\tilde v=e^{-m\varphi(z)}\overline{\partial}^{\ast, 2m\varphi}(e^{m\varphi}\tilde u)\ \ \mbox{on $D$}.\]
We get the second identity in Lemma~\ref{b1}. The third identity can be deduced directly from the other two identities.

\end{proof}

For any $u\in\Omega^{0, q}(X), u=\sum_{|J|=q}^\prime u_J(z, \theta)e^J(z).$ Here $J=(j_1, \cdots, j_q)$ with $1\leq j_1<\cdots<j_q\leq n-1$, $e^J=e^{j_1}\wedge\cdots\wedge e^{j_q}$ and $\{e_j\}_{j=1}^{n-1}$ is the orthonormal frame chosen in Lemma~\ref{b5}. Set $S^q_{m, J}(x)=\sup\limits_{u\in\mathcal H^q_{b, m}(X), \|u\|=1}|u_J(x)|^2$ which is the extremal function along the direction $e^J$. We can repeat the proof of Lemma 2.1 in~\cite{HM12} and conclude that
\begin{lemma}\label{l-b6}
For every local orthonormal frame $\{e^J:|J|=q,~\text{strictly increasing}\}$ of $T^{\ast0,q}X$ over an open set $D$, we have for $y\in D$
\begin{equation}
\Pi_{m}^q(y)=\sum\nolimits_{|J|=q}^\prime S^q_{m, J}(y).
\end{equation}
\end{lemma}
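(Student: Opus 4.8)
The plan is to exploit that, by Theorem~\ref{gI}, $\mathcal{H}^q_{b,m}(X)$ is a finite-dimensional Hilbert space with inner product $(\cdot|\cdot)$ all of whose elements are smooth $(0,q)$-forms, so that for each $x\in D$ and each strictly increasing multi-index $J$ with $|J|=q$ the map $u\mapsto u_J(x)$ is a well-defined linear functional on $\mathcal{H}^q_{b,m}(X)$. Fix $y\in D$ and such a $J$. By the Riesz representation theorem there is a unique $F_{J,y}\in\mathcal{H}^q_{b,m}(X)$ with $u_J(y)=(u\,|\,F_{J,y})$ for every $u\in\mathcal{H}^q_{b,m}(X)$. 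Writing an orthonormal basis $f_1,\dots,f_{d_m}$ of $\mathcal{H}^q_{b,m}(X)$ as $f_j=\sum\nolimits_{|K|=q}^\prime(f_j)_K\,e^K$ on $D$, one computes $F_{J,y}=\sum_{j=1}^{d_m}\overline{(f_j)_J(y)}\,f_j$, whence $\norm{F_{J,y}}^2=\sum_{j=1}^{d_m}\abs{(f_j)_J(y)}^2$.

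Next I would apply the Cauchy--Schwarz inequality: $\abs{u_J(y)}=\abs{(u\,|\,F_{J,y})}\le\norm{u}\,\norm{F_{J,y}}$, with equality when $u$ is a multiple of $F_{J,y}$; since the unit sphere of the finite-dimensional space $\mathcal{H}^q_{b,m}(X)$ is compact, the supremum defining $S^q_{m,J}(y)$ is attained, and therefore
\[
S^q_{m,J}(y)=\norm{F_{J,y}}^2=\sum_{j=1}^{d_m}\abs{(f_j)_J(y)}^2.
\]
This is exactly the computation carried out in Lemma~2.1 of \cite{HM12}, which the present lemma invokes; the transversal $S^1$-structure plays no role at this step beyond guaranteeing, via Theorem~\ref{gI}, that $\mathcal{H}^q_{b,m}(X)$ has the required properties.

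Finally I would sum this identity over all strictly increasing $J$ with $|J|=q$ and interchange the two finite sums, using that $\{e^J:|J|=q,\ J\ \text{strictly increasing}\}$ is a pointwise orthonormal frame of $T^{\ast0,q}X$ on $D$, so that $\abs{f_j(y)}^2=\sum\nolimits_{|J|=q}^\prime\abs{(f_j)_J(y)}^2$ for each $j$. This yields
\[
\sum\nolimits_{|J|=q}^\prime S^q_{m,J}(y)=\sum_{j=1}^{d_m}\sum\nolimits_{|J|=q}^\prime\abs{(f_j)_J(y)}^2=\sum_{j=1}^{d_m}\abs{f_j(y)}^2=\Pi^q_m(y),
\]
and both sides are independent of the chosen orthonormal basis, as already noted for $\Pi^q_m$. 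I do not expect a genuine obstacle here: the only points needing care are the smoothness and finite-dimensionality of $\mathcal{H}^q_{b,m}(X)$ supplied by Theorem~\ref{gI} (needed both for the functionals $u\mapsto u_J(y)$ to be defined and for the extremum to be attained), and the fact that $\{e^J\}$ is merely orthonormal rather than coordinate-induced, which is precisely what makes the Pythagorean identity $\abs{f_j}^2=\sum\nolimits_{|J|=q}^\prime\abs{(f_j)_J}^2$ hold pointwise.
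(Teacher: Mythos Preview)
Your proposal is correct and follows exactly the approach the paper intends: the paper itself gives no argument here, merely deferring to Lemma~2.1 in \cite{HM12}, and your Riesz representation plus Cauchy--Schwarz computation is precisely that proof, with the needed inputs (finite-dimensionality and smoothness of $\mathcal{H}^q_{b,m}(X)$) correctly identified as coming from Theorem~\ref{gI}.
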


Now we are going to prove Theorem~\ref{m} and the weak Morse inequality.

\begin{proof}
Fix $x_0\in X$ and choose canonical local patch $D=\{(z, \theta): |z|<\varepsilon, |\theta|<\delta\}$ with canonical coordinates $(z, \theta,\varphi)$ such that $(z,\theta,\varphi)$ is trivial at $x_0$. For any $u\in\mathcal{H}^q_{b, m}(X)$ with $\|u\|=1$, on $D$ we have
$u(z, \theta)=\tilde u(z)e^{im\theta}$. Set $v_m(z)=e^{m\varphi(z)}\tilde u(z), z\in D.$ Then from Lemma~\ref{b1}, $\Box^{(q)}_{b, m}u(z, \theta)=0$ and $\|u\|=1$ we deduce that on $D$,
\begin{equation}\label{j5}
\Box^{(q)}_{2m\varphi}v_m(z)=0~\text{and}~\int_{\tilde D}|v_m(z)|^2e^{-2m\varphi(z)}\lambda(z)dv(z)\leq\frac{1}{2\delta},
\end{equation}
where $\Box^{(q)}_{2m\varphi}$ is as in \eqref{e-gue150620f}.
Set $\tilde v_{(m)}(z)=m^{\frac{-(n-1)}{2}}e^{m\varphi(\frac{z}{\sqrt m})}\tilde u(\frac{z}{\sqrt m})$. Then by (\ref{s}) and (\ref{j5}) we have
\begin{equation}\label{k1}
\Box^{(q)}_{(m)}\tilde v_{(m)}(z)=0 ~\text{and}~\int_{\tilde D_{r }}|\tilde v_{(m)}(z)|^2e^{-2m\varphi(\frac{z}{\sqrt m})}\lambda(\frac{z}{\sqrt{m}})dv(z)\leq\frac{1}{2\delta},
\end{equation}
for any $r<\log m$ when $m$ is large. From Proposition~\ref{k2} and (\ref{k1}), there exists a constant $C_{r, s, \delta}^\prime>0$ independent of $m$ and the point $x_0$ such that
\begin{equation}\label{k3}
\|\tilde v_{(m)}\|^2_{2mF_m^\ast\varphi, s+2, \tilde D_r}\leq C^{\prime}_{r, s, \delta}.
\end{equation}
Since $X$ is compact we can choose $\delta$ which is independent of $x_0$. For $s\geq 2n-2$, from (\ref{k3}) and by Sobolev embedding theorem, there exists a constant $C^\prime$ which is independent of $x_0, m$ such that
\begin{equation}\label{k4}
m^{-(n-1)}|u(x_0)|^2=m^{-(n-1)}|\tilde u(0)|^2=|\tilde v_{(m)}(0)|^2\leq C^\prime.
\end{equation}
From (\ref{k4}) and Lemma~\ref{l-b6}, we get the conclusion of the first part of Theorem~\ref{m}.

Fix $|J|=q$, $J$ is strictly increasing. There exists a sequence $u_{m_k}\in\mathcal H^q_{b, m_k}(X)$ with $\|u_{m_k}\|=1$ such that
\begin{equation}\label{b6}
\limsup\limits_{m\rightarrow\infty}m^{-(n-1)}S^q_{m, J}(x_0)=\lim_{k\rightarrow\infty}m_k^{-(n-1)}|u_{m_k, J}(x_0)|^2.
\end{equation}
$Tu_{m_k}=im_k u_{m_k}$ implies that on $D$, we have $u_{m_k}=\tilde u_{m_k}(z)e^{im_k\theta}$. Since $\Box^{(q)}_{b, m_k}(u_{m_k})=0$, from Lemma~\ref{b1} we have
\begin{equation}\label{b7}
\Box^{(q)}_{2m_k\varphi}(e^{m_k\varphi}\tilde u_{m_k}(z))=0.
\end{equation}
Moreover,
\begin{equation}\label{b8}
\begin{split}
\int_{\tilde D}|e^{m_k\varphi}\tilde u_{m_k}(z)|^2&e^{-2m_k\varphi(z)}\lambda(z)dv(z)=\int_{\tilde D}|\tilde u_{m_k}(z)|^2\lambda(z)dv(z)\\
&=\frac{1}{2\delta}\int_D|u_{m_k}|^2\lambda(z)dv(z)d\theta\leq\frac{1}{2\delta}.
\end{split}
\end{equation}
Similarly, set
$\tilde v_{(m_k)}(z)=m_k^{\frac{-(n-1)}{2}}e^{m_k\varphi(\frac{z}{\sqrt m_k})}\tilde u_{m_k}(\frac{z}{\sqrt m_k})$. Then from (\ref{s}), (\ref{b7}) and (\ref{b8}) we have
\begin{equation}\label{h1}
\Box^{(q)}_{(m_k)}\tilde v_{(m_{k})}=0~\text{on}~\tilde D_{\log m_k}, \end{equation} and
\begin{equation}\label{h2}
\int_{\tilde D_{\log m_k}}|\tilde v_{(m_k)}(z)|_{F_{m_k}^\ast}^2e^{-2m_k\varphi(\frac{z}{\sqrt m_k})}\lambda(\frac{z}{\sqrt m_k})dv(z)\leq\frac{1}{2\delta}.
\end{equation}
For any $r>0$ with $\tilde D_r\subset\tilde D_{\log m}$ when $m>>1$, by Garding's inequality we have
\begin{equation}\label{k5}
\|\tilde v_{(m_k)}\|^2_{2m_kF_{m_k}^\ast\varphi, s+2, \tilde D_{r}}\leq C_{s, r}\left(\|\tilde v_{(m_k)}\|^2_{2m_kF_{m_k}^\ast\varphi, \tilde D_{2r}}+\|\Box^{(q)}_{(m_k)}\tilde v_{(m_k)}\|^2_{2m_kF_{m_k}^\ast\varphi, s, \tilde D_{2r}}\right),
\end{equation}
where $C_{r,s}>0$ is a constant independent of $m_k$.
Combining (\ref{h1}), (\ref{h2}) and (\ref{k5}) we have $\|\tilde
v_{(m_k)}\|^2_{2mF_m^\ast\varphi, s+2, \tilde D_{r}}\leq C_{r, s,
\delta}$, where $C_{r,s,\delta}>0$ is a constant independent of $m_k$. We extend $\tilde v_{(m_k)}$ to $\mathbb C^{n-1}$ by zero outside $\tilde D_{\log m_k}$ still denoted by $\tilde v_{(m_k)}$. By  Sobolev compact embedding theorem, there exists a subsequence of $\{\tilde v_{(m_k)}(z)\}$ which is denoted by $\{v_{(m_{k_j})}(z)\}$ such that
\begin{equation}\label{b9}
\tilde v_{(m_{k_j})}\rightarrow v=\sideset{}{'}\sum_{|J|=q}v_J(z)d\overline z^J\in\Omega^{0, q}(\mathbb C^{n-1})~\text{in }~C^\infty(K)~\text {topology},~\ \ \forall K\Subset\mathbb C^{n-1}.
\end{equation}
From \eqref{h1}, \eqref{h2}, \eqref{b9} and \eqref{j7}, we can check that
\begin{equation}\label{e-gue150618a}
\Box^{(q)}_{2\Phi_0}v=0
\end{equation}
and
\begin{equation}\label{c1}
\int_{\mathbb C^{n-1}}|v(z)|^2e^{-2\Phi_0(z)}dv(z)\leq\frac{1}{2\delta}.
\end{equation}
Recall that $\Phi_0(z)=\sum_{j=1}^{n-1}\lambda_j|z_j|^2.$
Combining \eqref{e-gue150618a} and (\ref{c1}), we have
\begin{equation}\label{e-gue150618f}
|v_J(0)|^2\leq\frac{1}{2\delta}S^q_{\mathbb C^{n-1}, J}(0).
\end{equation}
Here, $S^q_{J, \mathbb C^{n-1}}(0)$ is the extremal function along the direction $d\overline z^J$ on  the model space $\mathbb C^{n-1}$ with respect to complex Laplacian $\Box^{(q)}_{2m\Phi_0}$, that is
\[S^q_{J, \mathbb C^{n-1}}(0)=\sup\{|u_J(0)|^2: u\in\Omega^{0,q}(\mathbb C^{n-1}), \Box^{(q)}_{2\Phi_0}u=0, \int|u|^2e^{-2\Phi_0(z)}dv(z)=1\}.\]
From (\ref{b6}), \eqref{b9} and \eqref{e-gue150618f}, we have
\begin{equation}\label{c2}
\limsup\limits_{m\rightarrow\infty}m^{-(n-1)}S^q_{m, J}(x_0)=\lim_{j\rightarrow\infty}m_{k_j}^{-(n-1)}|u_{m_{k_j}, J}(x_0)|^2=\lim_{j\rightarrow\infty}|\tilde v_{(m_{k_j})}(0)|^2=|v_J(0)|^2\leq\frac{1}{2\delta}S^q_{\mathbb C^{n-1}, J}(0).
\end{equation}
From \eqref{c2} and Lemma~\ref{l-b6}, we deduce that
\begin{equation}\label{e-gue150618fI}
\limsup\limits_{m\rightarrow\infty}m^{-(n-1)}\Pi^q_m(x_0)\leq\sideset{}{'}\sum_{|J|=q}\limsup\limits_{m\rightarrow\infty}m^{-(n-1)}S^q_{m, J}(x_0)\leq\frac{1}{2\delta}\sum\nolimits_{|J|=q}^\prime S^q_{J, \mathbb C^{n-1}}(0).
\end{equation}
By Proposition 4.3 in \cite{Be04}, we have that
\begin{equation}\label{e-gue150618fII}
\sum\nolimits_{|J|=q}^\prime S^q_{J, \mathbb C^{n-1}}(0)=\frac{1}{(2\pi)^{n-1}}|2\lambda_1\cdots2\lambda_{n-1}|\cdot 1_{X(q)}(x_0).
\end{equation}
From \eqref{e-gue150618fI} and \eqref{e-gue150618fII}, we have
\begin{equation}\label{g3}
\limsup_{m\rightarrow\infty}m^{-(n-1)}\Pi^q_m(x_0)\leq\frac{1}{2\delta}\cdot\frac{1}{2\pi^{n-1}}|\det \mathcal L_{x_0}|\cdot 1_{X(q)}(x_0).
\end{equation}
When $x_0\in X_k$, by Lemma~\ref{l-gue150616}, $\delta$ can be chosen to equal to $\frac{\pi}{k}-\epsilon$, for every $\epsilon>0$. From this observation and (\ref{g3}), we deduce that
\begin{equation}\label{g4}
\limsup_{m\rightarrow\infty}m^{-(n-1)}\Pi^q_m(x)\leq\frac{k}{2\pi}\cdot\frac{1}{2\pi^{n-1}}|\det \mathcal L_x|\cdot 1_{X(q)}(x), \forall x\in X_k
\end{equation}
and Theorem~\ref{m} follows then.

From Lemma~\ref{g2}, Theorem~\ref{m} and by Fatou's lemma we obtain the weak Morse inequalities and get the conclusion of Theorem~\ref{a4}.
\end{proof}

Now we are going to prove Theorem~\ref{o} and the strong Morse inequalities.

\subsection{Proofs of Theorem~\ref{o} and the strong Morse inequalities}

In this section, we will establish the strong Morse inequalities on CR manifolds with transversal CR $S^1$-action. We first recall some well known facts. From Theorem~\ref{gI}, we know that $\Box^{(q)}_{b, m}$ has discrete spectrum, each eigenvalues occurs with finite multiplicity and all the eigenforms are smooth. For $\sigma\in\mathbb R$, let $\mathcal H^q_{b, m, \leq\sigma}(X)$ be defined as in (\ref{c4}). Similarly, let $\mathcal H^q_{b, m, >\sigma}(X)$ denote the space spanned by the eigenforms of $\Box^{(q)}_{b, m}$ whose eigenvalues are $>\sigma$.

Let $Q_{b, m}$ be the Hermitian form on $\Omega^{0, q}_m(X)$ defined for $u, v\in\Omega^{0, q}_m(X)$ by
\begin{equation*}
Q_{b, m}(u, v)=(\overline\partial_b u|\overline\partial_b u)+(\overline\partial_b^\ast u|\overline\partial_b^\ast v)+(u|v)=(\Box^{(q)}_{b, m}u|v)+(u, v).
\end{equation*}
Let $\overline{\Omega^{0, q}_m(X)}$ be the completion of $\Omega^{0, q}_m(X)$ under the $Q_{b, m}$ in $L^2_{(0, q), m}(X)$. For $\lambda>0$, we have the orthogonal spectral decomposition with respect to $Q_{b, m}$
\begin{equation}
\overline{\Omega^{0, q}_m(X)}=\mathcal H^q_{b, m, \leq\sigma}\bigoplus\overline{\mathcal H^q_{b, m, >\sigma}(X)},
\end{equation}
where $\overline{\mathcal H^q_{b, m, >\sigma}(X)}$ is the completion of $\mathcal H^q_{b, m, >\sigma}(X)$ under $Q_{b, m}$ in $L^2_{(0, q), m}(X)$. For the proof of Theorem~\ref{o}, we need the following

\begin{proposition}\label{nn}
For any $p\in X(q)\cap X_{\rm reg}$, there exists $\alpha_m\in\Omega^{0, q}_m(X)$ such that
\begin{equation}
\begin{split}
&(1) ~\lim\limits_{m\rightarrow\infty}m^{-(n-1)}|\alpha_m(p)|^2=\frac{1}{2\pi^n}|\det \mathcal L_p|.\\
&(2) ~\lim_{m\rightarrow\infty}\|\alpha_m\|^2=1.\\
&(3) ~\lim_{m\rightarrow\infty}\left\|\left(m^{-1}\Box^{(q)}_{b, m}\right)^k\alpha_m\right\|=0, \forall k\in\mathbb N.\\
&(4) ~\text{There exists}~ \delta_m~\text{independent of}~p, \delta_m\rightarrow
0~\text{such that}\\
&\left(m^{-1}\Box^{(q)}_{b, m}\alpha_m\big|\alpha_m\right)\leq\delta_m.
\end{split}
\end{equation}
\end{proposition}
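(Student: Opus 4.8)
The plan is to construct $\alpha_m$ by transplanting an explicit Gaussian model form --- the extremal harmonic form of the model Kohn Laplacian $\Box^{(q)}_{2\Phi_0}$ on $\mathbb C^{n-1}$ --- into a tube around the $S^1$-orbit of $p$, after a $\sqrt m$-rescaling and a cut-off, and then to read off (1)--(4) from the scaling machinery of Section~\ref{kk} together with Lemma~\ref{b1}. Since $p\in X_{\rm reg}$, fix by Lemma~\ref{l-gue150615} canonical coordinates $(z,\theta,\varphi)$ trivial at $p$ on $D=\{(z,\theta):|z|<\varepsilon_0,\ |\theta|<\pi\}$; as in the proof of that lemma, $(z,\theta)\mapsto e^{i\theta}\circ(z,0)$ is injective for $|z|<\varepsilon_0$ and $\theta\in\mathbb R/2\pi\mathbb Z$, with open $S^1$-invariant image $W$. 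Let $\lambda_1,\dots,\lambda_{n-1}$ be the eigenvalues of $\mathcal L_p$ in these coordinates; since $p\in X(q)$, exactly $q$ of them, indexed by a strictly increasing $J_-$ with $|J_-|=q$, are negative. Put $\Phi_0(z)=\sum_j\lambda_j|z_j|^2$, fix $\chi\in C^\infty_0(\tilde D)$ with $\chi\equiv1$ near $0$ and a constant $c>0$ to be chosen, and define on $D$
\[\alpha_m:=m^{\frac{n-1}{2}}\,c\,e^{-m\varphi(z)}\chi(z)\exp\!\Big(2m\sum_{j\in J_-}\lambda_j|z_j|^2\Big)\,e^{im\theta}\,e^{J_-}(z),\]
where $\{e^j\}$ is the orthonormal frame of $T^{\ast0,1}X$ from Lemma~\ref{b5}. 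Because $e^{J_-}(z)$ is $\theta$-independent on $D$ and $\chi$ has compact support, the condition $T\alpha_m=im\alpha_m$ together with $e^{2\pi i m}=1$ lets $\alpha_m$ extend (by $S^1$-translation) to a smooth form in $\Omega^{0,q}_m(X)$ supported in $W$.

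For (1) and (2): on $D$ write $\alpha_m=e^{im\theta}\tilde u_m(z)$ and $v_m:=e^{m\varphi}\tilde u_m=m^{\frac{n-1}{2}}c\,\chi(z)\exp(2m\sum_{J_-}\lambda_j|z_j|^2)e^{J_-}(z)$. Since $|\alpha_m|$ is $\theta$-independent and $dv_X=\lambda(z)\,dv(z)\,d\theta$ with $\lambda(0)=1$ (Lemma~\ref{b5} and \eqref{e-gue150303}),
\[\|\alpha_m\|^2=2\pi\int_{\tilde D}|\tilde u_m|^2\lambda\,dv=2\pi\,\|v_m\|^2_{2m\varphi}=2\pi\,\|\tilde v_{(m)}\|^2_{2mF_m^\ast\varphi},\qquad \tilde v_{(m)}:=m^{-\frac{n-1}{2}}F_m^\ast v_m,\]
using that the change of variables $z\mapsto z/\sqrt m$ turns $(\cdot|\cdot)_{2m\varphi}$ into $m^{-(n-1)}(\cdot|\cdot)_{2mF_m^\ast\varphi}$. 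From $\varphi(z)=\Phi_0(z)+O(|z|^3)$, \eqref{j6}, and $e^{J_-}(z)=d\overline z^{J_-}+O(|z|)$ one checks $\tilde v_{(m)}\to v_0:=c\exp(2\sum_{J_-}\lambda_j|z_j|^2)d\overline z^{J_-}$ in $C^\infty_{\rm loc}(\mathbb C^{n-1})$ and, against the Gaussian weight $e^{-2\Phi_0}$, in $L^2$; a direct computation gives $\Box^{(q)}_{2\Phi_0}v_0=0$ and $\|v_0\|^2_{2\Phi_0}=|c|^2\pi^{n-1}/|\det\mathcal L_p|$, so the choice $|c|^2=|\det\mathcal L_p|/(2\pi^n)$ yields $\|\alpha_m\|^2\to2\pi\|v_0\|^2_{2\Phi_0}=1$, which is (2). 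Since $\varphi(0)=0$, $\chi(0)=1$, $|e^{J_-}(0)|=1$, we get $|\alpha_m(p)|^2=m^{n-1}|c|^2$, hence $m^{-(n-1)}|\alpha_m(p)|^2\to|c|^2=\tfrac1{2\pi^n}|\det\mathcal L_p|$, which is (1).

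For (3) and (4): iterating Lemma~\ref{b1} gives $(\Box^{(q)}_{b,m})^k\alpha_m=e^{im\theta}e^{-m\varphi}(\Box^{(q)}_{2m\varphi})^k v_m$ on $W$, so by \eqref{s} and the same change of variables
\[\big\|(m^{-1}\Box^{(q)}_{b,m})^k\alpha_m\big\|^2=2\pi\,\big\|(\Box^{(q)}_{(m)})^k\tilde v_{(m)}\big\|^2_{2mF_m^\ast\varphi},\qquad \big(m^{-1}\Box^{(q)}_{b,m}\alpha_m\,\big|\,\alpha_m\big)=2\pi\,\big(\Box^{(q)}_{(m)}\tilde v_{(m)}\,\big|\,\tilde v_{(m)}\big)_{2mF_m^\ast\varphi}.\]
Now write $\Box^{(q)}_{(m)}=\Box^{(q)}_{2\Phi_0}+\epsilon_m\mathcal P_m$ as in \eqref{j7} ($\epsilon_m\to0$, $\mathcal P_m$ of coefficients bounded uniformly in $m$) and $\tilde v_{(m)}=v_0+r_m$, where $r_m$ collects the cut-off error $c\,e^{2\sum_{J_-}\lambda_j|z_j|^2}(\chi(z/\sqrt m)-1)d\overline z^{J_-}$, supported in $\{|z|\gtrsim\sqrt m\}$ and therefore exponentially small in every weighted Sobolev norm, and the frame error $c\,\chi(z/\sqrt m)e^{2\sum_{J_-}\lambda_j|z_j|^2}\bigl(e^{J_-}(z/\sqrt m)-d\overline z^{J_-}\bigr)$, which is $O(1/\sqrt m)$ in every weighted Sobolev norm on $\tilde D_{\log m}$. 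Since $\Box^{(q)}_{2\Phi_0}v_0=0$, expanding $(\Box^{(q)}_{(m)})^k$ shows that every surviving term carries a factor $\epsilon_m\mathcal P_m$ applied to $v_0$, or a derivative of $r_m$; combined with the (readily checked, from the explicit formula) $m$-uniform bound for $\|\tilde v_{(m)}\|_{2mF_m^\ast\varphi,s,\tilde D_{\log m}}$ for each $s$, this forces $\|(\Box^{(q)}_{(m)})^k\tilde v_{(m)}\|_{2mF_m^\ast\varphi}\to0$ and likewise $(\Box^{(q)}_{(m)}\tilde v_{(m)}|\tilde v_{(m)})_{2mF_m^\ast\varphi}\to0$, giving (3) and (4) with $\delta_m:=2\pi\bigl(\Box^{(q)}_{(m)}\tilde v_{(m)}|\tilde v_{(m)}\bigr)_{2mF_m^\ast\varphi}$.

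The one genuinely delicate point is the claim in (4) that $\delta_m$ can be taken \emph{independent of} $p\in X(q)\cap X_{\rm reg}$. This requires that the error estimates above --- the constant in \eqref{j6}--\eqref{j7}, the support of the scaled cut-off, the $O(|z|/\sqrt m)$ frame remainder, and the weighted Sobolev bounds on $\tilde v_{(m)}$ --- be controlled by finitely many geometric quantities (low-order Taylor coefficients, at $p$, of $\varphi$, of the density $\lambda$, and of the frame $e^j$), which are uniformly bounded on the compact manifold $X$; making all the convergences above uniform in $p$ via this compactness bookkeeping is the main obstacle, while the remainder is the standard localized-model computation following \cite{Be04}, \cite{HM12}, \cite{H15}, \cite{HL15}.
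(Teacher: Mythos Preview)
Your strategy is the same as the paper's---transplant the extremal harmonic form of $\Box^{(q)}_{2\Phi_0}$ into canonical coordinates, rescale, and read off (1)--(4) from the scaling machinery---and your $\theta$-handling is in fact cleaner: since $p\in X_{\rm reg}$, extending $g(z)e^{im\theta}$ by $2\pi$-periodicity gives an element of $\Omega^{0,q}_m(X)$ directly, whereas the paper introduces an auxiliary $\theta$-cutoff $\eta_m(\theta)$ and then projects via $Q^{(q)}_m$, which forces the extra commutator estimate in Lemma~\ref{g5}. Your route bypasses that lemma entirely.

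There is, however, a gap in your treatment of the $z$-cutoff for (3)--(4). The paper does \emph{not} use a fixed cutoff $\chi(z)$: it uses the shrinking cutoff $\chi(\sqrt m\,z/\log m)$, so that after rescaling the support of $\tilde v_{(m)}$ lies in $\tilde D_{\log m}$, exactly where the identity \eqref{j7}, $\Box^{(q)}_{(m)}=\Box^{(q)}_{2\Phi_0}+\epsilon_m\mathcal P_m$ with $\epsilon_m\to0$ and $\mathcal P_m$ uniformly bounded, is stated and holds. With your fixed $\chi(z)$ the rescaled support has radius of order $\sqrt m$, and on that scale $2mF_m^\ast\varphi-2\Phi_0=O(|z|^3/\sqrt m)$ is of order $m$, so neither $\epsilon_m\to0$ nor the uniform boundedness of the coefficients of $\mathcal P_m$ survives. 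Your decomposition $\tilde v_{(m)}=v_0+r_m$ does not repair this: $v_0$ lives on all of $\mathbb C^{n-1}$, while $\Box^{(q)}_{(m)}$ and the weight $e^{-2mF_m^\ast\varphi}$ are only defined on $\tilde D_{\sqrt m\varepsilon_0}$, so the ``cut-off error'' term you write is not in the space where the weighted norm makes sense, and the claim that $\epsilon_m\mathcal P_m v_0$ is small uses \eqref{j7} outside its domain of validity. The fix is either to adopt the paper's shrinking cutoff $\chi(\sqrt m z/\log m)$, or to split the weighted integral explicitly into $\{|z|\le\log m\}$ (where \eqref{j7} applies) and $\{\log m<|z|\lesssim\sqrt m\}$ (where one must show directly, using the Gaussian decay of $|\tilde v_{(m)}|^2e^{-2mF_m^\ast\varphi}$ and choosing $\varepsilon_0$ small enough that the cubic remainder of $\varphi$ is dominated by the quadratic part, that all contributions are exponentially small). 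The paper's choice is the cleaner of the two.
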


We now fix $p\in X(q)\cap X_{\rm reg}$. Let $D=\tilde D\times(-\pi,\pi)$ be a canonical local patch with canonical coordinates $(z,\theta,\varphi)$ such that $(z,\theta,\varphi)$ is trivial at $p$. We take $D=\{(z,\theta)\in\mathbb C^{n-1}: |z|<\varepsilon, |\theta|<\pi\}=\tilde D\times(-\pi,\pi)$. By Lemma~\ref{l-gue150615}, this is always possible.
Until further notice, we will work with $(z,\theta,\varphi)$ and we will use the same notations as in Section 1.4.
Before the proof of Proposition~\ref{nn}, we claim that one can find
$u(z)\in\Omega^{0, q}(\mathbb C^{n-1})$ such that
\begin{equation}\label{c6}
\begin{split}
\Box^{(q)}_{2\Phi_0}&u(z)=0,
\int_{\mathbb C^{n-1}}|u(z)|^2e^{-2\Phi_0(z)}dv(z)=\frac{1}{2\pi},\\
&\text{and}~
|u(0)|^2=\frac{1}{2\pi^n}|\lambda_1(p)\cdots\lambda_{n-1}(p)|.
\end{split}
\end{equation}
Recall that $\Phi^{(q)}_{2\Phi_0}$ is given by \eqref{e-gue150617I}.
Proof of the claim: We assume that the first $q$ eigenvalues of the Levi-form are negative, that is, $\lambda_1\leq\cdots\leq\lambda_q<0<\lambda_{q+1}\leq\cdots\leq\lambda_{n-1}.$ Set
\begin{equation}\label{e-gue150619}
u(w)=\left(\frac{|2\lambda_1\cdots2\lambda_{n-1}|}{(2\pi)^{n-1}}\cdot\frac{1}{2\pi}\right)^{\frac12}
e^{\sum_{j=1}^q\lambda_j|w_j|^2}d\overline w_1\wedge\cdots\wedge d\overline w_q.
\end{equation}
It is easy to check that the form $u(w)$ satisfies the claim. Now we are going to prove Proposition~\ref{nn}.

\begin{proof}
We choose cut-off function $\chi$ such that $\chi(z)\in C_0^\infty(\mathbb C^{n-1})$ with $\chi\equiv1$ in a neighborhood of $\overline{D_{\frac12}}$ and supp$\chi\Subset D_1$. Here, $D_r=\{z\in\mathbb C^{n-1}: |z_1|<r, \cdots, |z_{n-1}|<r\}$. Choose  a function $\eta(t)\in C^\infty(\mathbb R)$ satisfying $0\leq \eta(t)\leq 1$ such that $\eta(t)\equiv1$ when $t\geq \pi^2$ and $\eta(t)\equiv0$ when $t<\frac{\pi^2}{4}.$  Set $\eta_m(\theta)=\eta((\pi^2-\theta^2)\log^2 m), m\in\mathbb N.$ Then $\eta_m(\theta)$ is a family of cut-off functions with supp$\eta_m\Subset(-\pi, \pi)$. Moreover, we have that $\lim\limits_{m\rightarrow\infty}\eta_m(\theta)=1, ~a.e.~\theta\in(-\pi, \pi)$ and $|\eta_m|\leq1, |\eta_m^\prime(\theta)|=O(\log^2 m), |\eta_m^{''}(\theta)|=O(\log^4m)$. Define
\begin{equation}\label{c5}
u_m(z, \theta)=m^{\frac{n-1}{2}}u(\sqrt mz)e^{-m\varphi(z)}\chi\left(\frac{\sqrt mz}{\log m}\right)\eta_m(\theta)e^{im\theta},\end{equation}
where $u(z)\in\Omega^{0,q}(\mathbb C^{n-1})$ is as in \eqref{e-gue150619}.
Then $u_m\in\Omega^{0, q}(X)$ with supp$u_m\Subset D$. Set $\alpha_m=Q^{(q)}_m u_m$. Then on $D$ we have
\begin{equation}
\alpha_m(z, \theta)=\frac{1}{2\pi}\int_{-\pi}^{\pi}u_m(z, t)e^{-imt}dt e^{im\theta}.
\end{equation}
From (\ref{c5}) we have
\begin{equation}
\begin{split}
\|u_m\|^2
&=\int_Xm^{n-1}|u(\sqrt mz)|^2e^{-2m\varphi(z)}\chi^2\left(\frac{\sqrt mz}{\log m}\right)\eta_m^2(\theta)\lambda(z)dv(z)d\theta\\
&=\int_{-\pi}^{\pi}\eta_m^2(\theta)d\theta\int_{D_{\frac{\log m}{\sqrt m}}}m^{n-1}|u(\sqrt mz)|^2e^{-2m\varphi(z)}\chi^2\left(\frac{\sqrt mz}{\log m}\right)\lambda(z)dv(z)\\
&\leq 2\pi\int_{D_{\frac{\log m}{\sqrt m}}}m^{n-1}|u(\sqrt mz)|^2e^{-2m\varphi(z)}\chi^2\left(\frac{\sqrt mz}{\log m}\right)\lambda(z)dv(z).
\end{split}
\end{equation}
Taking limits as $m\rightarrow\infty$ and from the construction of $u(z)$ in (\ref{c6}), we have
\begin{equation}\label{c9}
\limsup_{m\rightarrow\infty}\|u_m\|^2\leq 2\pi\int_{\mathbb C^{n-1}}|u(z)|^2e^{-2\Phi_0(z)}dv(z)=2\pi\times\frac{1}{2\pi}=1.
\end{equation}
Since on $D$
\begin{equation}\label{c7}
\begin{split}
\alpha_m(z, \theta)&=Q_mu_m(z,\theta)=\frac{1}{2\pi}\int_{-\pi}^{\pi}u_m(z, t)e^{-imt}dte^{im\theta}\\
&=\frac{1}{2\pi}\int_{-\pi}^{\pi}m^{\frac{n-1}{2}}u(\sqrt mz)e^{-m\varphi(z)}\chi(\frac{\sqrt mz}{\log m})\eta_m(t)e^{imt}e^{-imt}dte^{im\theta}\\
&=\Bigr(\frac{1}{2\pi}\int_{-\pi}^{\pi}\eta_m(t)dt\Bigr)m^{\frac{n-1}{2}}u(\sqrt mz)e^{-m\varphi(z)}\chi\left(\frac{\sqrt mz}{\log m}\right)e^{im\theta}\\
&=c_mm^{\frac{n-1}{2}}u(\sqrt mz)e^{-m\varphi(z)}\chi\left(\frac{\sqrt mz}{\log m}\right)e^{im\theta}.
\end{split}
\end{equation}
Here $c_m=\frac{1}{2\pi}\int_{-\pi}^{\pi}\eta_{m}(t)dt$. Then by Fatou's lemma, we get $\lim\limits_{m\rightarrow\infty}c_m=1$. We have
\begin{equation}\label{e3}
m^{-(n-1)}|\alpha_m(p)|^2=m^{-(n-1)}|\alpha_m(0,0)|^2=c^2_m|u(0)|^2=c_m^2\frac{|\lambda_1(p)\cdots\lambda_{n-1}(p)|}{2\pi^n}.
\end{equation}
Taking limits in \eqref{e3} as $m\rightarrow\infty$, we get the conclusion of the first part of Proposition~\ref{nn}. From (\ref{c7}), we have
\begin{equation}\label{c8}
\begin{split}
&\lim_{m\rightarrow\infty}\int_D|\alpha_m(z, \theta)|^2\lambda(z)dv(z)d\theta\\
&=\lim_{m\rightarrow\infty}2\pi\int_{\tilde D}|c^2_m|m^{n-1}|u(\sqrt mz)|^2e^{-2m\varphi(z)}\chi^2\left(\frac{\sqrt mz}{\log m}\right)\lambda(z)dv(z)\\
&=2\pi\int_{\mathbb C^{n-1}}|u(z)|^2e^{-2\Phi_0(z)}dv(z)=2\pi\times\frac{1}{2\pi}=1.
\end{split}
\end{equation}
This implies that $\liminf\limits_{m\rightarrow\infty}\|\alpha_m\|^2\geq1$. From (\ref{c9}) and the definition of $\alpha_m$ we have $\|\alpha_m\|^2\leq\|u_m\|^2\leq1$ which implies that $\limsup\limits_{m\rightarrow\infty}\|\alpha_m\|^2\leq 1.$ Thus we have $\lim\limits_{m\rightarrow\infty}\|\alpha_m\|^2=1.$ Thus we get the conclusion of the second part of  Proposition~\ref{nn}. Now we postpone and state the following lemma

\begin{lemma}\label{g5}
\begin{equation}\label{d2}
\begin{split}
\frac{1}{m}\Box^{(q)}_{b}u_m&=\frac{1}{m}\Box^{(q)}_b\left[m^{\frac{n-1}{2}}u(\sqrt mz)e^{-m\varphi(z)}\chi\left(\frac{\sqrt mz}{\log m}\right)e^{im\theta}\eta_m(\theta)\right]\\
&=\frac{1}{m}m^{\frac{n-1}{2}}\Box^{(q)}_b\left[u(\sqrt mz)e^{-m\varphi(z)}\chi\left(\frac{\sqrt mz}{\log m}\right)e^{im\theta}\right]\eta_m(\theta)+\varepsilon_m,
\end{split}
\end{equation}
where $\|\varepsilon_m\|\leq\delta_m$, $\delta_m$ is a sequence independent of $p$ with $\delta_m\To0$ as $m\rightarrow\infty$.
\end{lemma}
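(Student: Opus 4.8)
The plan is to read the right-hand side of \eqref{d2} as $\tfrac1m\,\eta_m\,\Box^{(q)}_b\beta_m$, where
\[
\beta_m(z,\theta):=m^{\frac{n-1}{2}}u(\sqrt mz)\,e^{-m\varphi(z)}\chi\Bigl(\tfrac{\sqrt mz}{\log m}\Bigr)e^{im\theta},
\]
so that $u_m=\beta_m\,\eta_m$ on $D$. Because $\operatorname{supp}\chi(\tfrac{\sqrt m\,\cdot}{\log m})$ forces $|z|\le C\tfrac{\log m}{\sqrt m}$ and $\operatorname{supp}\eta_m\Subset(-\pi,\pi)$, for $m$ large all objects below are supported in a fixed compact subset of $D$ and extend by zero to $X$; the computation is therefore local on $D$. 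Since $\Box^{(q)}_b$ is a (second order) local operator, on $D$ we have the trivial splitting $\Box^{(q)}_b(\beta_m\eta_m)=\eta_m\Box^{(q)}_b\beta_m+[\Box^{(q)}_b,\eta_m]\beta_m$, which is exactly \eqref{d2} with $\varepsilon_m:=\tfrac1m[\Box^{(q)}_b,\eta_m]\beta_m$. Hence the entire content of the lemma is the estimate $\|\varepsilon_m\|\le\delta_m$ for some $\delta_m\to0$ independent of $p$.

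First I would analyse the commutator. Since $\eta_m=\eta_m(\theta)$ and $\Box^{(q)}_b$ is second order, $[\Box^{(q)}_b,\eta_m]$ is a first order differential operator whose coefficients are built from derivatives of $\eta_m$ of order $\le2$ (along the canonical frame $Z_j,\overline Z_j,T$) and from the structure functions of $\Box^{(q)}_b$; in particular these coefficients vanish wherever $\eta_m$ is locally constant, so $[\Box^{(q)}_b,\eta_m]$ is supported in $S_m:=\{(z,\theta)\in D:\operatorname{dist}(\theta,\{\pm\pi\})\le C/\log^2m\}$, whose $\theta$-measure is $O(1/\log^2m)$. The key point — and the only place requiring care — is that no uncompensated power of $m$ survives in $[\Box^{(q)}_b,\eta_m]\beta_m$: by \eqref{e-can} one has $\overline Z_j=\partial_{\overline z_j}-i\,\partial_{\overline z_j}\varphi\,\partial_\theta$ (and similarly for $Z_j$), and $\varphi(z)=\sum_j\lambda_j|z_j|^2+O(|z|^3)$, so every $\partial_\theta$ occurring in $\Box^{(q)}_b$ and in $\overline\partial_b\eta_m$ is multiplied by a factor $\partial_{\overline z_j}\varphi=O(|z|)=O(\log m/\sqrt m)$. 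Combining this with $|\eta_m'|=O(\log^2m)$, $|\eta_m''|=O(\log^4m)$, the pointwise bounds $|Z_j\beta_m|+|\overline Z_j\beta_m|\le C\sqrt m\,(\log m)\,|\beta_m|$ on $\operatorname{supp}\beta_m$ (the bare $T\beta_m=im\beta_m$ enters $[\Box^{(q)}_b,\eta_m]$ only through $Z_j,\overline Z_j$, hence always with such an $O(|z|)$ companion, and $m\,O(|z|)^2=O(\log^2m)$), one obtains, term by term, a pointwise estimate $|[\Box^{(q)}_b,\eta_m]\beta_m|\le C(\log m)^{N}\,|\beta_m|$ on $S_m$ and $0$ off $S_m$, for a universal $N$ and a constant $C$ independent of $m$ and of $p$ — the uniformity in $p$ because, by compactness of $X$, the canonical patch can be taken of fixed size and the relevant $C^\infty$-norms of $\varphi$ and of the metric, as well as $\sup_{p\in X}\sup_j|\lambda_j(p)|$, are all bounded.

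Finally I would integrate over $D$. As $|\beta_m(z,\theta)|$ does not depend on $\theta$ and $\|\beta_m\|^2_{L^2(D)}=2\pi\int_{\tilde D}m^{n-1}|u(\sqrt mz)|^2e^{-2m\varphi(z)}\chi^2(\tfrac{\sqrt mz}{\log m})\lambda(z)\,dv(z)$ is bounded uniformly in $m$ and $p$ (exactly the computation behind \eqref{c9}, using the explicit Gaussian form of $u$ and $\sup_{p}\sup_j|\lambda_j(p)|<\infty$), restricting to $S_m$ costs the $\theta$-measure of $S_m$, so $\int_{S_m}|\beta_m|^2\,dv_X=O(1/\log^2m)$. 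Hence
\[
\|\varepsilon_m\|^2=\frac1{m^2}\int_{S_m}\bigl|[\Box^{(q)}_b,\eta_m]\beta_m\bigr|^2dv_X\le\frac{C(\log m)^{2N}}{m^2}\int_{S_m}|\beta_m|^2dv_X\le\frac{C'(\log m)^{2N-2}}{m^2},
\]
and $\delta_m:=\sqrt{C'}\,(\log m)^{N-1}/m\to0$ works, independently of $p$, proving the lemma. I expect the main obstacle to be the bookkeeping of the middle paragraph: one must verify, term by term in $[\Box^{(q)}_b,\eta_m]\beta_m$, that every factor of $m$ produced either by a $T$-derivative of $e^{im\theta}$ or by a derivative of the concentrated factor $u(\sqrt mz)e^{-m\varphi}$ is absorbed by an accompanying factor $O(|z|)\le O(\log m/\sqrt m)$ coming from the off-diagonal (Levi) part of $\Box^{(q)}_b$ or from $\overline\partial_b\eta_m$, so that only a fixed power of $\log m$ survives — which is then killed by the prefactor $1/m^2$ together with the shrinking support $S_m$.
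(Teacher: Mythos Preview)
Your proof is correct and follows essentially the same approach as the paper: both identify $\varepsilon_m$ as the ``commutator'' terms arising when $\Box^{(q)}_b$ hits the $\theta$-cutoff $\eta_m$, and both exploit that in canonical coordinates the $\partial_\theta$-pieces of the principal part of $\Box^{(q)}_b$ carry $O(|z|)=O(\log m/\sqrt m)$ coefficients, so that the powers of $m$ produced by $T\beta_m=im\beta_m$ and by derivatives of the rescaled Gaussian are cancelled, leaving only powers of $\log m$. The paper carries this out via the explicit expression $\Box^{(q)}_b=\sum_j\overline U_j^{\,*}\overline U_j+\sum_{j,k}e^j\wedge(e^k\wedge)^*\,[\overline U_j,\overline U_k^{\,*}]+\varepsilon(\overline U)+\varepsilon(\overline U^{\,*})+\text{(zeroth order)}$ and a direct Leibniz expansion, arriving at the same $\varepsilon_m=O((\log m)^\alpha/m^\beta)$; your commutator formulation $[\Box^{(q)}_b,\eta_m]\beta_m$ packages the same terms more compactly, and your additional use of the shrinking $\theta$-support $S_m$ is a harmless refinement the paper does not need. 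One small caveat: your pointwise inequality $|[\Box^{(q)}_b,\eta_m]\beta_m|\le C(\log m)^N|\beta_m|$ is not literally valid on the set where $\chi=0$ but $\nabla\chi\neq0$ (since $\beta_m$ vanishes there while its derivative need not); the correct bound replaces $|\beta_m|$ by $m^{(n-1)/2}|u(\sqrt m z)e^{-m\varphi}|\,\mathbf 1_{\operatorname{supp}\chi(\sqrt m\cdot/\log m)}$, whose $L^2$-norm over $\tilde D$ is controlled by the very computation you cite from \eqref{c9}, so the conclusion is unaffected.
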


\begin{proof}
Let $\{e^j\}_{j=1}^{n-1}$ be the orthonormal frame of $T^{\ast 0, 1}X$ over $D$ given in Lemma~\ref{b5}. Let $\{\overline U_j\}_{j=1}^{n-1}$ be the dual frame of $\{e^j\}_{j=1}^{n-1}$ with respect to the given $T$-rigid Hermitian metric on $\mathbb CTX.$ Then on $D$
\begin{equation}
\begin{split}
\overline U_j=\frac{\partial}{\partial\overline z_j}-i\lambda_jz_j\frac{\partial}{\partial\theta}+O(|z|^2)
\frac{\partial}{\partial\theta}, j=1, \ldots, n-1.
\end{split}
\end{equation}
By a direct calculation(see Proposition 2.3 in \cite{H08})
\begin{equation}
\Box^{(q)}_b=\sum_{j=1}^{n-1}\overline U_j^\ast\overline U_j+\sum_{j, k=1}^{n-1}e^j\wedge(e^k\wedge)^\ast \circ[\overline U_j, \overline
U_k^\ast]+\varepsilon(\overline U)+\varepsilon (\overline
U^\ast)+~\text{zero order terms},
\end{equation}
where $U_j^\ast$ is the formal adjoint of $U_j$, $\varepsilon(\overline U)$ denotes the remainder terms of the form $\sum\limits_{k=1}^{n-1}a_k(z, \theta)\overline U_k$ with $a_k$ smooth and similarly for $\varepsilon (\overline U^\ast)$. Then by a direct calculation we have
\begin{equation}\label{g7}
\begin{split}
\frac{1}{m}\Box^{(q)}_bu_m&=\frac{1}{m}m^{\frac{n-1}{2}}\Box^{(q)}_b\left[u(\sqrt mz)e^{-m\varphi(z)}\chi\left(\frac{\sqrt mz}{\log m}\right)e^{im\theta}\right]\eta_m(\theta)\\
&+\frac{1}{m}\left(\varepsilon(\overline U^\ast)u_m(z, \theta)\right)\eta_m^\prime(\theta)O(|z|)+\frac{1}{m}
\left(\varepsilon(\overline U)u_m(z, \theta)\right)\eta_m^\prime(\theta)O(|z|)\\
&+\frac{1}{m}u_m(z, \theta)\left[\eta_m^\prime(\theta)O(1)+\eta_m^\prime(\theta)O(|z|)+\eta^{\prime\prime}_m(\theta)O(|z|^2)
\right].\\
&=\frac{1}{m}m^{\frac{n-1}{2}}\Box^{(q)}_b\left[u(\sqrt mz)e^{-m\varphi(z)}\chi\left(\frac{\sqrt mz}{\log m}\right)e^{im\theta}\right]\eta_m(\theta)+\varepsilon_m
\end{split}
\end{equation}
Here, we have used $\varepsilon_m$ to denote the remaining terms of \eqref{g7}. Then by the construction of $\eta_m$ we can check that $\varepsilon_m=O(\frac{(\log m)^{\alpha}}{m^{\beta}})$ where $\alpha, \beta$ are positive constants. Thus the lemma follows.
\end{proof}
Now we are going to prove the third part of  Proposition~\ref{nn}, we only prove it when $k=1$ and the other cases are similar.
From Lemma~\ref{b1} we have
\begin{equation}
\Box^{(q)}_b\left[u(\sqrt mz)\chi\left(\frac{\sqrt mz}{\log m}\right)e^{-m\varphi(z)}e^{im\theta}\right]=e^{im\theta}e^{-m\varphi}
\Box^{(q)}_{2m\varphi}
\left[u(\sqrt mz)\chi\left(\frac{\sqrt mz}{\log m}\right)\right].
\end{equation}
From \eqref{s}, \eqref{j7} and $\Box^{(q)}_{2\Phi_0}u=0$, it is straightforward to check that
\begin{equation}\label{d1}
\begin{split}
&\int_X\left|\frac{1}{m}m^{\frac{n-1}{2}}\Box^{(q)}_b\left[u(\sqrt mz)e^{-m\varphi(z)}\chi\left(\frac{\sqrt mz}{\log m}\right)e^{im\theta}\right]\eta_m(\theta)\right|^2dv_X\\
&=\int\left|\frac{1}{m}m^{\frac{n-1}{2}}e^{im\theta}\Box^{(q)}_{2m\varphi}
\left[u(\sqrt mz)\chi\left(\frac{\sqrt mz}{\log m}\right)\right]\eta_m(\theta)\right|^2e^{-2m\varphi(z)}\lambda(z)dv(z)d\theta\\
&=\int^\pi_{-\pi}|\eta_m(\theta)|^2d\theta\int\left|\frac{1}{m}m^{\frac{n-1}{2}}e^{im\theta}\Box^{(q)}_{2m\varphi}
\left[u(\sqrt mz)\chi\left(\frac{\sqrt mz}{\log m}\right)\right]\right|^2e^{-2m\varphi(z)}\lambda(z)dv(z)\\
&=\int^\pi_{-\pi}|\eta_m(\theta)|^2d\theta\int\left|\Box^{(q)}_{(m)}
\left[u(z)\chi\left(\frac{z}{\log m}\right)\right]\right|^2e^{-2mF^*_m\varphi(z)}\lambda(\frac{z}{\sqrt{m}})dv(z)\\
&\leq 2\pi\int\left|\Box^{(q)}_{(m)}\left[u(z)\chi\left(\frac{z}{\log m}\right)\right]\right|^2e^{-2mF^*_m\varphi(z)}\lambda(\frac{z}{\sqrt{m}})dv(z)\leq\delta_m,
\end{split}
\end{equation}
where $\delta_m>0$ is a sequence independent of $p$ with $\lim_{m\To\infty}\delta_m=0$.
Combining (\ref{d2}), (\ref{d1}) and notice that $\|m^{-1}\Box^{(q)}_b\alpha_m\|\leq\|m^{-1}\Box^{(q)}_bu_m\|$ we get the conclusion of the third part of this proposition.
(2) in Proposition~\ref{nn} and \eqref{d1} imply (4) in this proposition.
\end{proof}

Now we are going to prove Theorem~\ref{o}. The proof of \eqref{e-gue150619d} is essentially the same as the proof of \eqref{e-gue150619dI}. Therefore we omit the detail. Let $\alpha_m$ be the sequence we have chosen in Proposition~\ref{nn}. Then $\alpha_m=\alpha_{m, 1}+\alpha_{m, 2}, \alpha_{m, 1}\in\mathcal H^q_{b, m, \leq mv_m}(X), \alpha_{m, 2}\in\overline{\mathcal H^q_{b, m, >mv_m}(X)}.$ Since
\begin{equation}\label{e-gue150620fb}
\|\alpha_{m, 2}\|^2=(\alpha_{m, 2}|\alpha_{m, 2})\leq\frac{1}{mv_m}\left(\Box_{b, m}^{(q)}\alpha_{m, 2}\big|\alpha_{m, 2}\right)
=\frac{1}{v_m}\left(\frac{1}{m}\Box^{(q)}_{b, m}\alpha_m\Big|u_{m, 2}\right)\leq\frac{\delta_m}{v_m}\rightarrow0.
\end{equation}
From \eqref{e-gue150620fb} and (2) in Proposition~\ref{nn}, we get
\begin{equation}
\lim_{m\rightarrow\infty}\|\alpha_{m, 1}\|=1.
\end{equation}
Now we claim that
\begin{equation}\label{e5}
\lim_{m\rightarrow\infty}m^{-(n-1)}|\alpha_{m, 2}(p)|^2=0.
\end{equation}
On $D$, we write $ \alpha_{m, 2}(z, \theta)=\tilde\alpha_{m, 2}(z)e^{im\theta}.$ Set $\beta_{m, 2}(z)=\tilde \alpha_{m, 2}(z)e^{m\varphi(z)}$. Then
\begin{equation}\label{d4}
\begin{split}
&\lim_{m\rightarrow\infty}m^{-(n-1)}|\alpha_{m, 2}(p)|^2=\lim_{m\rightarrow\infty}m^{-(n-1)}|\tilde\alpha_{m ,2}(0)|^2\\
=&\lim_{m\rightarrow\infty}m^{-(n-1)}|\beta_{m, 2}(0)|^2
=\lim_{m\rightarrow\infty}|\beta_{(m), 2}(0)|^2.
\end{split}
\end{equation}
Here we used the notation $|\beta_{(m), 2}(z)|^2=m^{-(n-1)}|\beta_{m, 2}(\frac{z}{\sqrt m})|^2.$

From Lemma~\ref{b1} we have
\begin{equation}\label{d3}
\Box^{(q)}_{b, m}\left(\alpha_{m, 2}\right)=e^{im\theta}e^{-m\varphi(z)}\Box^{(q)}_{2m\varphi}
\left(\tilde\alpha_{m, 2}(z)e^{m\varphi(z)}\right)
=e^{im\theta}e^{-m\varphi(z)}\Box^{(q)}_{2m\varphi}(\beta_{m, 2}).
\end{equation}
From (\ref{d3}) and using induction, we get on $D$
\begin{equation}\label{d6}
(\Box_b^{(q)})^k\alpha_{m, 2}=e^{im\theta}e^{-m\varphi}(\Box^{(q)}_{2m\varphi})^k(\beta_{m, 2}(z)).
\end{equation}
By Garding's inequality (see Proposition~\ref{k2}) and Sobolev embedding theorem, we see that
\begin{equation}\label{d8}
|\beta_{(m), 2}(0)|^2\leq C_{n, r}\left(\|\beta_{(m),2}\|^2_{2mF^*_m\varphi, D_r}+\|\Box^{(q)}_{(m)}\beta_{(m), 2}\|_{2mF^*_m\varphi, n, D_{r}}^2\right)
\end{equation}
for some $r>0$. Here $C_{n, r}$ is a constant independent of $p$ and $m$.
Now, we have
\begin{equation}\label{d9}
\|\beta_{(m),2}\|^2_{2mF^*_m\varphi, D_r}\leq\|\alpha_{m, 2}\|^2\rightarrow0.
\end{equation}
Moreover, from Garding's inequality and using induction (see Proposition~\ref{k2}), we have
\begin{equation}\label{e1}
\|\Box^{(q)}_{(m)}\beta_{(m), 2}\|_{2mF^*_m\varphi, n, D_{r}}^2\leq C^\prime\sum_{k=1}^{n+1}\left\|\left(\Box^{(q)}_{(m)}\right)^k\beta_{(m), 2}\right\|_{2mF^*_m\varphi, D_{r^\prime}}^2,
\end{equation}
for some $r^\prime>0$, where $C^\prime>0$ is a constant independent of $m$. From (\ref{d6}) and \eqref{s} we can check that for $k\in\mathbb N$,
\begin{equation}\label{e2}
\begin{split}
\left\|\left(\Box^{(q)}_{(m)}\right)^k\beta_{(m), 2}\right\|_{2mF^*_m\varphi, D_{r^\prime}}^2
&\leq C_1\left\|\frac{1}{m^k}\left(\Box^{(q)}_{b}\right)^k\alpha_{m, 2}\right\|^2\\
&\leq C_1\left\|\frac{1}{m^k}\left(\Box^{(q)}_{b}\right)^k\alpha_{m}\right\|^2\To0,
\end{split}
\end{equation}
where $C_1>0$ is a constant independent of $m$.
Combining (\ref{d8}), (\ref{d9}), (\ref{e1}) with (\ref{e2}), we have
$
\lim_{m\rightarrow\infty}|\beta_{(m), 2}(0)|^2=0.
$
From (\ref{d4}) we have
$
\lim_{m\rightarrow\infty}m^{-(n-1)}|\alpha_{m, 2}(p)|^2=0
$
and the claim (\ref{e5}) follows. From \eqref{e5} and (1) in Proposition~\ref{nn}, we conclude that
\begin{equation}
\lim_{m\rightarrow\infty}m^{-(n-1)}|\alpha_{m, 1}(p)|^2=\frac{|\lambda_1(p)\cdots\lambda_{n-1}(p)|}{2\pi^n}.
\end{equation}
Now,
\begin{equation}\label{e6}
m^{-(n-1)}\Pi^q_{m, \leq mv_m}(0)\geq m^{-(n-1)}\frac{|\alpha_{m, 1}(p)|^2}{\|\alpha_{m, 1}\|^2}\rightarrow\frac{|\lambda_1(p)\cdots\lambda_{n-1}(p)|}{2\pi^n}.
\end{equation}
By a similar proof of (\ref{k6}), we have
\begin{equation}\label{e7}
\limsup_{m\rightarrow\infty}m^{-(n-1)}\Pi^q_{m, \leq mv_m}(p)\leq \frac{|\lambda_1(p)\cdots\lambda_{n-1}(p)|}{2\pi^n}.
\end{equation}
Combining (\ref{e6}) with (\ref{e7}), we get the conclusion
of Theorem~\ref{o}.
\section{Appendix}
\subsection{Proof of Lemma~\ref{g2}}
\begin{proof}
Set $X_j=\{x\in X: e^{i\frac{2\pi}{j}}\circ x=x~\text{and}~\forall~ 0<|\theta|<\frac{2\pi}{j}, e^{i\theta}x\neq x\}.$ We call such $x$ the points in $X$ with period $\frac{2\pi}{j}.$ Then $X_{\rm reg}=X_1$ by definition. There are only finite $X_j, 1\leq j\leq m$ such that $X=\bigcup_{j=1}^m X_j.$ Then $X_j\cap X_k=\emptyset, \forall j\neq k$.
Now we are going to show that $\bigcup_{j=2}^mX_j$ is a closed subset of $X$. We assume there exists a sequence $\{x_k\}\subset\bigcup_{j=2}^mX_j$ such that $x_k\rightarrow x_0$. W.L.O.G, we assume that the $\{x_k\}\subset X_j$ for some $j\geq 2$. Then we have $e^{i\frac{2\pi}{j}}\circ x_k=x_k$. Taking limits as $k\rightarrow\infty$ we have $e^{i\frac{2\pi}{j}}\circ x_0=x_0$. By definition, $x_0\not\in X_{\rm reg}$. Thus $x_0\in \bigcup_{j=2}^mX_j$. This means that $\bigcup_{j=2}^mX_j$ is a closed subset of $X$ and the complement $X_{\rm reg}$ is an open subset of $X$.

Second, we are going to check that the measure of $X\setminus X_{\rm reg}$ is zero. Set $Y_j=\{x\in X: e^{i\frac{2\pi}{j}}\circ x=x\}, 2\leq j\leq m$. Obviously that $Y_j$ is a closed subset of $X$ and $X_j\subset Y_j, 2\leq j\leq m$. Now we will show that the measure of $Y_j, 2\leq j\leq m$ is zero and for convenient we only show that the measure of $Y_2$ is zero. We use $m(Y_j)$ to denote the measure of $Y_j$ for $2\leq j\leq m$. If $Y_2=\emptyset$, we have $m(Y_2)=0$. Now we assume that $Y_2\neq\emptyset.$ For any $p\in Y_2$, we have $e^{i\pi}\circ p=p$. With the rigid Hermitian metric on $X$, it is easy to check that the map $e^{i\pi}: X\rightarrow X$ is an isometrically CR isomorphism. Since $e^{i\pi}\circ p=p$ we have $de^{i\pi}: T_pX\rightarrow T_pX.$ Here $T_pX$ is the tangent space of $X$ at $p$. There exists a small neighborhood $U_{o_p}$ of $o_p\in T_pX$ such that the exponential map
\begin{equation}\label{h3}
\exp_p: U_{o_p}\rightarrow \exp_p(U_{o_p}):=V_p\subset X\end{equation}
is a diffeomorphism. Then for any $q\in Y_2\cap V_p$, there exists a vector $Z_q\in U_{o_p}$ such that $\exp(Z_q)=q$. Since $e^{i\pi}\circ q=q$, we have that $e^{i\pi}(\exp_p(Z_q))=q=\exp_p(Z_q)$. The isometric map $e^{i\pi}: X\rightarrow X$ implies the commutation between $e^{i\pi}$ and the exponential map and we have that
\begin{equation}\label{h4}
\exp_p\circ de^{i\pi}(Z_q)=e^{i\pi}\circ\exp_p(Z_q)=q=\exp_p(Z_q).
\end{equation}
Since $\|de^{i\pi}(Z_q)\|=\|Z_q\|$, we have that $de^{i\pi}(Z_q)\in U_{o_p}$. Combining with (\ref{h3}), we get $de^{i\pi}(Z_q)=Z_q$. This means  that $Z_q$ is a fixed point of the linear map $de^{i\pi}: T_p X\rightarrow T_pX.$ Set $H=\{Z\in T_p X: de^{i\pi}Z=Z\}$. By (\ref{h3}) and (\ref{h4}) we have that
\begin{equation}\label{h5}
\exp_p(U_{o_p}\cap H)=V_p\cap Y_2.
\end{equation}
Since $Y_2$ is a closed subset of $X$, From (\ref{h5}) we have that $H$ must be a proper linear subspace of $T_pX$. Then (\ref{h5}) implies that $m(Y_2)=0$. Similarly, we have $m(Y_j)=0, \forall 2\leq j\leq m$. From $X_j\subset Y_j, 2\leq j\leq m$, we have that $m(X_j)=0, 2\leq j\leq m.$
Moreover (\ref{h5}) implies that $Y_2$ is a nowhere dense subset of $X$, similarly, $Y_j, 2\leq j\leq m$ are nowhere dense subset of $X$. Since $X_j\subset Y_j, 2\leq j\leq m$, we have that $X_{\rm reg}$ is a dense subset of $X$.
\end{proof}

\begin{center}
{\bf Acknowledgement}
\end{center}

The authors would like to express their gratitude to Professor Xiaochun Rong for the helpful communications on group actions and  Hendrik Herrmann for useful discussion in this work. The authors are grateful to Professor Marinescu for comments and
useful suggestions on an early draft of the manuscript. We also thank the referee for many detailed remarks that have helped to improve the presentation.

\bibliographystyle{amsalpha}

\end{document}